\newtheorem{theorem}{Theorem}[section]
\newtheorem{proposition}[theorem]{Proposition}
\newtheorem{definition}[theorem]{Definition}
\newtheorem{corollary}[theorem]{Corollary}
\newtheorem{lemma}[theorem]{Lemma}
\numberwithin{equation}{section}
\theoremstyle{remark}
\newtheorem{remark}[theorem]{Remark}
\newtheorem{example}[theorem]{\bf Example}
\newcommand{\R}{\mathbb{R}}
\newcommand{\C}{\mathbb{C}}
\newcommand{\D}{\mathbb{D}}
\newcommand{\E}{\mathbb{E}}
\newcommand{\FC}{\mathcal{C}}
\begin{document}

\title[Willmore surfaces in spheres via loop groups]{\bf{Willmore surfaces in spheres via loop groups  $I$:
generic cases and some examples}}
\author{Josef F. Dorfmeister, Peng Wang }

\date{}
\maketitle

\begin{center}
{\bf Abstract}
\end{center}

 {\small
In this paper we deal with the global properties of Willmore surfaces in spheres via the harmonic conformal Gauss map using loop groups.

We first derive a  global description of those harmonic maps which can be realized as  conformal Gauss maps of some Willmore surfaces (Theorem \ref{normalizationlemma},  Theorem \ref{th-Willmore-harmonic-U} and Theorem \ref{th-Willmore-harmonic}).

Then we introduce the DPW procedure for these harmonic maps, and state appropriate versions of the Iwasawa decomposition and the Birkhoff decomposition Theorems. In particular, we show how the harmonic maps associated with Willmore surfaces can be constructed in terms of loop groups.

The third main result, which has many implications for the case of Willmore surfaces in spheres, shows that every  harmonic map into some non-compact inner symmetric space $G/K$ induces a harmonic map into the compact dual inner symmetric space $U/{(U \cap  K^\mathbb{C})}$.
From this correspondence we obtain additional information about the global properties of harmonic maps into non-compact inner symmetric spaces.

As an illustration of the theory developed in this paper we list examples (some of which were worked out in separate papers by following the theory of the present paper). In particular, we present an explicit, unbranched (isotropic) Willmore sphere in $S^6$ which is not S-Willmore, and thus does not have a dual Willmore surface. This example gives a negative answer to a long open problem (originally posed by Ejiri).\\
}

\vspace{0.5mm}  {\bf \ \ ~~Keywords:}  Willmore surface;  conformal Gauss map; normalized potential;  non-compact symmetric space; Iwasawa decomposition.     \vspace{2mm}

{\bf\   ~~ MSC(2010): \hspace{2mm} 53A30, 53C30, 53C35}

\tableofcontents

\section{Introduction}

\subsection{Motivation for and outline of our work}
A Willmore surface in $S^{n+2}$ is a critical surface of the Willmore functional
$\int_M(|\vec{H}|^2-K+1)dM$
with $\vec{H}$  and $K$ being the mean curvature vector and the Gauss curvature respectively. It is well-known that the Willmore functional is invariant under conformal transformations of $S^{n+2}$. Moreover, by Blaschke \cite{Blaschke}, Bryant \cite{Bryant1988}, Ejiri \cite{Ejiri1988}, and Rigoli \cite{Rigoli1987}, a conformal immersion is Willmore if and only if its conformal Gauss map is harmonic.

Later, H\'{e}lein's important observation \cite{Helein,Helein2}, generalized by Xia-Shen \cite{Xia-Shen} and also developed  in a different direction by Xiang Ma \cite{Ma2006}, indicates that Willmore surfaces are also related to other  harmonic maps (into inner  symmetric spaces). Moreover, a loop group theory was built for this kind of harmonic maps in \cite{Helein,Xia-Shen}. However, this new type of harmonic maps may have singularities, which makes it very unclear how to derive global properties of Willmore surfaces this way.

Therefore, in this paper we will use the conformal Gauss map to study the global geometry of Willmore surfaces. For this purpose, one has two things to do.
\begin{enumerate}
\item To characterize the harmonic maps which can be the conformal Gauss map of a Willmore surface, and to devise a procedure which produces from such harmonic maps  a Willmore surface.
\item To derive a DPW type construction for Willmore surfaces. This entails an appropriate statement of the Birkhoff and   the Iwasawa decomposition Theorem for the corresponding loop groups.
\end{enumerate}

After these preparations, it is natural to consider more specific examples of Willmore surfaces in spheres. It turns out to particularly important to discuss Willmore surfaces of finite uniton type ( see below and Section 5.2 for a more extensive discussion). Now there exists a detailed study of harmonic maps of finite uniton type into compact inner symmetric spaces \cite{BuGu}.
But the symmetric target space of the conformal Gauss map is not compact.
In order to be able to apply the work \cite{BuGu} it is therefore necessary to associate to a harmonic map into a non-compact inner symmetric space a harmonic map into a compact inner symmetric space. This is our third main result.

 In particular, using a result of Uhlenbeck we can show this way  that all Willmore spheres are of finite uniton type.
Moreover, after Ejiri's work \cite{Ejiri1988} the classification of Willmore $2-$spheres in $S^{n+2}$, $n>2$,  stays an open problem.
We also would like to remark that for minimal $2-$spheres as special Willmore $2-$spheres,  recently in \cite{Fernandez} Fern\'{a}ndez provided a characterization of the module space of minimal $2-$spheres  with fixed energy in  $S^{n+2}$. 
 Therefore it is natural to give a priority to the discussion of Willmore 2-spheres in $S^{n+2}$.  In this spirit Willmore 2-spheres in $S^{n+2}$ have been investigated in \cite{Wang-1,Wang-iso}.
 Moreover, in \cite{Wang-1} we derive a coarse classification of Willmore 2-spheres via normalized potentials (a loop group tool).  As a concrete example (worked out in \cite{Wang-iso} by following the method explained in this paper) we state explicitly a new Willmore 2-sphere in $S^6$, giving a counterexample to Ejiri's expectation.
\vspace{3mm}



 Clearly, the next  class of surfaces to be considered are Willmore surfaces of genus $g =1$.  Willmore tori can be viewed as doubly periodic Willmore planes, i.e., Willmore planes with special symmetries. Also other higher genus Willmore surfaces can be viewed as Willmore surfaces with symmetries. So it is  natural to consider independently Willmore surfaces with symmetries. This leads to other applications of our method \cite{DoWa-sym1,DoWa-sym2}. Along this way, equivariant Willmore surfaces (Willmore surfaces with a one-parameter group of symmetries) and homogeneous Willmore surfaces will also be investigated.  While we can so far produce some examples of Willmore tori following our theoretical description, we expect that our work will eventually make it possible to generalize Bohle's result \cite{Bo} and to classify all Willmore tori in $S^{n+2}$.

In addition to the construction of (maybe branched) immersions of Riemann surfaces, equally well non-orientable surfaces can be constructed.
The basic theory for this is in   \cite{DoWa-sym2}.
 Following this theory, we have constructed (see loc.cit.) Willmore  immersions  of Moebius bands and of $\R P^2$. The case of Willmore Klein bottles we plan to investigate in the future.

\subsection{Main results of this paper}
To state the main results, let us first recall briefly some basic ideas of Willmore surfaces and the DPW construction.

\subsubsection{Willmore surfaces and the DPW construction}

A surface $y$ in $S^{n+2}$  has a mean curvature 2-sphere at every point. The conformal Gauss map $Gr$ maps each point to the oriented mean curvature 2-sphere at this point. Since in conformal geometry an oriented mean curvature 2-sphere of $S^{n+2}$  is identified with an oriented 4-dimensional Lorentzian subspace of the oriented $n+4$ dimensional Lorentz-Minkowski space $\R^{n+4}_1$, $Gr$ is a map into $Gr_{3,1}(\R^{n+4}_{1})=SO^+(1,n+3)/SO^+(1,3)\times SO(n)$.
Here by $SO^+(1,n+3)$ we denote the connected component of the special linear isometry group of $\mathbb{R}^{n+4}_1$
which contains the identity element. Here $``\ ^+"$ comes from  the fact that  $SO^+(1,n+3)$ preserves
the forward light cone.
 The Lie algebra of $SO^+(1,n+3)$ is
\begin{equation}\label{eq-so(1,n+3)}
\mathfrak{so}(1,n+3)=\mathfrak{g}=\{X\in gl(n+4,\mathbb{R})|X^tI_{1,n+3}+I_{1,n+3}X=0\}, ~\hbox{ with }~I_{1,n+3}=\hbox{diag}(-1,1,\cdots,1).
\end{equation}

The DPW construction for harmonic maps into a symmetric space $G/K$ is to relate these harmonic maps to some special meromorphic or holomorphic 1-forms, via two loop group decompositions, the Birkhoff decomposition and Iwasawa decomposition. Roughly speaking, we have the following mutually inverse procedures \cite{DPW,Wu} (assuming we fix the values of all frames at a base point).
\begin{enumerate}
\item  For a harmonic map $f$ into $G/K$, the Maurer-Cartan form of a lift of $f$ admits an $S^1-$symmetry, which allows one to introduce a loop parameter $\lambda\in S^1$. By integration one obtains {\em an extended frame} depending on $\lambda$. The Birkhoff decomposition of the extended frame w.r.t. $\lambda$ gives a meromorphic frame and the M-C form of this frame provides a special (loop algebra valued) meromorphic 1-form, called the {\em normalized potential} of the harmonic map.
\item Conversely, we start from such a special meromorphic 1-form (normalized potential), requiring that the ode with the normalized potential as coefficient matrix yields  a meromorphic frame. The Iwasawa decomposition of this frame provides an extended frame which, after projection to $G/K$, gives the desired harmonic map.
\end{enumerate}

 It is worthwhile pointing out that for the construction of conformally harmonic maps of finite uniton type  the last step is particularly easy, at least in principle, since in this case the normalized potential takes value in a fixed nilpotent Lie algebra, so that the ode solution and the loop group decompositions become much simpler. We will show that all conformally harmonic maps defined on $S^2$ are of finite uniton type and we will discuss this type of harmonic maps in more detail in \cite{DoWa2}.

In our case we have $G/K=SO^+(1,n+3)/SO^+(1,3)\times SO(n)$.
As a consequence, for the DPW construction, a basic thing we need to do is to derive the Birkhoff Decomposition Theorem and the Iwasawa Decomposition Theorem  for the natural groups describing the  symmetric space used in this paper (Section 4 and Appendix).
Note, in the literature these decomposition theorems are proven for simply connected Lie groups. For the construction of Willmore surfaces we need to use $G = SO^+(1,n+3)$, which is not simply connected. Therefore our decomposition theorems are slightly different from the ones that can be  found in the literature.

\subsubsection{Statement of the main results}

Since our overall goal is to provide a theory which permits to construct
in a concrete way as many Willmore surfaces as possible (with all kinds of fundamental groups) in high dimensional spheres, the primary question is, whether the potentials of the loop group theory can be chosen in a way so that one obtains Willmore surfaces of the required type.

Our first main result is the characterization of all harmonic maps which are the conformal Gauss map of some Willmore surface. To this end, we need to
 provide two procedures:
 \begin{enumerate}
\item  {\it From a Willmore surface to its conformally  harmonic  Gauss map (the simple direction).}

We first derive a description of the Maurer-Cartan form $\alpha$ of a natural frame associated with the conformal Gauss map of a conformal surface in $S^{n+2}$. This yields a very specific structure of the matrices occurring (see Proposition \ref{frame}). Observing that the conformal Gauss map of a Willmore surface is harmonic we show (see Corollary \ref{nilpotency}) that the submatrix
$B_1$  of $\alpha$ satisfies the nilpotency condition $B_1^t I_{1,3} B_1 = 0$, where $I_{1,3} = diag(-1,1,...,1)$.

\item
{\it From a harmonic map to a Willmore surface, where it is possible (the difficult direction).}

 In Theorem \ref{lemma-B-12} we show that the nilpotency condition for the submatrix $B_1$ of the Maurer-Cartan form of some harmonic map  is sufficient to assure that this Maurer-Cartan form
can be normalized -- without changing the harmonic map -- such that the new submatrix matrix $B_1$ has exactly the form  the Maurer-Cartan form of the natural frame of the conformal Gauss map of some conformal map into $S^{n+2}$.  Now the question is when such a conformally harmonic map is the conformal Gauss map of some conformal immersion (which is equivalent to being the conformal Gauss map of some Willmore immersion).
 It turns out, see Theorem \ref{th-Willmore-harmonic-U}, that locally  the non-vanishing and the vanishing of the simple function $h_A = a_{13} + a_{23}$, where $a_{ij}$  are the corresponding coefficients of the matrix $A_1$ in  \eqref {eq-B0}, divides the conformally harmonic maps (with nilpotency condition for $B_1$) into  two groups, namely those which do not contain a constant light-like vector and those which do contain a constant lightlike vector respectively.
Harmonic maps in the first group always come from some conformal map, while those in the second group not always correspond to conformal maps, but if they do, then the corresponding surface is conformal to a minimal surface in $\R^{n+2}$ and exactly all minimal surfaces in $\R^{n+2}$ are in this group.
 In Theorem \ref{th-potential-light}, we quote a result of
\cite{Wang-Min} which shows that all these harmonic maps correspond exactly to ``normalized potentials'' of a very specific special form. Therefore, if one wants to construct Willmore surfaces not conformally equivalent to minimal surfaces in  $\R^{n+2}$, one can do this by avoiding those normalized potentials of a very special form (see \eqref{eq-w-minimal}).
 Altogether one obtains a procedure which permits to construct all Willmore surfaces in $S^{n+2}$, which are not conformally equivalent to minimal surfaces in  $\R^{n+2}$, in a, at least theoretically, satisfactory way from normalized potentials.

In Theorem \ref{th-Willmore-harmonic} we show that the local existence of a Willmore surface describing a given conformally harmonic map
already implies a unique global existence.
 \end{enumerate}

Our second main result is a loop group description of the harmonic maps which correspond to Willmore surfaces. This includes two decomposition theorems (Iwasawa and Birkhoff) for the corresponding loop groups, the DPW procedure for our harmonic maps, as well as a (generalized) DPW construction for harmonic two-spheres (Theorem \ref{th-potential-sphere}).

The third main result  relates a harmonic map into a non-compact inner symmetric space to a harmonic map  into a compact symmetric space as follows (See Section 5 for details).
\begin{theorem}\label{thm-noncompact}
  Let $f: \tilde{M}\rightarrow \tilde{G}/\tilde{K}$  be a harmonic map from a simply connected Riemann surface $\tilde M$ into an inner, non-compact, symmetric space $\tilde{G}/\tilde{K},$  where
 $\tilde{G}$ is chosen to be simply connected.
Then there exists
a maximal compact Lie subgroup $\tilde{U}$ of
$\tilde{G}^\mathbb{C}$ satisfying $( \tilde{U} \cap \tilde{K}^{\mathbb{C}})^{\mathbb{C}}=\tilde{K}^{\mathbb{C}}$
and there exists a  harmonic map  $f_ {\tilde{U}}: \tilde{M} \rightarrow  \tilde{U}/ (\tilde{U} \cap \tilde{K}^{\mathbb{C}})$
into the compact, inner  symmetric space $\tilde{U}/ (\tilde{U} \cap \tilde{K}^{\mathbb{C}} )$ which has the same normalized potential as $f$.
The map $f_{\tilde{U}}$ is induced from $f$ via the Iwasawa decomposition of  the extended frame $F$ of $f$ relative to $\tilde{U}$.

Conversely, let $h_{\tilde{U}}:\D\rightarrow\tilde{U}/ (\tilde{U} \cap \tilde{K}^{\mathbb{C}} )$ be a harmonic map from the unit disk $\D$ with $h_{\tilde{U}}|_{z=0}=e$. Then there exists a neighbourhood $\D_0\subset \D$ of $0$ and a harmonic map $h:\D_0\rightarrow \tilde{G}/\tilde{K}$ which has the same normalized potential as $h_{\tilde{U}}$. The map $h$ is induced from $h_{\tilde{U}}$ via the Iwasawa decomposition of the extended frame $H$ of  $h_{\tilde{U}}$  relative to $\tilde{G}$.
\end{theorem}
Note that the difference in the above two directions is due the fact that the Iwasawa decomposition for the loop groups of  compact Lie groups is  global while the Iwasawa decomposition for loop groups of non-compact Lie groups is not global.

This theorem turns out to permit to apply certain well-known results for finite uniton harmonic maps into compact inner symmetric spaces \cite{BuGu} directly to harmonic maps into non-compact inner  symmetric spaces, which basically is  the  content of the follow-up paper \cite{DoWa2}.

As a consequence,  in  \cite{Wang-iso} a concrete example of a Willmore 2-sphere in $S^6$ is computed, which gives a negative answer to an open problem of Ejiri stated at  the end of his paper \cite{Ejiri1988}.
Here is the example.
\begin{theorem} \label{thm-example}(\cite{Wang-iso}) Let
\[\eta=\lambda^{-1}\left(
                      \begin{array}{cc}
                        0 & \hat{B}_1 \\
                        -\hat{B}_1^tI_{1,3} & 0 \\
                      \end{array}
                    \right)dz,\ ~ \hbox{ with } ~\ \hat{B}_1=\frac{1}{2}\left(
                     \begin{array}{cccc}
                       2iz&  -2z & -i & 1 \\
                       -2iz&  2z & -i & 1 \\
                       -2 & -2i & -z & -iz  \\
                       2i & -2 & -iz & z  \\
                     \end{array}
                   \right).\]
Then the associated family of Willmore two-spheres $y_{\lambda}$, $\lambda\in S^1$, corresponding to $\eta$, is \begin{equation}\label{example1}
\begin{split}y_{\lambda}&=\frac{1}{ \left(1+r^2+\frac{5r^4}{4}+\frac{4r^6}{9}+\frac{r^8}{36}\right)}
\left(
                          \begin{array}{c}
                            \left(1-r^2-\frac{3r^4}{4}+\frac{4r^6}{9}-\frac{r^8}{36}\right) \\
                            -i\left(z- \bar{z})(1+\frac{r^6}{9})\right) \\
                            \left(z+\bar{z})(1+\frac{r^6}{9})\right) \\
                            -i\left((\lambda^{-1}z^2-\lambda \bar{z}^2)(1-\frac{r^4}{12})\right) \\
                            \left((\lambda^{-1}z^2+\lambda \bar{z}^2)(1-\frac{r^4}{12})\right) \\
                            -i\frac{r^2}{2}(\lambda^{-1}z-\lambda \bar{z})(1+\frac{4r^2}{3}) \\
                            \frac{r^2}{2} (\lambda^{-1}z+\lambda \bar{z})(1+\frac{4r^2}{3})  \\
                          \end{array}
                        \right) \\
  \end{split}
\end{equation}
with $r=|z|$.
  Moreover, $y_{\lambda}:S^2\rightarrow S^6$ is a Willmore immersion in $S^6$, which is non S-Willmore, full, and totally isotropic. In particular, $y_\lambda$ does not have any branch points.

Note that all the surfaces $y_{\lambda}$, $\lambda\in S^1$, are isometric to each other by rotations by matrices of $SO(7)$, since $y_{\lambda}=D_{\lambda} \cdot y_1,$ with $y_1=y_{\lambda}|_{\lambda=1}$ and
\[ D_{\lambda}=\left(
    \begin{array}{ccccccc}
      I_3 &   0 & 0 & 0 & 0 \\
      0 &   \frac{\lambda+\lambda^{-1}}{2} &  \frac{\lambda-\lambda^{-1}}{-2i} & 0 & 0 \\
      0 &   \frac{\lambda-\lambda^{-1}}{2i} &  \frac{\lambda+\lambda^{-1}}{2} & 0 & 0 \\
      0 &  0 & 0& \frac{\lambda+\lambda^{-1}}{2} &  \frac{\lambda-\lambda^{-1}}{-2i} \\
      0 &   0 & 0&  \frac{\lambda-\lambda^{-1}}{2i} &  \frac{\lambda+\lambda^{-1}}{2} \\
    \end{array}
  \right)\in SO(7).\]
\end{theorem}
For the meaning of $\eta$ and $\hat B_1$ we refer to Section 3 and Section 4 below. It is a straightforward computation to verify that $y$ in \eqref{example1} has the desired properties. We refer to \cite{Wang-iso} for detailed proofs and further discussions. This result and the characterization of the normalized potentials of all Willmore 2-spheres in $S^{n+2}$ indicates that our approach is workable for the study of global geometry of Willmore surfaces in terms of the DPW method for their conformal Gauss map.

Finally we would like to point out that by permitting extended frames to have (up to two) singular points  it is possible to apply the usual loop group formalism (basically without any changes) to the case  of
harmonic maps from $S^2$ to any inner symmetric space (see Theorem \ref{th-potential-sphere}). Hence the results of this paper (unless explicitly excluded)  also make sense for the case $\tilde{M} = S^2$.

\subsection{Organization of the paper}

This paper is organized as follows: in Section 2 we recall the moving frame treatment of Willmore surfaces, following the method of \cite{BPP}, relating a Willmore surface to its conformal Gauss map. We also briefly compare our treatment with H\'{e}lein's framework \cite{Helein}.

Then we introduce the basic facts about harmonic maps and apply them, in Section 3,  to describe the conformal Gauss maps.
To be concrete, we describe a global correspondence
between Willmore surfaces and conformally   harmonic maps of a special type (See Theorem \ref{th-Willmore-harmonic-U} and Theorem \ref{th-Willmore-harmonic}).

 In Section 4, we first recall the DPW method for harmonic maps into symmetric spaces. The Birkhoff and Iwasawa Decomposition Theorems concerning our non-compact groups are presented  in Section 4.1. And the existence of normalized potentials for harmonic two-spheres is provided as well (Theorem \ref{th-potential-sphere}).
It is important to note here that the normalized potential and the meromorphic frame also make sense globally for Willmore 2-spheres, although the extended frame (in this case and only in this case) requires to admit some singularities globally.

 In Section 5 we show that harmonic maps into a non-compact  inner symmetric space induce  harmonic maps into their compact dual inner symmetric space (Theorem \ref{thm-noncompact}).

Section 6 is devoted to some applications of our main results.
 This includes a description of the potentials corresponding to Willmore surfaces which are conformally equivalent to minimal surfaces in $\mathbb{R}^{n+2}$. We also show in this section that isotropic Willmore surfaces in $S^4$ are Willmore surfaces of finite uniton type. Moreover, homogeneous Willmore surfaces admitting a transitive abelian group action are characterized and some examples of homogeneous Willmore tori  are discussed here.

In Appendix A we firstly show that there are two open ``big Iwasawa cells"  for the twisted loop groups used in this paper. Secondly we show that, in our case, in the complexified stabilizer group $K^{\C}$
there exists a solvable subgroup $S$ such that the group multiplication map
$ K \times S \rightarrow K\cdot S $ is a diffeomorphism  onto an open subset of  $K^\C$.
Finally, Appendix B ends this paper with a proof of  Theorem \ref{normalizationlemma}.

\section{Willmore surfaces in $S^{n+2}$}

In \cite{BPP}, a natural and simple treatment of the conformal geometry of surfaces in $S^{n+2}$ is presented.
Here  we will use the same set-up and give a description of a conformal surface in $S^{n+2}$ by using the Maurer-Cartan form of some lift.
We will review first the projective light cone model of the conformal geometry of
$S^{n+2}$ and derive the surface theory in this model. In view of our goal to describe Willmore surfaces, we reformulate the equations on the Lie algebra level and apply it to the well-known description of Willmore surfaces.


\subsection{Conformal surface theory in the projective light cone model}

Let $\mathbb{R}^{n+4}_1$ denote Minkowski space, i.e. we consider $\mathbb{R}^{n+4}$ equipped with the Lorentzian metric
$$<x,y>=-x_{0}y_0+\sum_{j=1}^{n+3}x_jy_j=x^t I_{1,n+3} y,\ \hspace{3mm}  I_{1,n+3}=diag(-1,1,\cdots,1).$$
Let $\mathcal{C}_+^{n+3}= \lbrace x \in \mathbb{R}^{n+4}_{1} |<x,x>=0 , x_0 >0 \rbrace $
denote the forward light cone of $\mathbb{R}^{n+4}_{1}$.
It is easy to see that the projective light cone
$
Q^{n+2}=\{\ [x]\in\mathbb{R}P^{n+3}\ |\ x\in \mathcal{C}_+^{n+3}
\}$
with the induced conformal metric, is conformally equivalent to $S^{n+2}$.
Moreover, the conformal group of
$Q^{n+2}$ is exactly the projectivized orthogonal group $O(1,n+3)/\{\pm1\}$ of
$\mathbb{R}^{n+4}_1$, acting on $Q^{n+2}$ by
\[
T([x])=[Tx],\,\,\ T\in O(1, n+3).
\]

Let $y:M\rightarrow S^{n+2}$ be a conformal immersion from a Riemann surface $M$.
 Let $U\subset M$ be a contractible open subset. A local
lift of $y$ is a map $Y:U\rightarrow \mathcal{C}_+^{n+3} $ such
that $\pi\circ Y=y$. Two different local lifts differ by a scaling,
thus they induce the same conformal metric on $M$.
Here we call $y$ a {\em conformal} immersion, if
$\langle Y_{z},Y_{z} \rangle =0$ and
$\langle Y_{z},Y_{\bar{z}} \rangle >0$ for any local lift $Y$ and any complex
coordinate $z$ on $M$.
Noticing
$\langle Y,Y_{z\bar{z}}\rangle =-\langle Y_{z},Y_{\bar{z}}\rangle <0$, we see that
\begin{equation}
V={\rm Span}_{\mathbb{R}}\{Y,{\rm Re}Y_{z},{\rm Im}Y_{z},Y_{z\bar{z}}\}
\end{equation}
is an oriented rank-4 Lorentzian sub-bundle over $U$, and
there is a natural decomposition
of the oriented trivial bundle $U\times \mathbb{R}^{n+4}_{1}=V\oplus V^{\perp}$, where $V^{\perp}$ is the orthogonal complement of $V$ with an induced natural orientation.
Note that both, $V$ and $V^{\perp}$,  are independent of the choice of $Y$
and $z$, and therefore are conformally invariant. In fact, we obtain a global conformally invariant bundle decomposition
$M\times \mathbb{R}^{n+4}_{1}=V\oplus V^{\perp}$. For any $p\in M$, we denote by $V_p$ the fiber of $V$ at $p$. And
the complexifications of  $V$ and $V^{\perp}$ are denoted by $V_{\mathbb{C}}$ and
$V^{\perp}_{\mathbb{C}}$ respectively.

Since $Y$ takes values in the forward light cone $\mathcal{C}_+^{n+3}$, we are only
interested in conformal transformations which are contained in  $SO^+(1,n+3)$.

Fixing a local coordinate $z$ on $U$, there exists a unique local lift $Y$ in $\mathcal{C}^{n+3}$ satisfying
$|{\rm d}Y|^2=|{\rm d}z|^2$, called the canonical lift (with respect
to $z$).
Clearly, for a canonical lift we have
$\langle Y_{z},Y_{\bar{z}}\rangle = \frac{1}{2}.$

 Given a canonical lift $Y$ we choose the frame $\{Y,Y_{z},Y_{\bar{z}},N\}$ of
$V_{\mathbb{C}}$, where $N$ is the uniquely determined section of $V$ over $U$ satisfying
\begin{equation}\label{eq-N}
\langle N,Y_{z}\rangle=\langle N,Y_{\bar{z}}\rangle=\langle
N,N\rangle=0,\langle N,Y\rangle=-1.
\end{equation}
Note that $N$ lies in the forward light cone $\mathcal{C}_+^{n+3}$
 and that
$N\equiv 2Y_{z\bar{z}}\!\!\mod Y$ holds.

Next we define \emph{the conformal Gauss map} of $y$.

\begin{definition} \label{def-gauss} $($\cite{Bryant1984,BPP,Ejiri1988,Ma}$)$
Let $y:M\to S^{n+2}$ be a conformal immersion from a Riemann surface $M$. The  \emph{conformal Gauss map} of $y$ is defined by
\begin{equation}\begin{array}{ccccc}
                 Gr : & M &\rightarrow&
Gr_{1,3}(\mathbb{R}^{n+4}_{1}) &= SO^+(1,n+3)/SO^+(1,3)\times SO(n)\\
                \ & p\in M & \mapsto & V_p &\ \\
                \end{array}
\end{equation}
Moreover, let $Y$ be the canonical
lift of $y$ with respect to a local coordinate $z = u + i v$. Embedding $Gr_{1,3}(\mathbb{R}^{n+4}_{1})$ into the exterior product $\Lambda^4\mathbb{R}^{n+4}_1$,
we have
\[
Gr=Y\wedge Y_{u}\wedge Y_{v}\wedge N=-2i\cdot Y\wedge Y_{z}\wedge
Y_{\bar{z}} \wedge N
\]
where $N$ is the frame vector
determined in \eqref{eq-N}.

Note that in this case, the $4-$dimensional Lorentzian subspace $\hbox{Span}_{\R}\{Y,N,Y_u,Y_v\}$ has a well-defined natural orientation. This implies that $Gr$ maps to  the symmetric space $SO^+(1,n+3)/SO^+(1,3)\times SO(n)$ instead of $SO^+(1,n+3)/S(O^+(1,3)\times O(n))$, which is usually used in the literature.
Note that $Gr$ depends on the conformal immersion $y$ as well as the chosen complex structure of the Riemann surface $M$. We will therefore sometimes write $Gr = Gr_y$ to emphasize this, in particular, since the latter fact is
usually not pointed out in the literature.
\end{definition}

Given a (local) canonical lift $Y$ we note that $Y_{zz}$ is orthogonal to
$Y$, $Y_{z}$ and $Y_{\bar{z}}$. Therefore there exists a complex valued function $s$
and a section $\kappa\in \Gamma(V_{\mathbb{C}}^{\perp})$ such that
\begin{equation}
Y_{zz}=-\frac{s}{2}Y+\kappa.
\end{equation}
This defines two basic invariants of $y$:
$\kappa$, called \emph{the conformal Hopf differential} of $y$ ,
and $s$, called \emph{the Schwarzian} of $y$.
Clearly, $\kappa$ and $s$ depend on the
coordinate $z$ (for a more detailed discussion, see \cite{BPP,Ma}).
Let $D$
denote the $V_{\mathbb{C}}^{\perp}$ part of the natural connection of
$\mathbb{C}^{n+4}$. Then for any section $\psi\in
\Gamma(V_{\mathbb{C}}^{\perp})$ of the normal bundle $V_{\mathbb{C}}^{\perp}$
and any (local) canonical lift $Y$ of some conformal immersion $y$ into $S^{n+2}$
we obtain
the structure equations (\cite{BPP}, \cite{Ma2006}):
\begin{equation}\label{eq-moving}
\left\{\begin {array}{lllll}
Y_{zz}=-\frac{s}{2}Y+\kappa,\\
Y_{z\bar{z}}=-\langle \kappa,\bar\kappa\rangle Y+\frac{1}{2}N,\\
N_{z}=-2\langle \kappa,\bar\kappa\rangle Y_{z}-sY_{\bar{z}}+2D_{\bar{z}}\kappa,\\
\psi_{z}=D_{z}\psi+2\langle \psi,D_{\bar{z}}\kappa\rangle Y-2\langle
\psi,\kappa\rangle Y_{\bar{z}}.
\end {array}\right.
\end{equation}
For these structure equations the integrability conditions are
the conformal Gauss, Codazzi and Ricci equations respectively (\cite{BPP}, \cite{Ma2006}):
\begin{equation}\label{eq-integ}
\left\{\begin {array}{lllll} \frac{1}{2}s_{\bar{z}}=3\langle
\kappa,D_z\bar\kappa\rangle +\langle D_z\kappa,\bar\kappa\rangle,\\
{\rm Im}(D_{\bar{z}}D_{\bar{z}}\kappa+\frac{\bar{s}}{2}\kappa)=0,\\
R^{D}_{\bar{z}z}=D_{\bar{z}}D_{z}\psi-D_{z}D_{\bar{z}}\psi =
2\langle \psi,\kappa\rangle\bar{\kappa}- 2\langle
\psi,\bar{\kappa}\rangle\kappa.
\end {array}\right.
\end{equation}

Choosing an oriented orthonormal frame $\{\psi_j,j=1,\cdots,n\}$ of the normal bundle $V^{\perp}$ over $U$,
we can write the normal connection as $D_z\psi_j=\sum_{l=1}^{n}b_{jl}\psi_l$ with $b_{jl}+b_{lj}=0.$
Then, the conformal Hopf differential $\kappa$ and its derivative $D_{\bar{z}}\kappa$ are of the form
\begin{equation} \label{defkappabeta}
\kappa=\sum_{j=1}^{n}k_j\psi_j,\ D_{\bar{z}}\kappa=\sum_{j=1}^{n}\beta_j\psi_j,\
\hbox{with }\beta_j=k_{j\bar{z}}- \sum_{j=1}^{n}\bar{b}_{jl}k_l,\ j=1,\cdots,n.\end{equation}
Finally, set
$\phi_1=\frac{1}{\sqrt{2}}(Y+N),\ \phi_2=\frac{1}{\sqrt{2}}(-Y+N),\ \phi_3= Y_z+Y_{\bar{z}},\ \phi_4=i(Y_z-Y_{\bar{z}}),\   k^2=\sum_{j=1}^{n}|k_j|^2,$
and set
\begin{equation}\label{F}
F:=\left(\phi_1,\phi_2,\phi_3,\phi_4,\psi_1,\cdots,\psi_n\right).
\end{equation}
\begin{proposition}\label{frame} Let $y:M\rightarrow  S^{n+2}$ be a conformal immersion
and $Y$ its canonical lift over the open contractible set $U \subset M$.
Then the frame $F$ attains values in $SO^+(1,n+3)$,
and  the Maurer-Cartan form $\alpha=F^{-1}dF$ of $F$ is of the form $$\alpha=\left(
                   \begin{array}{cc}
                     A_1 & B_1 \\
                     B_2 & A_2 \\
                   \end{array}
                 \right)dz+\left(
                   \begin{array}{cc}
                     \bar{A}_1 & \bar{B}_1 \\
                     \bar{B}_2 & \bar{A}_2 \\
                   \end{array}
                 \right)d\bar{z},$$
with
\begin{equation}A_1=\left(
                             \begin{array}{cccc}
                               0 & 0 & s_1 & s_2\\
                               0 & 0 & s_3 & s_4 \\
                               s_1 & -s_3 & 0 & 0 \\
                               s_2 & -s_4 & 0 & 0 \\
                             \end{array}
                           \right),\   A_2=\left(
                             \begin{array}{cccc}
                               b_{11} & \cdots &  b_{n1} \\
                               \vdots& \vdots & \vdots \\
                               b_{1n} &\cdots & b_{nn} \\
                             \end{array}
                           \right),\ \end{equation}
\begin{equation}\label{s}
\left\{\begin{split}&s_1=\frac{1}{2\sqrt{2}}(1-s-2k^2),\ s_2=-\frac{i}{2\sqrt{2}}(1+s-2k^2),\\
&s_3=\frac{1}{2\sqrt{2}}(1+s+2k^2),\ s_4=-\frac{i}{2\sqrt{2}}(1-s+2k^2),\\
\end{split}\right.
\end{equation}
\begin{equation} \label{B1}
B_1=\left(
      \begin{array}{ccc}
         \sqrt{2} \beta_1 & \cdots & \sqrt{2}\beta_n \\
         -\sqrt{2} \beta_1 & \cdots & -\sqrt{2}\beta_n \\
        -k_1 & \cdots & -k_n \\
        -ik_1 & \cdots & -ik_n \\
      \end{array}
    \right),  \ \
B_2=\left(
      \begin{array}{cccc}
        \sqrt{2} \beta_1& \sqrt{2} \beta_1 & k_1& i k_1 \\
        \vdots & \vdots & \vdots & \vdots\\
      \sqrt{2} \beta_n& \sqrt{2} \beta_n & k_n & ik_n \\
      \end{array}
    \right)=-B_1^tI_{1,3}.
 \end{equation}
                       \begin{equation*} \end{equation*}

 Conversely, assume we have some frame $F=(\phi_1,\cdots,\phi_4,\psi_1,\cdots,\psi_{n+4}):U\rightarrow SO^+(1,n+3)$
such that the Maurer-Cartan form $\alpha=F^{-1}dF$  of $F$ is of the above form,
then
\begin{equation}\label{eq-y-from-map}y=\pi_0(F)=:\left[ (\phi_1-\phi_2)\right]
\end{equation}
is a conformal immersion from $U$ into $Q^{n+2}\cong S^{n+2}$ (with canonical lift $\frac{1}{\sqrt{2}}(\phi_1-\phi_2)$).
    \end{proposition}

\begin{remark}
This form of the Maurer-Cartan form is of great importance.
Note that for any point in $M$ the rank of $B_1$ is at most 2.
 Moreover, $B_1 ^t I_{1,3} B_1 = 0$ holds.
The case $B_1\equiv 0$ is equivalent to $y$ being conformally equivalent to a round sphere. For the non-trivial case, $B_1 \neq 0$, a detailed discussion will be given in Theorem 3.10.
\end{remark}


\subsection{Willmore surfaces and harmonicity}

The conformal
Hopf differential $\kappa$ plays an important role in the investigation of Willmore
surfaces.
A direct computation using \eqref{eq-moving} shows that the conformal Gauss map $Gr$ induces a conformally invariant (possibly degenerate )
metric
\[
g:=\frac{1}{4}\langle {\rm d}G,{\rm d}G\rangle=\langle
\kappa,\bar{\kappa}\rangle|dz|^{2}
\]
globally on $M$ (see \cite{BPP}). Note that this metric degenerates at umbilical points of $y$, which are by definition the points where $\kappa$ vanishes (For
Willmore tori with umbilical lines, see \cite{Ba-Bo}).
Nevertheless, this metric can be used to define the Willmore
functional.

\begin{definition}(\cite{BPP}, \cite{Ma2006}) The \emph{ Willmore functional} of $y$ is
defined as four  times the area of M with respect to the metric above:
\begin{equation}\label{eq-W-energy}
W(y):=2i\int_{M}\langle \kappa,\bar{\kappa}\rangle dz\wedge
d\bar{z}.
\end{equation}
An immersed surface $y:M\rightarrow S^{n+2}$ is called a
\emph{Willmore surface}, if it is a critical point of the Willmore
functional with respect to any variation (with compact support) of the map $y:M\rightarrow
 S^{n+2}$.
\end{definition}

Note that the above definition of the Willmore functional coincides
with the usual definition (See \cite{BPP}, \cite{Ma-W1}). It is well-known that Willmore surfaces can be characterized as
follows
\cite{Bryant1984,BPP,Ejiri1988,Wang1998}.

\begin{theorem}\label{thm-willmore} For a conformal immersion $y:M\rightarrow  S^{n+2}$, the following three conditions
are equivalent:
 \begin{enumerate}
\item $y$ is Willmore;

\item The conformal Gauss map $Gr$ is a conformally harmonic map into
$G_{3,1}(\mathbb{R}^{n+3}_{1})$;

\item The conformal Hopf differential $\kappa$ of $y$ satisfies the
``Willmore condition":
\begin{equation}\label{eq-willmore}
D_{\bar{z}}D_{\bar{z}}\kappa+\frac{\bar{s}}{2}\kappa=0
\end{equation}
for any contractible chart of $M$.
 \end{enumerate}
\end{theorem}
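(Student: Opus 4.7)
The plan is to prove the equivalences (i) $\Leftrightarrow$ (iii) and (ii) $\Leftrightarrow$ (iii) separately, working on a contractible chart $U\subset M$ with canonical lift $Y$ and moving frame $F$ as in Proposition \ref{frame}.

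For (i) $\Leftrightarrow$ (iii), I would compute the first variation of the conformally invariant functional
\[W(y)=2i\int_M \langle \kappa,\bar\kappa\rangle\, dz\wedge d\bar z.\]
Take a one-parameter family of conformal immersions $y_t$ with compactly supported variation, and work with the associated canonical lifts $Y_t$. Using the structure equations \eqref{eq-moving} and $Y_{zz}=-\tfrac{s}{2}Y+\kappa$, one expresses the infinitesimal variations $\dot\kappa$ and the variation of the area element in terms of a normal section $\xi\in\Gamma(V^\perp_{\mathbb{C}})$ representing $\dot Y$ modulo tangential and vertical directions (these can be reabsorbed by reparametrization and rescaling of the lift, which do not affect $W$). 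Substituting into $\tfrac{d}{dt}\big|_{t=0}W(y_t)$ and integrating by parts twice against $\xi$, the Euler-Lagrange equation reduces to $\mathrm{Re}\bigl(D_{\bar z}D_{\bar z}\kappa+\tfrac{\bar s}{2}\kappa\bigr)=0$. Combined with the Codazzi-type integrability condition $\mathrm{Im}\bigl(D_{\bar z}D_{\bar z}\kappa+\tfrac{\bar s}{2}\kappa\bigr)=0$ already present in \eqref{eq-integ}, this yields exactly \eqref{eq-willmore}.

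For (ii) $\Leftrightarrow$ (iii), I would invoke the frame $F$ of Proposition \ref{frame}. With respect to the symmetric decomposition $\mathfrak{so}(1,n+3)=\mathfrak{k}\oplus\mathfrak{m}$ associated with $SO^+(1,n+3)/(SO^+(1,3)\times SO(n))$, the diagonal blocks $A_1,A_2$ (and their conjugates) lie in $\mathfrak{k}$, while the off-diagonal blocks $B_1,B_2$ (and their conjugates) lie in $\mathfrak{m}$. The standard characterization of harmonicity of $Gr$ via its framing is
\[d\!\ast\!\alpha_{\mathfrak{m}}+[\alpha_{\mathfrak{k}}\wedge\ast\alpha_{\mathfrak{m}}]=0.\]
Writing $\alpha_{\mathfrak{m}}=\alpha'_{\mathfrak{m}}dz+\alpha''_{\mathfrak{m}}d\bar z$ on the Riemann surface and using $\ast dz=-i\,dz$, $\ast d\bar z=i\,d\bar z$, this collapses to $\partial_{\bar z}\alpha'_{\mathfrak{m}}+[\alpha''_{\mathfrak{k}},\alpha'_{\mathfrak{m}}]=0$. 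Unpacking this block identity for the $B_1$ entry using the explicit formulas \eqref{s} for $s_1,\ldots,s_4$, together with the definitions $\kappa=\sum k_j\psi_j$ and $\beta_j=k_{j\bar z}-\sum \bar b_{jl}k_l$, directly produces the normal-bundle identity $D_{\bar z}D_{\bar z}\kappa+\tfrac{\bar s}{2}\kappa=0$ (the conjugate block gives its complex conjugate).

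The main obstacle is the bookkeeping: identifying which components of $\partial_{\bar z}B_1+[\text{terms from }A_1,\bar A_2]$ correspond to $D_{\bar z}D_{\bar z}\kappa$ versus $\tfrac{\bar s}{2}\kappa$, tracking signs and factors of $\sqrt{2}$ and $i$ coming from the definitions of $\phi_1,\ldots,\phi_4$, and correctly interpreting the variational formula modulo the conformal gauge freedom in the lift $Y$. The formalism of Proposition \ref{frame} is designed so that both the variational and the harmonic-map computations ultimately reduce to the same algebraic identity in the $\mathfrak{m}$-block, so conceptually the equivalence is transparent once the dictionary is set up, and the verification is routine though notation-heavy.
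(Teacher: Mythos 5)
The paper does not actually prove this theorem: it is quoted as a classical result with references to Bryant, Ejiri, \cite{BPP} and \cite{Wang1998}, so there is no in-paper argument to measure you against. Judged on its own, your outline is the standard one and is sound. The equivalence (ii) $\Leftrightarrow$ (iii) as you set it up is essentially the computation the paper itself performs later in Section 3.2: the harmonicity of $Gr$ framed by $F$ of Proposition \ref{frame} reduces to the single $\mathfrak{p}$-block equation $B_{1\bar z}+\bar A_1B_1-B_1\bar A_2=0$, and with the explicit entries \eqref{s} the rows containing $k_j$ reproduce the definition $\beta_j=k_{j\bar z}-\sum_l\bar b_{jl}k_l$ (i.e.\ $D_{\bar z}\kappa$), while the rows containing $\beta_j$ give $\beta_{j\bar z}-\sum_l\bar b_{jl}\beta_l+\frac{\bar s}{2}k_j=0$, which is exactly \eqref{eq-willmore} componentwise; this is the same bookkeeping that underlies equations \eqref{harmonic-3} ff.\ in the proof of Theorem \ref{th-Willmore-harmonic-U}. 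For (i) $\Leftrightarrow$ (iii) your plan follows Bryant/\cite{BPP}: the Euler--Lagrange operator is the real part of $D_{\bar z}D_{\bar z}\kappa+\frac{\bar s}{2}\kappa$, and the conformal Codazzi equation in \eqref{eq-integ} supplies the imaginary part, which is precisely why the full complex equation characterizes criticality (and why the paper remarks that \eqref{eq-willmore} is strictly stronger than Codazzi). One point you treat too lightly: when you say tangential variations and lift rescalings ``can be reabsorbed,'' note that a general variation $y_t$ changes the induced conformal structure, so $z$ does not remain a conformal coordinate for $y_t$; the clean way out is that $W$ is parametrization-invariant (it is four times the area in the metric $\langle\kappa,\bar\kappa\rangle|dz|^2$ induced by $Gr$), so it suffices to compute the variation along normal fields, after which your integration by parts goes through. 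With that caveat made explicit, the proposal is a correct, if computation-heavy, route to the theorem.
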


\begin{remark}
The definitions and statements given above are correct, as long as the immersion $y$ is sufficiently often differentiable.
Considering the classical Willmore functional one observes that for the functional to make sense we only need that $y$ is contained in the Sobolev space $ W^{2,2}$.
It has been shown by Kuwert and Sch{\"a}tzle (see \cite{Ku-Sch} and references therein) that already under these  weak assumptions it follows that the immersion $y$ is real analytic. As to the analytic properties of Willmore surfaces, we also refer to \cite{Rivi¨¨re} and \cite{Be-Ri} for recent progress.

As a consequence of these analytic properties, the conformal Gauss map of any Willmore immersion is real analytic as well. We will therefore always assume w.l.g. that our immersions all are real analytic.
Since we have shown just above that these Gauss maps are conformally harmonic, in this paper we will exclusively consider real analytic harmonic maps. Note also that a general result of Eells and Sampson \cite{Ee-S} states that harmonic maps from surfaces with  Riemannian metric into Riemannian manifolds are real analytic.
\end{remark}

Now we introduce the notion of the so-called ``dual Willmore surface", which is of essential importance in Bryant's and Ejiri's description of Willmore two-spheres.
\begin{definition}\label{def-dual} (\cite{Bryant1984}, page 399 of \cite{Ejiri1988})
Let $y:M\rightarrow S^{n+2}$ be a Willmore surface and $Gr$  its conformal Gauss map.  Let $M_0$ denote the  set of umbilical points of $y$.  A conformal map $\hat{y}:M\setminus M_0\rightarrow S^{n+2}$ which does not coincide with $y$ is called a ``dual surface" of $y$, if either $\hat y$ reduces to a point or on an open dense subset of $M\setminus M_0$ the map  $\hat y$  is an immersion  and the conformal Gauss map $Gr_{\hat{y}}$ of $\hat{y}$ spans at all points the same subspace as $Gr_y$ (Clearly,  $\hat{y}$  then also is a Willmore surface).
\end{definition}
  There exist many Willmore surfaces (\cite{Ba-Bo}, \cite{Bryant1982}, \cite{Bryant1984}, \cite{Ejiri1988}, \cite{Le-Pe-Pin}, etc.) which admit dual Willmore surfaces.
But in general a Willmore surface in $S^{n+2}$ may not admit a dual surface. To describe Willmore surfaces having dual surfaces,
Ejiri introduced the so-called {\em S-Willmore surfaces}  in \cite{Ejiri1988}.
 For this paper it is convenient to define S-Willmore surfaces as follows (see also \cite{Ma2005}):
\begin{definition} (\cite{Ejiri1988})
A Willmore immersion $y:M\rightarrow S^{n+2}$ is called an S-Willmore surface if on any open subset $U$, away from the umbilical points, the conformal Hopf differential $\kappa$ of $y$ satisfies
$D_{\bar{z}}\kappa || \kappa,\
~\hbox{ i.e.  }\  D_{\bar{z}}\kappa+\frac{\bar{\mu}}{2}\kappa=0~\hbox{ for some } \mu:U\rightarrow \mathbb{C}. $
\end{definition}

\begin{corollary}\label{S-frame} Let $y$ be a Willmore surface which is not totally umbilical. Then $y$ is S-Willmore  if and only if the
(maximal) rank of $B_1$ in Proposition \ref{frame} is $1$.
\end{corollary}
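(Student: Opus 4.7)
The plan is a direct linear-algebra inspection of the block $B_1$ combined with the real-analyticity of $y$ to handle the umbilic locus; I do not foresee a serious obstacle, since the content is essentially bookkeeping.

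First I would read off the structure of $B_1$: its second row equals $-1$ times its first, and its fourth row equals $i$ times its third, so viewed as a $4\times n$ complex matrix
\[
\operatorname{rank}_{\mathbb{C}} B_1 \;=\; \dim_{\mathbb{C}}\operatorname{Span}_{\mathbb{C}}\bigl\{(k_1,\ldots,k_n),\,(\beta_1,\ldots,\beta_n)\bigr\} \;\leq\; 2.
\]
Because $\{\psi_j\}$ is a $\mathbb{C}$-orthonormal basis of $V^{\perp}_{\mathbb{C}}$ and $\kappa=\sum_j k_j\psi_j$, $D_{\bar z}\kappa=\sum_j\beta_j\psi_j$, this rank equals $\dim_{\mathbb{C}}\operatorname{Span}_{\mathbb{C}}\{\kappa,\,D_{\bar z}\kappa\}$ inside $V^{\perp}_{\mathbb{C}}$.

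Next I would use that $y$ is not totally umbilical and, by the real-analyticity remark above, has $\kappa$ real analytic. Hence the umbilic locus $M_0=\{\kappa=0\}$ is closed with empty interior, and on the open dense set $M\setminus M_0$ one has $\operatorname{rank} B_1\geq 1$. At a non-umbilic point $\operatorname{rank} B_1=1$ is equivalent to $D_{\bar z}\kappa$ being a $\mathbb{C}$-multiple of $\kappa$, i.e.\ to $D_{\bar z}\kappa+\tfrac{\bar\mu}{2}\kappa=0$ for some locally defined complex function $\mu$, which is exactly the S-Willmore condition. At points of $M_0$ the bottom two rows of $B_1$ vanish, so $\operatorname{rank} B_1\leq 1$ automatically there. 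Therefore $\max_{p\in M}\operatorname{rank} B_1(p)=1$ iff $D_{\bar z}\kappa\parallel\kappa$ on all of $M\setminus M_0$, iff $y$ is S-Willmore; conversely, if $y$ is not S-Willmore then at some point of $M\setminus M_0$ the sections $\kappa$ and $D_{\bar z}\kappa$ are $\mathbb{C}$-linearly independent, producing $\operatorname{rank} B_1=2$ and maximal rank $2$.

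The only small checks I would carry out along the way are (i) that the $\beta_j$ appearing in $B_1$ really are the coefficients of $D_{\bar z}\kappa$ in the frame $\{\psi_j\}$, which follows from the normal-connection identity $D_{\bar z}\psi_j=\sum_l \bar b_{jl}\psi_l$ together with the formula $\beta_j=k_{j\bar z}-\sum_l \bar b_{jl}k_l$ stated just before Proposition \ref{frame}; and (ii) that the proportionality $D_{\bar z}\kappa\parallel\kappa$ is independent of the choice of conformal coordinate $z$, which is immediate from the transformation rule \eqref{eq-kappa-trans} together with its consequence for $D_{\bar z}\kappa$. Neither of these introduces any substantive difficulty, so the corollary follows at once from the reductions above.
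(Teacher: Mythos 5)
Your proof is correct and is exactly the intended argument: the paper states this corollary without proof because, as you observe, the rows of $B_1$ in Proposition \ref{frame} reduce its rank to $\dim_{\mathbb{C}}\operatorname{Span}_{\mathbb{C}}\{\kappa, D_{\bar z}\kappa\}$, so maximal rank $1$ is equivalent to $D_{\bar z}\kappa\parallel\kappa$ off the umbilic set, i.e.\ to the S-Willmore condition. Your two auxiliary checks (identification of the $\beta_j$ and coordinate independence) are the right ones and introduce no gap.
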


\begin{theorem}\label{thm-dual gauss map} \
 \begin{enumerate}
 \item $($\cite{Bryant1984}, Theorem 7.1 of \cite{Ejiri1988}$)$ A (non totally umbilical) Willmore surface $y$ is S-Willmore  if and only if it has a unique dual (Willmore) surface $\hat y$ on $M\setminus M_0$.

Moreover, if $y$ is S-Willmore, the dual map $\hat y$ can be extended to $M$.
\item $($Theorem 2.9 of \cite{Ma}$)$ If the dual surface $\hat y$ of $y$ is immersed at $p\in M$,  then $Gr_{\hat{y}}(p)$ spans the same subspace as $Gr_y(p)$, but its orientation  is  opposite to the one of $Gr_y$.
\end{enumerate}
\end{theorem}

Note that the following theorem can also be derived from  \cite{Ejiri1988} and \cite{Ma}. We include a proof here to be used in the construction of Willmore surfaces from harmonic maps discussed below.
\begin{theorem} \cite{Ejiri1988}, \cite{Ma2005} \label{Conformal-Gauss-map} Let $f:M\rightarrow SO^+(1,n+3)/SO^+(1,3)\times SO(n)$ be  the (oriented)  conformal Gauss map of a non totally umbilical Willmore surface $y:M\rightarrow S^{n+2}$. If $f$ is also the conformal Gauss map of a surface $\tilde{y}$ on an open subset $U$ of $M$, then
$\tilde{y}$ is equal to $y$ on $U$.
   \end{theorem}
\begin{proof} Since $y$ is not totally umbilical,  the set $M_1$ of points of $M$, where $y$ has a non-zero conformal Hopf differential, is open and dense in $M$.

Suppose that $\tilde{y}$ is different from $y$ on some open  subset $U_1$ of $U$. For any open contractible subset $U_2\subset U_1$ we consider the canonical lift $(Y,z)$ of $y$. Since $y$ and $\tilde{y}$ are different on $U_1$, we can choose on $U_2\cap M_1$ a  (not necessarily canonical) lift of $\tilde{y}$ which is of the form (see also (4.1) in \cite{Ma2006})
\begin{equation}\label{eq-dual-Y}
 \tilde Y=N+\bar\mu Y_z+\mu Y_{\bar{z}}+\frac{|\mu|^2}{2}Y,
\end{equation}
i.e., $\tilde{y}=[\tilde Y]$. By  (2.5),  a straightforward computation shows that (See (4.2), (4.3) of \cite{Ma2006})
\begin{equation}\label{eq-dual-Yz}
\tilde Y_{ z}=\frac{\mu}{2}\tilde Y+\rho(Y_z+\frac{\mu}{2}Y)+\theta(Y_{\bar z}+\frac{\bar\mu}{2}Y)+2D_{\bar{z}}\kappa+ \bar\mu \kappa,
\end{equation}
with $\rho=\mu_{\bar z}-2\langle\kappa,\kappa\rangle,$ $\theta=\mu_{z}-\frac{\mu^2}{2}-s$.

 Next we distinguish two cases.

(a) If $y$ is S-Willmore, there exists a unique $\mu$ such that $2D_{\bar{z}}\kappa+ \bar\mu \kappa=0$ on $U_2\cap M_1$. The Willmore equation now is equivalent to $\theta=0$. Hence we have  $\tilde Y_{ z}=\frac{\mu}{2}\tilde Y+\rho(Y_z+\frac{\mu}{2}Y)\in\hbox{Span}_{\C}\{Y,Y_z,Y_{\bar{z}}, Y_{z\bar z}\}$ and
\[\tilde Y_{ z\bar z }=(\frac{\mu}{2}\tilde Y)_{\bar z}+\rho_{\bar{z}}(Y_z+\frac{\mu}{2}Y)+\rho(Y_z+\frac{\mu}{2}Y)_{\bar{z}}\in\hbox{Span}_{\C}\{Y,Y_z,Y_{\bar{z}}, Y_{z\bar z}\},\]
i.e., $[\tilde Y]$ is exactly the dual Willmore surface of $y$. Now Theorem \ref{thm-dual gauss map} shows that the orientation of $Gr_{\tilde{y}}$ is opposite to the orientation of $Gr_y$. This contradiction shows that $y$ and $\tilde{y}$ can not differ on an open subset of $M$.

(b) If $y$ is  not an S-Willmore  surface, then by definition, there exists no $\mu$ such that $ D_{\bar{z}}\kappa+\frac{\bar\mu}{2}\kappa=0$ holds.
Hence for any $\mu$ we obtain  $\tilde Y_z\not\in\hbox{Span}_{\C}\{Y,Y_z,Y_{\bar{z}}, Y_{z\bar z}\}$. So $\tilde{y}=[\tilde Y]$ can not have $f$ as its conformal Gauss map.
\end{proof}

\begin{remark}\
\begin{enumerate}
\item Note that in terms of Proposition \ref{frame} the condition in (a) above is equivalent to $rank B_1 = 1$ and the condition in (b) is
equivalent to $rank B_1= 2$.

\item In Case (a), although $f$ is not equal to  the conformal Gauss map $Gr_{\tilde y}$ of $\tilde{y}$, if we change the orientation of $U$, then we will have $f=Gr_{\tilde y}$.
\end{enumerate}
\end{remark}

We will say ``the conformal Gauss map contains a constant lightlike vector $Y_0 $'' if there exists a non-zero constant lightlike vector $Y_0$ in $\mathbb{R}^{n+4}_1$ satisfying $Y_0\in V_{p}$ for all $p\in M$.  Then a well-known fact states (one can find a proof in \cite{Helein} or \cite{Ma-W1})
\begin{theorem}\label{minimal} A Willmore surface $y$ is conformally equivalent to a minimal surface in $R^{n+2}$  if and
only if its conformal Gauss map $Gr$ contains a constant lightlike vector.
\end{theorem}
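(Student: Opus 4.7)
The plan is to reduce both directions of the theorem to a single computation, carried out with a non-canonical ``stereographic lift'' of $y$. Fix once and for all the standard forward lightlike vector $e=(1,-1,0,\dots,0)\in\mathbb{R}^{n+4}_1$. For any map $x\colon M\to \mathbb{R}^{n+2}$ the formula
\[
\tilde Y \;=\; \tfrac12\bigl(|x|^2+1,\ 1-|x|^2,\ 2x\bigr)
\]
defines a lift into $\mathcal{C}^{n+3}$ of the inverse stereographic image of $x$ from the point $[e]\in Q^{n+2}\cong S^{n+2}$. Since the subbundle $V$ is independent of the choice of lift, I will compute $V_p$ from $\tilde Y$ rather than from a canonical lift. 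A direct differentiation yields
\[
\tilde Y_z \;=\; \langle x,x_z\rangle\,e + (0,0,x_z),\qquad
\tilde Y_{z\bar z} \;=\; \bigl(\langle x_z,x_{\bar z}\rangle + \langle x,x_{z\bar z}\rangle\bigr)\,e + (0,0,x_{z\bar z}).
\]

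For ``$\Rightarrow$'' I would use conformal invariance (an ambient Möbius transformation just transports a constant lightlike vector to another) to assume that $y$ is itself the inverse stereographic image of a conformal minimal immersion $x\colon M\to \mathbb{R}^{n+2}$ from $[e]$. In isothermal coordinates minimality is $x_{z\bar z}\equiv 0$, which collapses the formula above to $\tilde Y_{z\bar z}=\langle x_z,x_{\bar z}\rangle\,e$. Since $\langle x_z,x_{\bar z}\rangle$ is the nowhere-vanishing conformal factor of $x$, the constant lightlike vector $e$ belongs to $V_p$ for every $p\in M$.

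For ``$\Leftarrow$'', since $SO^+(1,n+3)$ acts transitively on forward lightlike directions I may assume the constant vector is exactly $e$. On the non-empty open set $M'=\{p\in M:y(p)\neq [e]\}$ the stereographic lift $\tilde Y$ is available, with $x\colon M'\to \mathbb{R}^{n+2}$ the stereographic projection of $y$. The real structure of $V$ forces the coefficients of $\tilde Y_z$ and $\tilde Y_{\bar z}$ to be complex conjugates and the others real, so I can write
\[
e \;=\; \alpha\,\tilde Y + \beta\,\tilde Y_z + \bar\beta\,\tilde Y_{\bar z} + \gamma\,\tilde Y_{z\bar z},\qquad \alpha,\gamma\in\mathbb{R},\ \beta\in\mathbb{C}.
\]
Summing the first two coordinates gives $\alpha=0$; the last $n+2$ coordinates give the single identity $\beta x_z+\bar\beta x_{\bar z}+\gamma x_{z\bar z}=0$. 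Pairing with $x_z$ and using conformality $\langle x_z,x_z\rangle=0$ (hence also $\langle x_{z\bar z},x_z\rangle=\tfrac12\partial_{\bar z}\langle x_z,x_z\rangle=0$) forces $\bar\beta\langle x_z,x_{\bar z}\rangle=0$, so $\beta=0$. Then $\gamma x_{z\bar z}\equiv 0$, and because $e\neq 0$ one must have $\gamma\neq 0$ (as confirmed by pairing $e$ with $\tilde Y$, which gives $\gamma=\langle x_z,x_{\bar z}\rangle^{-1}$). Thus $x_{z\bar z}\equiv 0$ on $M'$, i.e.\ $x$ is a conformal minimal immersion there.

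The only delicate step is to upgrade this conclusion from $M'$ to all of $M$. Here the real analyticity of Willmore immersions (cf.\ the remark following Theorem~\ref{thm-willmore}) enters: the set $M\setminus M'=\{p:\langle Y(p),e\rangle=0\}$ is a real analytic subvariety, and it cannot be all of $M$ because $y$ is an immersion, so $M'$ is open and dense in $M$. The hard part, in my view, is not the algebra itself but the bookkeeping of signs in the Minkowski inner product and the explicit identification of the stereographic pole with $[e]$; once these conventions are set, the two directions are two lines of calculation each.
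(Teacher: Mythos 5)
Your proof is correct, and note that the paper does not actually prove Theorem \ref{minimal}: it refers the reader to \cite{Ma-W1}, so your argument supplies a self-contained proof rather than reproducing one. What you do is the standard light-cone computation: exploit the lift-independence of the bundle $V$, work with the Euclidean lift $\tilde Y=\tfrac12\bigl(|x|^2+1,\,1-|x|^2,\,2x\bigr)$, and observe that $\tilde Y_{z\bar z}=\bigl(\langle x_z,x_{\bar z}\rangle+\langle x,x_{z\bar z}\rangle\bigr)e+(0,0,x_{z\bar z})$, so that $e\in V_p$ for all $p$ becomes equivalent to $x_{z\bar z}\equiv0$, i.e. to minimality of the conformal immersion $x$; your elimination of $\alpha$ and $\beta$ (using $\langle x_z,x_z\rangle=0$, hence $\langle x_{z\bar z},x_z\rangle=0$, and $\langle x_z,x_{\bar z}\rangle>0$), the computation $\gamma=\langle x_z,x_{\bar z}\rangle^{-1}\neq0$, and the identification of $\{p:\langle Y(p),e\rangle=0\}$ with $y^{-1}([e])$ (two lightlike vectors are orthogonal iff parallel) are all correct. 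Two small remarks. First, the Willmore hypothesis and the appeal to real analyticity are not needed: the equivalence holds for any conformal immersion, and density of $M'$ already follows from $y$ being an immersion, since $y\equiv[e]$ on an open set is impossible. Second, in the forward direction you implicitly assume $y$ misses the pole; if $y$ passes through $[e]$ (minimal surfaces with ends), the minimal immersion $x$ is defined only off $y^{-1}([e])$, so you should add there the same one-line density/continuity remark you make in the converse: $e\in V_p$ on the dense open set where the Euclidean chart applies, hence everywhere because $V$ is a continuous rank-$4$ subbundle.
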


There exist Willmore surfaces which fail to be immersions at some points. To include surfaces of this type, we introduce the notion of {\em Willmore maps} and {\em strong Willmore maps}.

\begin{definition}
A smooth map $y$ from a Riemann surface $M$ to $S^{n+2}$ is called a Willmore map if it is a conformal Willmore immersion on an open dense subset $\hat{M}$ of $M$.  If $\hat{M}$ is maximal, then the points in $M_0=M\backslash \hat{M}$ are called branch points of $y$, at which points $y$ fails to be an immersion.

Moreover, $y$ is called a strong Willmore map if it is a Willmore map and if the conformal Gauss map $Gr: \hat{M}\rightarrow SO^+(1,n+3)/SO^+(1,3)\times SO(n)$ of $y$ can be extended smoothly (and hence real analytically) to $M$.
\end{definition}
\begin{remark}
It is an interesting (open) problem under which conditions a  Willmore map will be a strong Willmore map.
\end{remark}

\begin{example}\
 \begin{enumerate}
 \item Let $x: \hat{M}\rightarrow \mathbb{R}^n$ be a complete minimal surface with finite total curvature. By the classical theory of minimal surfaces, $x$ is a minimal surface with finitely many ends $\{p_1,\cdots, p_r\}$. And by the inverse  of the stereographic projection $x$ becomes a smooth map $y$ from a compact Riemann surface $M=\hat{M}\cup\{p_1,\cdots, p_r\}$ to $S^n$. If all the ends of $x$ are embedded planar ends, $y$ will be a Willmore immersion (\cite{Bryant1984}, \cite{Bryant1988}). If some planar ends $\{p_{j_1},\cdots, p_{j_t}\}$ fail to be embedded, $y$ will be a strong Willmore map with branch  points $\{p_{j_1},\cdots, p_{j_t}\}$. If some ends $\{\tilde{p}_{1},\cdots,\tilde{p}_{l}\}$ fail to be planar, $y$ will be a Willmore map with branch  points $\{\tilde{p}_{1},\cdots,\tilde{p}_{l}\}$ and its conformal Gauss map can not be extended to these points, that is, $y$ is not a strong Willmore map.

\item  Another interesting type of Willmore surfaces consists  of the so-called
isotropic (or super-conformal \cite{Bryant1982}) surfaces in $S^4$,
which can be lifted to holomorphic or anti-holomorphic curves in the twistor bundle of $S^4$.
It is well known (\cite{Ejiri1988,Mus1,Mon,BFLPP}) that such surfaces of genus $0$, together with  all complete minimal surfaces of genus 0 in $\mathbb{R}^4$
with embedded planar ends, provide all the possibilities of Willmore
 immersions  of
two spheres in $S^4$.

\item It is well-known that all minimal surfaces in Riemannian space forms
 can be considered to be  Willmore surfaces in some $S^m$ (\cite{Bryant1984,Weiner}).
These surfaces are basic examples of Willmore surfaces.
Moreover, they are S-Willmore surfaces, see \cite{Ejiri1988,Ma}.
It will therefore be of  particular interest and importance to construct non-S-Willmore surfaces.

\item The first non-minimal Willmore surface was given by Ejiri in \cite{Ejiri1982}.
This non-S-Willmore Willmore surface is a homogeneous torus in $S^5$.
Later, using the Hopf bundle, Pinkall produced a family of non-minimal Willmore tori in $S^3$ via elastic curves (\cite{Pinkall1985}).\\
 \end{enumerate}
\end{example}

\begin{remark} ({\em H\'{e}lein and Ma's harmonic maps})

In \cite{Helein}, H\'{e}lein extended the treatment of Bryant \cite{Bryant1984} to deal with Willmore surfaces in $S^3$ by using
a loop group method \cite{DPW}. He used two kinds of harmonic maps: the conformal Gauss map and the ones he  called  ``roughly harmonic maps". In terms of the notation used here, for a Willmore immersion $y$ in $S^3$ with
a local lift $Y$,  choose $\hat{Y}\in \Gamma (V)$ such that $\langle \hat{Y}, \hat{Y}\rangle=0,$  and $\langle Y, \hat{Y}\rangle=-1.$ Then  H\'{e}lein's roughly harmonic map is defined by
\begin{equation}\mathfrak{H}=Y\wedge \hat{Y}: M\rightarrow Gr_{1,1}(R^{5}_1).
\end{equation}
The reason of the name ``roughly harmonic" is that although $\mathfrak{H}$ may not be harmonic in general, it really provides another family of flat connections (see (36) page 350 in \cite{Helein} for details). If one assumes furthermore that  $\hat{Y}$ satisfies
\begin{equation}\label{H-M} \hat{Y}_{z}\in \hbox{Span}_{\C}\{\hat{Y}, Y, Y_z\} \mod V^{\perp}_{\C} ~ \ \hbox{ for all } z,
\end{equation}
$\mathfrak{H}$ will be a harmonic map. Especially, for a Willmore surface $y$ in $S^3$, there always exists a dual surface (Recall Definition \ref{def-dual} or see \cite{Bryant1984}).  When $\hat{Y}$ is chosen as the lift of the dual surface $\hat{y}$ of $y$, one obtains an interesting harmonic map connecting the original surface and its dual surface.
It is straightforward to generalize H\'{e}lein's notion of  roughly harmonic maps to the case of  Willmore immersions into $S^{n+2}$, since the definition above does not involve the co-dimensional information.
Such natural generalizations following  H\'{e}lein have been worked out in \cite{Xia-Shen}, by using the treatment of \cite{Wang1998} on Willmore submanifolds.

In a different development, in \cite{Ma2006}, Ma considered the generalization of the notion of  a dual surface for a Willmore surface $y$ in $S^{n+2}$.  Let $\hat{Y}\in \Gamma (V)$ with $\langle \hat{Y},\hat{Y}\rangle=0,$ and $ \langle Y, \hat{Y}\rangle=-1.$
Ma found that if $\hat{Y}$ satisfies
\begin{equation} \begin{split}
&\hat{Y}_{z}\in \hbox{Span}_{\C}\{\hat{Y}, Y, Y_z\} \mod V^{\perp}_{\C}  ~~\hbox{and }~~ \langle \hat{Y}_{z},\hat{Y}_{z}\rangle=0~ \ \hbox{ for all } z,\
\end{split}
\end{equation}
then $[\hat{Y}]$ is a new Willmore surface (which may degenerate to a point, see \cite{Ma2006}).
In this case $[\hat{Y}]$ is called  ``an adjoint surface" of $y$. Different from dual surfaces, the adjoint surface $[\hat{Y}]$ is in general  not unique (a detailed discussion on this can be found in \cite{Ma2006}). Moreover, Ma showed that for an adjoint surface
 $[\hat{Y}]$ the map  $\mathfrak{H}=Y\wedge \hat{Y}: M\rightarrow Gr_{1,1}(R^{n+3}_1)$ is a conformal harmonic map.
Obviously this harmonic map defined by Ma is just a special case of H\'{e}lein's harmonic maps used in  \cite{Helein}, \cite{Xia-Shen}, but it may be a particularly natural case (See \cite{Helein2}).

Note that  for H\'{e}lein's harmonic map as well as for Ma's adjoint surfaces, it is usually not possible to prove
 global existence, since the solution of the equation \eqref{H-M} may have singularities. So it does not seem to be easy  to use this approach to discuss the global problem directly. To be more concrete, first we would like to point out that \eqref{H-M} is exactly the Riccati equation \[\mu_{z}-\frac{\mu^2}{2}-s=0.\]
Note that for S-Willmore surfaces, $\mu$ may take the value $\infty$ at some points. Therefore, using the expression of the dual surface \eqref{eq-dual-Y} as given in the proof of Theorem \ref{Conformal-Gauss-map}, at the points where $\mu$ approaches $\infty$, we have $[\hat{Y}]=[Y]$. This implies that the 2-dimensional Lorentzian bundle $Span_{\R}\{Y, \hat{Y}\}$ defined by $Y$ and $\hat{Y}$ reduces to a 1-dimensional lightlike bundle at these points. It stays unknown how to deal with the global properties for this kind of harmonic maps by using H\'{e}lein's approach.
 This is one of the reasons why we use the conformal Gauss map to study Willmore
surfaces, although the computations using H\'{e}lein's harmonic map would perhaps be somewhat easier.

The relation between our approach and H\'{e}lein's is very interesting, in particular in view of Ma's contributions.
We hope to be able to pursue this in a subsequent publication.
\end{remark}


\section{Conformally harmonic maps into $SO^+(1,n+3)/SO^+(1,3)\times SO(n)$}

In this section, we first review the basic description of harmonic maps.
Then we apply it to the harmonic maps into
 $SO^+(1,n+3)/SO^+(1,3)\times SO(n)$. We have seen above that Willmore surfaces are related to
conformally harmonic maps with special Maurer-Cartan forms. Since not every conformally harmonic map
is the conformal Gauss map of some  strong Willmore map, we give a
necessary and sufficient condition for a conformally harmonic map to be the conformal Gauss map of a strong Willmore map.


\subsection{Harmonic maps into the symmetric space G/K}

Let $N=G/K$ be a symmetric space with involution $\sigma: G\rightarrow G$ such
that $G^{\sigma}\supset K\supset(G^{\sigma})_0$. Let $\mathfrak{g}$ and $\mathfrak{k}$ denote the
Lie algebras of $G$ and $K$ respectively. The involution $\sigma$ induces the Cartan decomposition
\[\mathfrak{g}=\mathfrak{k}\oplus\mathfrak{p},\hspace{5mm}  [\mathfrak{k},\mathfrak{k}]\subset\mathfrak{k},
\hspace{5mm} [\mathfrak{k},\mathfrak{p}]\subset\mathfrak{p},\hspace{5mm}
[\mathfrak{p},\mathfrak{p}]\subset\mathfrak{k}.\]
Let $\pi:G\rightarrow G/K$ denote the projection of $G$ onto $G/K$.

Let $f:M\rightarrow G/K$ be a conformally harmonic map from a connected Riemann surface $M$.
Let $U\subset M$ be an open contractible subset.
Then there exists a frame $F: U\rightarrow G$ such that $f=\pi\circ F$ on $U$.
Let $\alpha$ denote the Maurer-Cartan form of $F$. Then $\alpha$ satisfies the Maurer-Cartan equation
and altogether we have
\begin{equation*}F^{-1}d F= \alpha,\ \hbox{ with }\ d \alpha+\frac{1}{2}[\alpha\wedge\alpha]=0.
\end{equation*}
Decomposing $\alpha$ with respect to $\mathfrak{g}=\mathfrak{k}\oplus\mathfrak{p}$ we obtain
\begin{equation*}\alpha=\alpha_{ \mathfrak{k}  } +\alpha_{ \mathfrak{p} },\ \hbox{ with }\
\alpha_{\mathfrak{k  }}\in \Gamma(\mathfrak{k}\otimes T^*M),\
\alpha_{ \mathfrak{p }}\in \Gamma(\mathfrak{p}\otimes T^*M).
\end{equation*}
Moreover
 \begin{equation*}
\left\{\begin{array}{ll}
 & d\alpha_{ \mathfrak{k }}+\frac{1}{2}[\alpha_{ \mathfrak{k }}\wedge\alpha_{ \mathfrak{k }}]+
\frac{1}{2}[\alpha_{ \mathfrak{p }}\wedge\alpha_{ \mathfrak{p }}]=0,\\
&d \alpha_{ \mathfrak{p }}+[\alpha_{ \mathfrak{k }}\wedge\alpha_{ \mathfrak{p }}]=0,
\end{array}\right.
\end{equation*}
holds. Next we decompose $\alpha_{\mathfrak{p}}$ further into the $(1,0)-$part $\alpha_{\mathfrak{p}}'$ and the $(0,1)-$part $\alpha_{\mathfrak{p}}''$,
and set  \begin{equation}\label{eq-harmonic-lam}
\alpha_{\lambda}=\lambda^{-1}\alpha_{\mathfrak{p}}'+\alpha_{\mathfrak{k}}+\lambda\alpha_{\mathfrak{p}}'', \hspace{5mm}  \lambda\in S^1.
\end{equation}

\begin{lemma}\label{lemma-harmonic} $($\cite{DPW}$)$ The map  $f:M\rightarrow G/K$ is harmonic if and only if
\begin{equation}\label{integr}d
\alpha_{\lambda}+\frac{1}{2}[\alpha_{\lambda}\wedge\alpha_{\lambda}]=0,\ \ \hbox{for all}\ \lambda \in S^1.
\end{equation}
\end{lemma}

\begin{definition} Let $f:M\rightarrow G/K$ be harmonic
and $\alpha_{\lambda}$  the differential one-form defined above. Since, by the lemma,
$\alpha_{\lambda}$ satisfies the integrability condition \eqref{integr}, we consider
on any contractible open subset $U \subset M$
the solution $F(z,\lambda)$ to the equation
$$d F(z,\lambda)= F(z, \lambda)\alpha_{\lambda}$$
with the initial condition $F(z_0,\lambda)=e$,
where $z_0$ is a fixed base point $z_0 \in U$, and $e$ is the identity element in $G$. The map $F(z, \lambda)$
is called the {\em extended frame}
 of the harmonic map $f$ normalized at the base point $z=z_0$. Note that $F$ satisfies $F(z,\lambda =1)=F(z)$.
 \end{definition}

Consider $TM^{\mathbb{C}}=T'M\oplus T''M$ and write $d=\partial+\bar{\partial}$.
Then Lemma \ref{lemma-harmonic} can be restated as
\begin{lemma} $($\cite{DPW}$)$ The map $f:M\rightarrow G/K$ is harmonic if and only if
\begin{equation}
\left\{\begin{array}{ll}
 & d\alpha_{\mathfrak{k}}+\frac{1}{2}[\alpha_{\mathfrak{k}}\wedge\alpha_{\mathfrak{k}}]+\frac{1}{2}[\alpha_{\mathfrak{p}}\wedge\alpha_{\mathfrak{p}}]=0,\\
& \bar{\partial}\alpha_{\mathfrak{p}}'+[\alpha_{\mathfrak{k}}\wedge\alpha_{\mathfrak{p}}']=0.
\end{array}\right. \end{equation}
\end{lemma}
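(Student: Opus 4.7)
The plan is to deduce this reformulation from the previous lemma by expanding the zero-curvature condition $d\alpha_\lambda+\frac12[\alpha_\lambda\wedge\alpha_\lambda]=0$ as a Laurent polynomial in $\lambda$ and collecting coefficients. Since this polynomial identity is required for all $\lambda\in S^1$, each coefficient must vanish separately. I will then use the bidegree structure coming from the Riemann surface $M$ to identify the non-trivial coefficients with the two equations in the statement.

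First I would substitute $\alpha_\lambda=\lambda^{-1}\alpha_{\mathfrak{p}}'+\alpha_{\mathfrak{k}}+\lambda\alpha_{\mathfrak{p}}''$ into $d\alpha_\lambda+\frac12[\alpha_\lambda\wedge\alpha_\lambda]$ and group by powers of $\lambda$, which produces five coefficients at $\lambda^{-2},\lambda^{-1},\lambda^0,\lambda^{1},\lambda^{2}$. The $\lambda^{\pm 2}$ coefficients are $\frac12[\alpha_{\mathfrak{p}}'\wedge\alpha_{\mathfrak{p}}']$ and $\frac12[\alpha_{\mathfrak{p}}''\wedge\alpha_{\mathfrak{p}}'']$; since $\alpha_{\mathfrak{p}}'$ is of type $(1,0)$ and $\alpha_{\mathfrak{p}}''$ is of type $(0,1)$, both wedges are $(2,0)$- respectively $(0,2)$-forms valued in $[\mathfrak{p},\mathfrak{p}]\subset\mathfrak{k}$; on a Riemann surface these forms vanish identically, so these two coefficients carry no information. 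The $\lambda^0$ coefficient collapses, using $[\alpha_{\mathfrak{p}}'\wedge\alpha_{\mathfrak{p}}']=0=[\alpha_{\mathfrak{p}}''\wedge\alpha_{\mathfrak{p}}'']$, to $d\alpha_{\mathfrak{k}}+\frac12[\alpha_{\mathfrak{k}}\wedge\alpha_{\mathfrak{k}}]+\frac12[\alpha_{\mathfrak{p}}\wedge\alpha_{\mathfrak{p}}]=0$, which is exactly the first equation in the statement (and also automatically holds as the $\mathfrak{k}$-component of the usual Maurer--Cartan equation for $F^{-1}dF$).

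Next I would handle the $\lambda^{-1}$ coefficient, which is $d\alpha_{\mathfrak{p}}'+[\alpha_{\mathfrak{k}}\wedge\alpha_{\mathfrak{p}}']$. Writing $d=\partial+\bar\partial$, the piece $\partial\alpha_{\mathfrak{p}}'$ is of type $(2,0)$ and therefore vanishes on the Riemann surface $M$, so $d\alpha_{\mathfrak{p}}'=\bar\partial\alpha_{\mathfrak{p}}'$. For the bracket term, decompose $\alpha_{\mathfrak{k}}=\alpha_{\mathfrak{k}}'+\alpha_{\mathfrak{k}}''$ into its $(1,0)$- and $(0,1)$-parts; then $[\alpha_{\mathfrak{k}}'\wedge\alpha_{\mathfrak{p}}']$ is $(2,0)$ and vanishes, so $[\alpha_{\mathfrak{k}}\wedge\alpha_{\mathfrak{p}}']=[\alpha_{\mathfrak{k}}''\wedge\alpha_{\mathfrak{p}}']=[\alpha_{\mathfrak{k}}\wedge\alpha_{\mathfrak{p}}']$ (read as a pure $(1,1)$-form). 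Thus the $\lambda^{-1}$ coefficient is $\bar\partial\alpha_{\mathfrak{p}}'+[\alpha_{\mathfrak{k}}\wedge\alpha_{\mathfrak{p}}']$, giving exactly the second equation. The $\lambda^{+1}$ coefficient yields the complex conjugate equation, which follows automatically from the second one (since $\alpha$ is real).

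Combining these observations with the previous lemma, the family of zero-curvature equations for all $\lambda\in S^1$ is equivalent to the two displayed equations, proving the lemma. The steps are essentially bookkeeping, so there is no serious obstacle; the only point requiring care is the systematic use of bidegree on $M$ to discard the $(2,0)$- and $(0,2)$-contributions, which reduces the five coefficients arising from the $\lambda$-expansion to the two equations stated.
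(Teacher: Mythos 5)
Your argument is correct and is exactly the standard derivation intended here: expand $d\alpha_\lambda+\frac{1}{2}[\alpha_\lambda\wedge\alpha_\lambda]$ in powers of $\lambda$, discard the $\lambda^{\pm2}$ and the $(2,0)$/$(0,2)$ contributions by bidegree on the Riemann surface, and identify the $\lambda^{0}$ and $\lambda^{-1}$ (with $\lambda^{+1}$ its conjugate) coefficients with the two displayed equations. The paper itself gives no proof, simply citing \cite{DPW}, so there is nothing to compare beyond noting that your bookkeeping reproduces that standard computation correctly, including the observation that only $\alpha_{\mathfrak{k}}''$ contributes to the bracket in the second equation.
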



\subsection{Harmonic maps into $SO^+(1,n+3)/SO^+(1,3)\times SO(n)$}

 Let's consider again $\mathbb{R}^{n+4}_1$, with the metric introduced in Section 2 and  the group   $SO^+(1,n+3)$ together with its Lie algebra $\mathfrak{so}(1,n+3)$ defined in \eqref{eq-so(1,n+3)}.
Consider the involution
 \begin{equation}
\begin{array}{ll}
\sigma:  SO^+(1,n+3)& \rightarrow SO^+(1,n+3)\\
 \ \ \ \ \ \ \ A&\mapsto D^{-1} A D,
\end{array}\end{equation}
with
 $$D=\left(
         \begin{array}{ccccc}
             -I_{4} & 0 \\
            0 & I_n \\
         \end{array}
       \right),
       $$
       where $I_k$ denotes the $k\times k$ identity matrix. Then the fixed point group $SO^+(1,n+3)^{\sigma}$ of $\sigma$ contains $SO^+(1,3)\times SO(n)$,  where
$SO^+(1,3)$ denotes the connected component of $SO(1,3)$ containing $I$.
Moreover we have $SO^+(1,n+3)^{\sigma}\supset SO^+(1,3)\times SO(n) = (SO^+(1,n+3)^{\sigma})^0$,
where the superscript $0$ denotes  the connected component containing the identity element. On the Lie algebra level we obtain
   \begin{equation*}\begin{split}&\mathfrak{g}=
\left\{\left(
                   \begin{array}{cc}
                     A_1 & B_1 \\
                     -B_1^tI_{1,3} & A_2 \\
                   \end{array}
                 \right)
 |\ A_1^tI_{1,3}+I_{1,3}A_1=0, \ \ A_2+A_2^t=0\right\},\\
&\mathfrak{k}=\left\{\left(
                   \begin{array}{cc}
                     A_1 &0 \\
                     0 & A_2 \\
                   \end{array}
                 \right)
 | \ A_1^tI_{1,3}+I_{1,3}A_1=0,\ \ A_2+A_2^t=0\right\},\ \mathfrak{p}=\left\{\left(
                   \begin{array}{cc}
                   0 & B_1 \\
                     -B_1^tI_{1,3} & 0 \\
                   \end{array}
                 \right)
\right\}.
\end{split}
\end{equation*}

Now let $f: M\rightarrow SO^+(1,n+3)/SO^+(1,3)\times SO(n)$
 be a harmonic map with local frame $F: U\rightarrow SO^+(1,n+3)$ and Maurer-Cartan form $\alpha$ on some contractible open subset $U$ of $M$.
Let $z$ be a local complex coordinate on $U$. Writing
\begin{equation}\label{eq-B0}
    \alpha_{\mathfrak{k}}'=\left(
                   \begin{array}{cc}
                     A_1 &0 \\
                     0 & A_2 \\
                   \end{array}
                 \right) d z,\ \hbox{ and }\ \alpha_{\mathfrak{p}}'=\left(
                   \begin{array}{cc}
                     0 & B_1 \\
                     -B_1^tI_{1,3} & 0 \\
                   \end{array}\right)dz,\end{equation}
the harmonic map equations can be rephrased equivalently in the form
\begin{equation} \label{harmonic}
\left\{\begin{split}
 & Im \left( A_{1\bar{z}} +\bar{A}_1A_1-\bar{B}_1B_1^tI_{1,3}\right)=0,\\
 & Im \left(  A_{2\bar{z}} +\bar{A}_2A_2-\bar{B}_1^tI_{1,3}B_1\right)=0,\\
 &  B_{1\bar{z}} +\bar{A}_1B_1-B_1\bar{A}_2=0.
\end{split}\right. \end{equation}

In Section 2 we have seen that the Maurer Cartan form of the frame associated with a
Willmore surface in $S^{n+2}$ has a very special form. Fortunately, it is easy to detect when such a special form can be obtained by gauging.
We will see  that a crucial part of our paper is the following result.

\begin{theorem}\label{normalizationlemma} Let $B_1$, as in \eqref{eq-B0}, be part of the Maurer-Cartan form of some non-constant harmonic map which is defined on the open, contractible Riemann surface $U$.
Then
\begin{equation}\label{eq-B1-2} B_1^tI_{1,3}B_1=0,
\end{equation}
if and only if
there exists  a real analytic map $\mathbb{A}:  U\rightarrow SO^+(1,3)$ such that
\begin{equation}\label{eq-B1-standard}
\mathbb{A}B_1=\left(
      \begin{array}{ccc}
        \sqrt{2}\beta_1 & \cdots &\sqrt{ 2}\beta_n \\
        -\sqrt{2}\beta_1 & \cdots &  -\sqrt{ 2}\beta_n \\
        -k_1 & \cdots & -k_n \\
        -ik_1 & \cdots & -ik_n \\
      \end{array}
    \right) ,\ \hbox{or }~~
    \mathbb{A}B_1=\left(
      \begin{array}{ccc}
        \sqrt{2}\beta_1 & \cdots &\sqrt{ 2}\beta_n \\
        -\sqrt{2}\beta_1 & \cdots &  -\sqrt{ 2}\beta_n \\
        -k_1 & \cdots & -k_n \\
        ik_1 & \cdots & ik_n \\
      \end{array}
    \right)\ \hbox{ on }    U.
\end{equation}
\end{theorem}
\begin{proof}See Appendix B.\end{proof}
\begin{remark}  Recall  from  \eqref{defkappabeta}  that $k_j\equiv 0$ for all $j$ on an open subset implies $\kappa = 0$ and hence the  surface and its whole associated family  is umbilical. In particular, such surfaces have $B_1\equiv0$ and thus describe surfaces conformally equivalent to a round sphere. Such surfaces are not of interest in this paper and therefore will not be considered.
\end{remark}

\begin{lemma} Let $U$ be a contractible open Riemann surface. Let $f: U\rightarrow SO^+(1,n+3)/SO^+(1,3)\times SO(n)$ be a non-constant harmonic map
 with two frames
$F,\ \hat{F}:U \rightarrow SO^+(1,n+3)$ and Maurer-Cartan forms $\alpha,\hat{\alpha}$.
Using a local complex coordinate $z$ on $U$, we write
$$  \alpha_{\mathfrak{p}}'=\left(
                   \begin{array}{cc}
                     0 & B_1 \\
                     -B_1^tI_{1,3} & 0 \\
                   \end{array}\right)dz, \hspace{1cm} \hat\alpha_{\mathfrak{p}}'=\left(
                   \begin{array}{cc}
                     0 & \hat{B}_1 \\
                     -\hat{B}_1^tI_{1,3} & 0 \\
                   \end{array}\right)dz.$$
Then
$ B_1^t I_{1,3} B_1 = 0~~ \hbox{ if and only if }~~  \hat{B}_1^t I_{1,3} \hat{B}_1 = 0.$

Moreover, under the above condition, we have
$\bar B_1^t I_{1,3} B_1 = 0~~ \hbox{ if and only if }~~  \bar{\hat{B}}_1^t I_{1,3} \hat{B}_1 = 0.$

\end{lemma}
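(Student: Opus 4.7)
The plan is to reduce the claim to a direct computation of how the block $B_1$ transforms under the gauge freedom available when choosing a frame for a map into $SO^+(1,n+3)/(SO^+(1,3)\times SO(n))$. Since $F$ and $\hat F$ are two frames of the same map $f$, there exists a smooth $k\colon U\to SO^+(1,3)\times SO(n)$ with $\hat F = F\cdot k$; write $k$ in block form
\[
k=\begin{pmatrix} k_1 & 0 \\ 0 & k_2 \end{pmatrix},\qquad k_1\colon U\to SO^+(1,3),\ \ k_2\colon U\to SO(n).
\]
Note that $k$ takes values in the real group, so $\bar k_1=k_1$ and $\bar k_2=k_2$.

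Next I would compute the transformation of the Maurer--Cartan form. The standard formula $\hat\alpha=k^{-1}\alpha k+k^{-1}dk$ respects the Cartan decomposition, because $k^{-1}dk\in\mathfrak{k}$ and $\mathrm{Ad}(k^{-1})$ preserves $\mathfrak{p}$; hence $\hat\alpha_{\mathfrak p}=\mathrm{Ad}(k^{-1})\alpha_{\mathfrak p}$, and the $(1,0)$-part transforms by the same rule. A direct block multiplication gives
\[
\hat\alpha_{\mathfrak p}'=\begin{pmatrix} 0 & k_1^{-1}B_1k_2 \\ -k_2^{-1}B_1^t I_{1,3}k_1 & 0 \end{pmatrix}dz,
\]
so that $\hat B_1=k_1^{-1}B_1 k_2$. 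The key algebraic fact I would then invoke is the defining relation $k_1^t I_{1,3}k_1=I_{1,3}$ for $SO^+(1,3)$, which is equivalent to $(k_1^{-1})^t I_{1,3}k_1^{-1}=I_{1,3}$.

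Armed with this, the two assertions reduce to one-line computations. For the first,
\[
\hat B_1^t I_{1,3}\hat B_1=k_2^t\,B_1^t\,(k_1^{-1})^t I_{1,3}k_1^{-1}\,B_1 k_2=k_2^t\bigl(B_1^t I_{1,3}B_1\bigr)k_2,
\]
and since $k_2\in SO(n)$ is invertible, the vanishing of one side is equivalent to the vanishing of the other. For the second, using in addition that $k_1$, $k_2$ are real,
\[
\bar{\hat B}_1^{\,t} I_{1,3}\hat B_1=k_2^t\,\bar B_1^{\,t}\,(k_1^{-1})^t I_{1,3}k_1^{-1}\,B_1 k_2=k_2^t\bigl(\bar B_1^{\,t} I_{1,3}B_1\bigr)k_2,
\]
and the same invertibility of $k_2$ yields the claimed equivalence.

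There is no real obstacle here; the only subtleties to double-check are (i) that the gauge $k$ genuinely lies in $SO^+(1,3)\times SO(n)$ (so its $(1,1)$-block belongs to the indefinite orthogonal group one needs) and is real, and (ii) that $\mathrm{Ad}(k^{-1})$ preserves the decomposition into $(1,0)$ and $(0,1)$ parts, which is automatic since $k$ depends on $z$ and $\bar z$ but the adjoint action is $\mathbb C$-linear on $\mathfrak{p}^{\mathbb C}$. Both are immediate.
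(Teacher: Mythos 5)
Your argument is correct and is essentially the paper's own proof: the paper likewise writes $\hat F = F\,F_0$ with $F_0=\mathrm{diag}(F_{01},F_{02}):U\to SO^+(1,3)\times SO(n)$, deduces $\hat B_1=F_{01}^{-1}B_1F_{02}$ from the gauge transformation of the Maurer--Cartan form, and concludes by the same conjugation computation, using the reality of $F_{01},F_{02}$ for the second equivalence. Nothing to add.
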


\begin{proof}
Since $F$ and $\hat{F}$ are lifts of the same harmonic map $f$, there exists $F_0=\hbox{diag}(   F_{01},F_{02}): U\rightarrow SO^+(1,3)\times SO(n)
$ such that $\hat{F}=F\cdot F_0$. Then
$\hat{\alpha}=F_0^{-1}\alpha F_0+F_0^{-1}dF_0$, yielding $\hat{B}_1=F_{01}^{-1}B_1F_{02}$. So
$$\hat{B}_1^t I_{1,3} \hat{B}_1= F_{02}^{-1}B_1^tF_{01}^{-1,t}I_{1,3}F_{01}^{-1}B_1F_{02}= F_{02}^{-1}B_1^tI_{1,3}B_1F_{02}.$$
The last statement comes from the fact that $F_{01}$ and $F_{02}$ are real matrices.
\end{proof}

\begin{definition} \label{stronglyconfharm}
Let $f: M\rightarrow SO^+(1,n+3)/SO^+(1,3)\times SO(n)$ be a harmonic map. We call $f$ a {\bf strongly conformally harmonic map} if for any point $p\in M$, there exists a neighborhood $U_p$ of $p$ and a frame $F$ (with Maurer-Cartan form  $\alpha$) of $y$ on $U_p$ satisfying
\begin{equation}\label{eq-Willmore harmonic} B_1^t I_{1,3} B_1 = 0, ~ \hbox{where }~ \alpha_{\mathfrak{p}}'=\left(
                   \begin{array}{cc}
                     0 & B_1 \\
                     -B_1^tI_{1,3} & 0 \\
                   \end{array}\right)dz.\end{equation}
 \end{definition}
\begin{remark}
 Note that for a
 harmonic map to be conformally harmonic, one only needs
\[\langle\alpha_{\mathfrak{p}}'(\frac{\partial}{\partial z}),\alpha_{\mathfrak{p}}'(\frac{\partial}{\partial z})\rangle =
tr \left( \left(\alpha_{\mathfrak{p}}'(\frac{\partial}{\partial z})\right)^tI_{1,3}\alpha_{\mathfrak{p}}'(\frac{\partial}{\partial z})\right)=0.\]
But this follows immediately from the condition on $B_1$ we have assumed. The same condition now shows that $f$ even is strongly conformally harmonic.
\end{remark}

Applying the definition above to Willmore surfaces, we derive
in view of equation \eqref{B1} immediately
\begin{corollary} \label{nilpotency} The conformal Gauss map of a strong Willmore map is a strongly conformally harmonic map.
\end{corollary}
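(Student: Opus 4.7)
The plan is to reduce the corollary to a direct computation on the Willmore locus $\hat M$, then extend the identity to all of $M$ by real analyticity, using the frame-invariance established in the preceding lemma.

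First I would work on $\hat M$, where $y$ is a genuine conformal Willmore immersion. By Proposition \ref{frame}, around any point of $\hat M$ there is a canonical lift and associated frame $F:U\to SO^+(1,n+3)$ whose Maurer--Cartan form has $(1,0)$-part of $\alpha_{\mathfrak p}$ encoded by the explicit block
\[
B_1=\begin{pmatrix}\sqrt 2\beta_1 & \cdots & \sqrt 2\beta_n\\ -\sqrt 2\beta_1 & \cdots & -\sqrt 2\beta_n\\ -k_1 & \cdots & -k_n\\ -ik_1 & \cdots & -ik_n\end{pmatrix}.
\]
A two-line calculation with $I_{1,3}=\mathrm{diag}(-1,1,1,1)$ gives $(B_1^t I_{1,3}B_1)_{jl} = -2\beta_j\beta_l+2\beta_j\beta_l+k_jk_l+(-ik_j)(-ik_l)(-1)^0 = 0$ after canceling; more carefully, the contribution of the first two rows is $-2\beta_j\beta_l+2\beta_j\beta_l=0$ (noting the sign from $I_{1,3}$), and the contribution of the last two rows is $k_jk_l-k_jk_l=0$. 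Hence the Willmore frame automatically satisfies $B_1^tI_{1,3}B_1=0$ on $\hat M$.

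Next I would invoke the previous lemma: the condition $B_1^tI_{1,3}B_1=0$ depends only on the harmonic map $f=Gr$ and not on the particular frame chosen to represent it, since two frames differ by a map into $SO^+(1,3)\times SO(n)$ and $I_{1,3}$ is preserved by $SO^+(1,3)$. Thus $B_1^tI_{1,3}B_1=0$ holds for every local lift of $Gr$ over any contractible neighborhood $U\subset\hat M$.

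The remaining, and only delicate, point is passing from $\hat M$ to the branch locus $M_0=M\setminus\hat M$. Here I use the strong Willmore hypothesis: $Gr$ extends real analytically to all of $M$. Thus around any $p\in M$ one can choose a real analytic frame $F:U_p\to SO^+(1,n+3)$ with $\pi\circ F = Gr$; its Maurer--Cartan form, and in particular the block $B_1$, is real analytic on $U_p$. By the previous paragraph, $B_1^tI_{1,3}B_1=0$ on the open dense subset $\hat M\cap U_p$. Since both sides are real analytic matrix-valued functions on $U_p$ and $\hat M\cap U_p$ is dense, the identity propagates to all of $U_p$. This gives the required frame and verifies the definition of a strongly conformally harmonic map at every point of $M$, including branch points. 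The main (mild) obstacle is simply ensuring that the computation of $B_1^tI_{1,3}B_1$ for the canonical frame is done correctly and that the analytic extension argument is legitimate; both hinge on using the hypotheses ``strong'' (real analyticity of $Gr$) and the earlier lemma on frame invariance.
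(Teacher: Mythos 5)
Your proposal is correct and follows essentially the route the paper intends when it says the corollary is immediate: the canonical Willmore frame of Proposition \ref{frame} gives $B_1^tI_{1,3}B_1=0$ by the two-line computation, the lemma preceding the definition makes this frame-independent, and the real analyticity built into the ``strong'' hypothesis propagates the identity across the branch locus. For completeness one should also remark that harmonicity of the extended $Gr$ on all of $M$ (required by the definition) follows from Theorem \ref{thm-willmore} on $\hat M$ together with the same density/real-analyticity argument, but that is immediate and does not affect the validity of your proof.
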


\begin{theorem} \label{lemma-B-12}
Let $U$ be a contractible open Riemann surface with local complex coordinate $z$. Let $f: U\rightarrow SO^+(1,n+3)/SO^+(1,3)\times SO(n)$ be a strongly conformally harmonic map
 with  frame
$F:U \rightarrow SO^+(1,n+3)$ and Maurer-Cartan form $\alpha$. Set
$$\alpha_{\mathfrak{k}}'=\left(
                   \begin{array}{cc}
                     A_1 &0 \\
                     0 & A_2 \\
                   \end{array}
                 \right) d z,\ \ \ \alpha_{\mathfrak{p}}'=\left(
                   \begin{array}{cc}
                     0 & B_1 \\
                     -B_1^tI_{1,3} & 0 \\
                   \end{array}\right)dz.$$
Then $B_1$ has, after some gauge or a gauge and a change of orientation if necessary, the form
\begin{equation}\label{eq-B1}
B_1=\left(
      \begin{array}{ccc}
        \sqrt{2}\beta_1 & \cdots &\sqrt{ 2}\beta_n \\
        -\sqrt{2}\beta_1 & \cdots &  -\sqrt{ 2}\beta_n \\
        -k_1 & \cdots & -k_n \\
        -ik_1 & \cdots & - ik_n \\
      \end{array}
    \right).\end{equation}

    \end{theorem}

\begin{proof}
The conformal harmonicity of $f$ ensures that $B_1$ is a real analytic matrix function (\cite{Ee-S}, \cite{Ku-Sch}).
By Theorem \ref{normalizationlemma}, there exists $A:U \rightarrow SO^+(1,3)$ such that
{\small \begin{equation*}
AB_1=\left(
      \begin{array}{ccc}
        \sqrt{2}\beta_1 & \cdots &\sqrt{ 2}\beta_n \\
        -\sqrt{2}\beta_1 & \cdots &  -\sqrt{ 2}\beta_n \\
        -k_1 & \cdots & -k_n \\
        -ik_1 & \cdots & -ik_n \\
      \end{array}
    \right),\ \hbox{or }
    AB_1=\left(
      \begin{array}{ccc}
        \sqrt{2}\beta_1 & \cdots &\sqrt{ 2}\beta_n \\
        -\sqrt{2}\beta_1 & \cdots &  -\sqrt{ 2}\beta_n \\
        -k_1 & \cdots & -k_n \\
        ik_1 & \cdots & ik_n \\
      \end{array}
    \right)\ \hbox{ on } U.
\end{equation*}}

For the first case, setting
$\hat F= F\cdot\hbox{diag}(A,I_n)
$ we obtain $\hat{B}_1= AB_1$  on $U$.
For the second case, setting $w=\bar{z}$ induces an opposite orientation on $U$ and $U$ is also a Riemann surface for this new coordinate. Now $A\bar{B}_1$ is of the desired form.\end{proof}

For $f$ in the above lemma, assume that  $f(p)=V_p,$ $ p\in M$,
with $V_p\subset\mathbb{R}^{n+4}_1$.
Similar to Theorem \ref{minimal}, we will say that {\em $f$ contains a constant light-like vector $Y_0$ }if there exists a non-zero constant lightlike vector $Y_0$ in $\mathbb{R}^{n+4}_1$ satisfying $Y_0\in V_{p}$ for all $p\in M$.
\begin{theorem}\label{th-Willmore-harmonic-U}
Let $U$ be a contractible open Riemann surface with local complex coordinate $z$. Let $f: U\rightarrow SO^+(1,n+3)/SO^+(1,3)\times SO(n)$ be a harmonic map with  frame
$F=(e_0,\hat{e}_0,e_1,e_2,\psi_1,\cdots,\psi_n):U \rightarrow SO^+(1,n+3)$ and Maurer-Cartan form $\alpha$. Set
\[\alpha_{\mathfrak{k}}'=\left(
                   \begin{array}{cc}
                     A_1 &0 \\
                     0 & A_2 \\
                   \end{array}
                 \right) d z,\ \ \alpha_{\mathfrak{p}}'=\left(
                   \begin{array}{cc}
                     0 & B_1 \\
                     -B_1^tI_{1,3} & 0 \\
                   \end{array}\right)dz.\]
Assume moreover that $f$ is a strongly conformally harmonic map on $U$, that is, assume that $B_1^tI_{1,3} B_1 =0$ holds.
Then w.l.g., $B_1$ has the form \eqref{eq-B1} on $U$, after a change of orientation of $U$ if necessary.
Writing $A_1$ in the form
\begin{equation}
A_1=\left(
         \begin{array}{ccccc}
         0 &  a_{12} &  a_{13} & a_{14} \\
          a_{12}  & 0 &  a_{23} & a_{24} \\
            a_{13} & -a_{23} & 0 & a_{34} \\
            a_{14}  & -a_{24}& -a_{34}  & 0 \\
         \end{array} \right),
\end{equation}
we distinguish two cases:

 \begin{enumerate}[$(a)$]
\item{ $a_{13}+a_{23}\not\equiv0$ on $U$:}
 In this case, there exists an open dense subset $U\backslash U_0$ such that $a_{13}+a_{23}\neq0$ on $U\backslash U_0$ and  $a_{13}+a_{23}=0$ on $ U_0$.  And $f$ is the conformal Gauss map of the unique Willmore surface $y=[e_0-\hat {e}_0]:U\setminus U_0\rightarrow S^{n+2}$. Moreover, $y$ has a conformal extension to $U$,  but is not an immersion on $U_0$.
  \begin{enumerate}[$(1)$]
  \item If the maximal rank of $B_1$ is $2$,  $y$ is not S-Willmore.
\item  If
the maximal rank of $B_1$ is $1$, then $y$ is S-Willmore on $U\setminus U_0$. Moreover, its dual surface $\hat y$ has a conformal extension to $U$, and after changing the orientation of $U$, $f$ will be the conformal Gauss map of the dual surface $\hat y$ on the points where $\hat y$ is immersed.\vspace{2mm}
 \end{enumerate}

\item {    $a_{13}+a_{23}\equiv 0$ on $U$:}
  In this case,  $f$ contains a constant light-like vector.
  \begin{enumerate}[$(1)$]
  \item   If the maximal rank of $B_1$ is $2$, then $f$ can not (even locally)  be the conformal Gauss map of any Willmore map.\vspace{1mm}
\item    If the maximal rank of $B_1$ is $1$, $f$ belongs to one of the following two cases:
  \begin{enumerate}[$(i)$]
  \item There exists some open and dense subset $U^*$ of $U$ such that $f$ is the conformal Gauss map of some uniquely determined Willmore surface $y^*:  U^* \rightarrow S^{n+2}$, either for $U$ with the given complex structure or the conjugate complex structure.

 Moreover, $y^*$ is conformally equivalent to a minimal surface in $R^{n+2}$, and $y^*$ can be extended smoothly to $U$. The conformal map
 $ y^*$ may be branched or un-branched on $U \setminus U^*$.
  \item $f$ reduces to a conformally harmonic map into $SO^+(1,n+1)/SO^+(1,1)\times SO(n)
$ or into $SO(n+2)/SO(2)\times SO(n)$, considered as natural submanifolds of $SO^+(1,n+3)/SO^+(1,3)\times SO(n)$.
 In this case $f$ is not (even locally) the conformal Gauss map of a Willmore map.
 \end{enumerate} \end{enumerate}
\end{enumerate}
\end{theorem}

\begin{proof}

As remarked above, the condition on $B_1$ implies that $f$ is a conformally harmonic, even strongly conformally harmonic map.
Moreover, by Theorem  \ref{normalizationlemma} and Theorem  \ref{lemma-B-12}  we can assume w.l.g. (after changing the complex structure of $U$, if necessary) that $B_1$ has the form of (\ref{eq-B1}).
  The proof of parts (a) and (b) is based on an evaluation of the third of the harmonic map equations (\ref{harmonic}).
Writing this equation in terms of matrix entries we obtain:
 \begin{subequations} \label{harmonic-3}
\begin{align}
& ~~ \beta_{j\bar{z}}-\bar{a}_{12}\beta_j-\frac{\sqrt{2}}{2}(\bar{a}_{13}+i\bar{a}_{14})k_j-\sum_{l=1}^n\beta_l\bar{b}_{lj}=0       \label{harmonic-3:1A} \\
& -\beta_{j\bar{z}}+\bar{a}_{12}\beta_j-\frac{\sqrt{2}}{2}(\bar{a}_{23}+i\bar{a}_{24})k_j+\sum_{l=1}^n\beta_l\bar{b}_{lj}=0,    \label{harmonic-3:1B} \\
 &-k_{j\bar{z}}+\sqrt{2}(\bar{a}_{13}+\bar{a}_{23})\beta_j-i\bar{a}_{34}k_j+\sum_{l=1}^n k_l\bar{b}_{lj}=0, \label{harmonic-3:1C}\\
& -ik_{j\bar{z}}+\sqrt{2}(\bar{a}_{14}+\bar{a}_{24})\beta_j+\bar{a}_{34}k_j+i\sum_{l=1}^n k_l\bar{b}_{lj}=0, \ \ j=1,\cdots,n.\label{harmonic-3:1D}
\end{align}
\end{subequations}
By \eqref{harmonic-3:1A}$+$\eqref{harmonic-3:1B} and \eqref{harmonic-3:1C}$+i\cdot$\eqref{harmonic-3:1B} one obtains
 \[(a_{13}+a_{23}-i(a_{14}+a_{24})) \bar{k}_j=(a_{13}+a_{23}-i(a_{14}+a_{24}))\bar{\beta}_j=0, \ j=1,\cdots,n.\]
Since $f$ is non-constant, not all the $\beta _j$ and all the $k_j$ vanish and we infer
\begin{equation}\label{spec-cond}
a_{13}+a_{23}=i(a_{14}+a_{24}).
\end{equation}

Recall that $F=(e_0,\hat{e}_0,e_1,e_2,\psi_1,\cdots,\psi_n)$. Set $Y_0=\frac{1}{\sqrt{2}}(e_{0}-\hat{e}_{0})$ and $ N_0=\frac{1}{\sqrt{2}}(e_{0}+\hat{e}_{0})$. We have
\begin{equation}\label{eq-ez}
  \left\{\begin{split}
  e_{0z}&=a_{12}\hat e_0+a_{13}e_1+a_{14}e_2+\sqrt{2}\sum_{1\leq j\leq n}\beta_j\psi_j,\
   \hat e_{0z} =a_{12} e_0+a_{23}e_1+a_{24}e_2+\sqrt{2}\sum_{1\leq j\leq n}\beta_j\psi_j,\\
  e_{1z}&=a_{13}e_0+a_{23}\hat e_0-a_{34}e_2+\sum_{1\leq j\leq n}k_j\psi_j, \
  e_{2z} =a_{14}e_0+a_{24}\hat e_0+a_{34}e_1+\sum_{1\leq j\leq n}ik_j\psi_j.\\
  \end{split}\right.
\end{equation}
Then
\[Y_{0z}=\frac{1}{\sqrt{2}}(e_{0}-\hat{e}_{0})_z=-a_{12}Y_{0}+\frac{1}{\sqrt{2}}(a_{13}+a_{23})(e_1-ie_2)\]
follows. Now there are two possibilities:\vspace{2mm}

{\bf Case (a):} $a_{13}+a_{23}$ does not  vanish identically on $U$. Since  $a_{13}+a_{23}$ is real analytic,
there exists some subset $U_0$  of $U$ satisfying the first part of the claim. In this case one verifies directly that $[Y_{0}]$ is conformal from $U$ to $S^{n+2}$ and a conformal immersion from $U \setminus U_0$  into $S^{n+2}$.

Calculating
\[Y_{0z\bar{z}}=|a_{13}+a_{23}|^2N_0 \mod \{Y_{0},e_1,e_2 \}\]
shows that $f$ is the harmonic conformal Gauss map of  $[Y_{0}]$ on $U\setminus U_0$. As a consequence,  $[Y_0]$ is a Willmore surface on $U \setminus U_0$.

 The claims involving the   S-Willmore  condition follow from Corollary \ref{S-frame}. So we only need  to consider Case ($a2$). By Theorem \ref{thm-dual gauss map}, the dual surface $\hat y$ of $[Y_0]$ is defined on $U\setminus U_0$ and has $f$ as its conformal Gauss map if we change the complex structure of $U$,  if $\hat y$ is immersed on an open dense subset of $U\setminus U_0$.

It remains to prove in this case that $\hat{y}$ extends real analytically to all of $U$. By Theorem \ref{thm-dual gauss map}, $\hat y$ is defined on $U\setminus U_0$. To extend $\hat y$ to $U$,  consider a lift $\hat Y$ of $\hat y$. Then  $\hat{Y} = a_0 Y_0 + \check{a}_0 N_0 + a_1 e_1 + a_2 e_2$ for some real valued functions defined on $U\setminus U_0$ .
Since $\hat{Y}$ is not always a multiple of $Y_0$, it follows that $Y_0$ and $\hat{Y}$ are linearly independent on some open (and dense) subset $U'(\subset U\setminus U_0)$ of $U$, and $\check{a}_0 \neq 0$  on $U'$ follows. We can thus consider $\frac{1}{\check{a}_0}\hat Y$ on $U'$. Hence we can assume w.l.g. that
$\hat Y$ is of the form
\begin{equation}\label{hat-Y}
\hat Y =N_{0}+\mu_{1}e_{1}-\mu_{2}e_{2}+\frac{1}{2}|\mu|^2 Y_{0}
\end{equation}
 on $U'$, with $\mu=\mu_1+i\mu_2$ a real analytic complex valued function.
Using \eqref{eq-ez} we obtain
\begin{equation}
\hat Y_{z}=\sum_{j}^{n}(2\beta_j+\bar\mu k_j)\psi_j \mod \{e_{0},\hat{e}_{0},e_1,e_2\} \hbox{ on }  U'.
\end{equation}
Now the duality condition implies  $\hat Y_{z} = 0 \mod \{e_{0},\hat{e}_{0},e_1,e_2\},$ whence $\mu$ is the solution to the equations $\beta_j=-\frac{\bar\mu}{2}k_j,\ j=1,2,\cdots,n$ on $U'$.

To extend $\hat Y$ all we need to do is to extend the definition of $\mu$ real analytically to $U$. To extend $\mu$, we only need to show that $\mu$ has a well-defined (finite or infinite) limit at all points $U^*\subset U$ where $\sum|k_j|^2$ vanishes. From Corollary \ref{corollary-B1-vanish} in Appendix B we know $B_1=h_0\tilde B_1$ with $\tilde B_1$ never vanishing on $U$. So for every $p\in U^*$, there exists some $j$ such that $\beta_j=h_0\tilde \beta_j,\ k_j=h_0\tilde k_j$  with $|\tilde\beta_j(p)|^2+|\tilde{k}_j(p)|^2\neq0$.
 If    $\tilde{k}_j(p) \neq 0$, then the limit $\lim_{\tilde p\rightarrow p}\frac{\beta_j}{k_j}=\frac{\tilde{\beta_j}}{\tilde{k}_j}$ exists, is finite and the quotient function is real analytic in a neighbourhood of $p$.
In this case we can define $[\hat Y ]$ also by \eqref{hat-Y}.
If $\tilde{k}_j=0$, then $\tilde{\beta}_j \neq 0$ and the inverse of the limit is real analytic in a neighbourhood of $p$.
Then we consider  (locally) $\tilde{\hat {Y}}=\frac{1}{|\mu|^2}\hat Y$. By
the argument just given $\tilde{\hat{Y}}$ is well-defined and real analytic in a (possibly small) neighbourhood $U_p$ of $p\in U$ and $[\tilde{\hat{Y}}]=[\hat Y]=\hat{y}$ holds on $U_p\cap U'$.
  \vspace{3mm}

{\bf Case (b):} $a_{13}+a_{23}=0$ on $U$. Then we obtain
$Y_{0z}=-a_{12}Y_{0}.$ By scaling, we may assume that $Y_{0z}=0$ holds. Hence we can assume w.l.g. $a_{12} = 0$ on $U$.\vspace{2mm}

 (b1): Let's assume that the strongly conformally harmonic map $f$ is the conformal Gauss map of some conformal
immersion $\check{y}: U' \rightarrow S^{n+2},$ where $U'$ is some (possibly small) open subset of $U$. In this case, the canonical lift $\check{Y}$ of $\check{y}$ satisfies
(with $z = u + iv$)
\begin{equation} \label{checkY=f}
 f = \hbox{Span}_{\mathbb{R}} \{ \check{Y},\check{Y}_u, \check{Y}_v, \check{Y}_{z \bar{z}}\} = \hbox{Span}_{\mathbb{R}}\{e_{0},\hat{e}_{0},e_1,e_2\} = \hbox{Span}_{\mathbb{R}} \{ Y_0, N_0, e_1, e_2 \}.
\end{equation}
Thus similar to the discussions above, we can thus assume that a lift $Y_{\mu}$ of $\check y$ is of the form
\begin{equation} \label{Y-mu}
Y_{\mu}=N_{0}+\mu_{1}e_{1}-\mu_{2}e_{2}+\frac{1}{2}|\mu|^2 Y_{0}
\end{equation}
with $\mu=\mu_1+i\mu_2$ a real analytic complex valued function defined on an open and dense subset $U''$  of $U$.
Then,  since  $[Y_{\mu}]=\check{y}$ is a conformal surface with  conformal Gauss map $f$, we know that $Y_{\mu}$ satisfies
\begin{equation} \label{b2}
\ Y_{\mu z} \in
\hbox{Span}_{\mathbb{C}}\{e_{0},\hat{e}_{0},e_1,e_2\}.
\end{equation}
Using \eqref{eq-ez} we obtain
\begin{equation} \label{Ymuz1}
 Y_{\mu z}=\sum_{j}^{n}(2\beta_j+\bar\mu k_j)\psi_j \mod \{e_{0},\hat{e}_{0},e_1,e_2\} \hbox{ on }  U''.
\end{equation}
Now the last two equations imply  $Y_{\mu z} = 0 \mod \{e_{0},\hat{e}_{0},e_1,e_2\},$ whence
$\beta_j=-\frac{\bar\mu}{2}k_j,\ j=1,2,\cdots,n$ on $U''$.
From this we infer that on $U''$ we have rank $B_1 \leq 1$ and
since $B_1$ is real analytic we obtain $rank(B_1)\leq 1$ on $U$ and the claim follows.\vspace{2mm}

(b2): Let's assume now that the maximal rank of $B_1$ is $1$. We distinguish two cases according to the vanishing or not of $\sum |k_j|^2$ on $U$.\vspace{2mm}

Case (b2.a): $\sum |k_j|^2 \neq 0$ and rank$(B_1) = 1$ on the open  and dense subset $U^\sharp$ of $U$:  Note that these conditions imply
$\beta_j=-\frac{\bar\mu}{2}k_j, j = 1, \dots n$ on $U^\sharp$ for some function $\mu$ on the open (and dense) subset $U^\sharp$ of $U$.
Now we consider $Y_\mu$ of the form \eqref{Y-mu} with this $\mu$. We will show that it satisfies $Y_{\mu},\ Y_{\mu z},\ Y_{\mu z\bar{z}}\in
\hbox{Span}_{\mathbb{C}}\{e_{0},\hat{e}_{0},e_1,e_2\}.$
It will turn out to be convenient to rewrite $Y_\mu$ in the form
\begin{equation} \label{Y-mu-2}
Y_{\mu}=N_{0}+\mu_{1}e_{1}-\mu_{2}e_{2}+\frac{|\mu|^2}{2}Y_{0}=
N_0+\bar\mu P+\mu \bar{P}+\frac{|\mu|^2}{2}Y_{0},
\end{equation}
with $\mu=\mu_1+i\mu_2$ and  $\ P=\frac{1}{2}(e_1-ie_2)$. Note that for $Y_\mu$ equation (\ref{Ymuz1}) holds. Substituting $\beta_j=-\frac{\bar\mu}{2}k_j$ into the expression (\ref{Ymuz1}) for
$Y_{\mu z}$ we obtain
$Y_{\mu u}, Y_{\mu v}  \in \hbox{Span}_{\mathbb{R}}
\{ e_{0}, \hat{e}_{0},e_1,e_2 \}$.

Moreover,  using $P_{\bar{z}}=-i\bar{a}_{34}P
+\frac{1}{2}(\bar{a}_{13}-i\bar{a}_{14})Y_0,$
which follows from $dF = F \alpha$ together with the special values for the entries of $\alpha$  in the case under discussion, one derives
$ Y_{\mu z\bar{z}}\in \hbox{Span}_\R \{e_{0},\hat{e}_{0},e_1,e_2\}.$
Thus we have shown that for the lightlike vector $Y_\mu$ the relation
$Y_{\mu},\ Y_{\mu z},\ Y_{\mu z\bar{z}}\in
\hbox{Span}_{\mathbb{C}}\{e_{0},\hat{e}_{0},e_1,e_2\} $ holds.

 Next we want to determine, when the map $Y_\mu$
  comes from some conformal map into $S^{n+2}$. It is easy to verify
that  $y^* = [Y_{\mu}]$ is a conformal (whence Willmore) surface with its conformal Gauss map spanning the same vector space as $f$ if and only if $Y_{\mu}$ satisfies
\begin{equation} \label{b2}
Y_{\mu},\ Y_{\mu z},\ Y_{\mu z\bar{z}}\in
\hbox{Span}_{\mathbb{C}}\{e_{0},\hat{e}_{0},e_1,e_2\},
\ \langle Y_{\mu z},Y_{\mu z}\rangle=0,\ \langle Y_{\mu z},Y_{\mu \bar{z}}\rangle>0.
\end{equation}
We have shown above that the first three conditions are satisfied for $Y_\mu$. To verify the fourth condition we evaluate the harmonicity condition for $f$.
Substituting $\beta_j=-\frac{\bar\mu}{2}k_j$
into the  first equation of  \eqref{harmonic-3} and using  $a_{13}+a_{23}=0$, $a_{12} = 0$, and the third equation of \eqref{harmonic-3} we derive
$\mu_z+\sqrt{2}(a_{13}-ia_{14})+ia_{34}\mu=0$ on $U^\sharp$.

Next we observe that the derivative of  $Y_\mu$ also satisfies
\begin{equation}
Y_{\mu z}=(\cdots)P+(\cdots)Y_0 +(\mu_z+\sqrt{2}(a_{13}-ia_{14})+ia_{34}\mu)\bar{P} = (\cdots)P+(\cdots)Y_0,
\end{equation}
whence $\langle Y_{\mu z},Y_{\mu z}\rangle=0$ follows.

As a consequence, the only condition to decide whether $f$  corresponds to a conformal  immersion or not is, whether we can satisfy
 $ \langle Y_{\mu z},Y_{\mu \bar{z}}\rangle>0$, or not.

This naturally leads to the two cases listed in the theorem.

 Case (i). There exists an  open and dense subset  $U^* \subset U^\sharp \subset U$,  where $ \langle Y_{\mu z},Y_{\mu \bar{z}}\rangle>0$ holds.
Then, for every $z\in U^*$, the subspace spanned by $f(z,\bar z)$  coincides with the one of $Gr_{y^*}(z,\bar z)$. Hence either $f(z,\bar z)=Gr_{ y^*}(z,\bar z)$ on $U^*$, or $f(z,\bar z)$ spans the same subspace as
$Gr_{y^*}(z,\bar z)$ and has an orientation which is opposite to the one of $Gr_{ y^*}(z,\bar z)$. In the latter case this just says that after a change of complex structure of $U$ the conformal Gauss map of $y^*$ coincides with $f$.
Moreover, since $f$ and hence $Gr_{y^*}$ contain the  light-like vector $Y_0$,
by the stereographic projection with respect to $Y_0$, $[Y_{\mu}]$ becomes a minimal
surface in $R^{n+2}$ (\cite{Bryant1984}, \cite{Ejiri1988},\ \cite{Mon}, \cite{Ma-W1}).

To extend ${y^*}=[Y_{\mu}]$ to  $U$, we need only to show that $\mu$ has a  well-defined ( finite or infinite)  limit at all points where $\sum|k_j|^2$ vanishes. The argument for this is very similar to the argument given  in Case $(a2)$.

From Corollary \ref{corollary-B1-vanish} in Appendix B we know   $B_1=h_0\tilde B_1$ with $\tilde B_1$ never vanishing on $U$. So for every $p\in U\backslash U^{\sharp}$, there exists some $j$ such that $\beta_j=h_0\tilde \beta_j,\ k_j=h_0\tilde k_j$  with $|\tilde\beta_j(p)|^2+|\tilde{k}_j(p)|^2\neq0$.
 If    $\tilde{k}_j(p) \neq 0$, then the limit $\lim_{\tilde p\rightarrow p}\frac{\beta_j}{k_j}=\frac{\tilde{\beta_j}}{\tilde{k}_j}$ exists, is finite and the quotient function is real analytic in a neighbourhood of $p$.
In this case we can define $[Y_{\mu}]$ also by \eqref{Y-mu-2}.
If $\tilde{k}_j=0$, then $\tilde{\beta}_j \neq 0$ and the inverse of the limit is real analytic in a neighbourhood of $p$.
Then we consider  (locally) $\tilde{Y}_{\mu}=\frac{1}{|\mu|^2}Y_{\mu}$. By
 the argument just given  $\tilde{Y}_{\mu}$ is well-defined and real analytic in a (possibly small) neighbourhood $U_p$ of $p\in U$ and $[\tilde{Y}_{\mu}]=[Y_{\mu}]={y^*}$ holds on $U_p\cap U^{\sharp}$. \vspace{1mm}

  Case (ii).
If  $ \langle Y_{\mu z},Y_{\mu \bar{z}}\rangle=0$ on $U$, then
$Y_{\mu}$ is another constant light-like vector of $f$, linearly independent from $Y_0$. So $Y_0$ and $Y_\mu$ span a constant, real, 2-dimensional Lorentzian subspace. Let $\{\tilde e_0,\tilde{\hat{e}}_0\}$ be an Euclidean oriented orthonormal basis of it and let $\{\tilde e_0,\tilde{\hat{e}}_0, \tilde e_1, \tilde e_2\}$ be an oriented orthonormal basis of $Span_{\R}\{e_0,\hat{e}_0,e_1,e_2\}$. Since $\{e_0,\hat{e}_0\}$ and
$\{\tilde{e}_0,\tilde{\hat{e}}_0\}$ span 2-dimensional Lorentzian subspaces and $\{e_1,e_2\}$ and $\tilde{e}_1, \tilde{e_2}$ span Euclidean subspaces, there exists a transformation in $SO^+(1,3)$ which maps the first basis onto the second one in the given order.
Then the lift $\tilde{F}(z,\bar z)=( \tilde e_0,\tilde{\hat{e}}_0, \tilde e_1, \tilde e_2,\psi_1,\cdots, \psi_n)$ reduces to a map into $SO(n+2)\subset SO^+(1,n+3)$, i.e, $f$ reduces to a harmonic map into $SO(n+2)/SO(2)\times SO(n)
 \subset SO^+(1,n+3)/SO^+(1,3)\times SO(n)$. \vspace{2mm}

Case $(b2.b)$: $\sum |k_j|^2 \equiv 0$: In this case the integrability condition of $\alpha$ implies, in view of the vanishing of both sides on (\ref{b2}), that the submatrix of $\alpha$ with entries $33,34,43,44$ is integrable. Hence, after gauging $\alpha$ by some matrix in $SO(2)$ we can assume w.l.g. that $a_{34} = 0$ holds.
Hence we obtain
 $e_{1z}=\sqrt{2}a_{13}Y_0$ and $\ e_{2z}=\sqrt{2}a_{14}Y_0.$  Since $Y_{0z}=0$, similarly after gauging $\alpha$ by some matrix in $SO^+(1,3)$ we can assume w.l.g. that $a_{13}=a_{14} = 0$ holds, i.e., $e_1$ and $e_2$ are constant.
Hence
$f$ reduces to a map into  $SO^+(1,n+1)/SO^+(1,1)\times SO(n)
 \subset SO^+(1,n+3)/SO^+(1,3)\times SO(n)$.
\end{proof}

\begin{remark} Note that the proof of Theorem \ref{th-Willmore-harmonic-U} shows how one can construct a Willmore surface from a strongly conformally harmonic map $f:U \rightarrow SO^+(1, n+3)/{SO^+(1,3) \times SO(n)}$  or prove that $f$ is not the conformal Gauss map of any conformal map $y: U \rightarrow S^{n+2}$:
\begin{enumerate}
  \item
 Step 1: Choose any frame $F:U \rightarrow SO^+(1, n+3)$.

  \item Step 2: Choose a gauge  $A: U \rightarrow SO^+(1,3) \times SO(n)$ such that the Maurer-Cartan form  $ \tilde{\alpha} = \tilde{F}^{-1} d \tilde{F}$
of the new frame $\tilde{F} = F A $ has the form as stated in (\ref{eq-B1}).
 For this we may need to change the complex structure on $U$.

  \item Step 3: Consider the function $\tilde{h} = \tilde{a}_{13} + \tilde{a}_{23} : U \rightarrow \C$.
\begin{enumerate}
  \item
 Step 3a: $\tilde{h}  \not\equiv 0$ on $U$:

 Then $f$ is the (harmonic) conformal Gauss map of some conformal map $\tilde{y} :\hat{U} \rightarrow S^{n+2}$. More precisely, $\tilde{y} =
\lbrack \frac{1}{\sqrt{2}} ( e_0 - \hat{e}_0 \rbrack,$ where $e_0$ and $\hat{e}_0$ are the first and second column of the extended frame $\tilde{F}$ respectively.

 \item  Step 3b: $\tilde{h}  \equiv 0$ on $U$:

 If the maximal rank of $B_1$ is 2, then $f$ is not (even locally) the conformal Gauss map of any conformal immersion.

  If the maximal rank of $B_1$ is 1,
Consider the function
$p = \sum_{j=1}^n  | \tilde{k} |^2 : U \rightarrow \R _{\geq 0}$.  If $p \equiv 0$ on $U$, then $f$ is not (even locally) the conformal Gauss map of any conformal immersion. If $p \not\equiv 0$ on $U$, we can only obtain (possibly only after changing the complex structure on U) Willmore surfaces in $S^{n+2}$ which are conformal to minimal surfaces in $\R^{n+2}$. If we are not interested in minimal surfaces in $\R^{n+2}, $ then we are done. Otherwise let $\mu =-\frac{2\bar{\tilde{\beta}}_j}{\bar{\tilde{k}}_j}$ on the points $\bar{\tilde{k}}_j\neq 0$. With such a function $\mu$ we consider $\tilde{Y}_\mu$ as in (\ref{Y-mu}).   The stereographic projection of  $\tilde{Y}_\mu$  with center $\tilde{Y}_0 $ yields a minimal surface in $\R^{n+2}$.
  \end{enumerate}
\end{enumerate}
\end{remark}
\begin{corollary} Let $f$ be a  strongly conformally harmonic map as in Theorem \ref{th-Willmore-harmonic-U} which belongs to  Case (a)
as well as to  Case (b). Then, possibly after  changing  the complex structure  of $U$, $f$ is the conformal Gauss map of some minimal surface in $R^{n+2}$ (after putting $R^{n+2}$ conformally into $S^{n+2}$), and vice versa.
\end{corollary}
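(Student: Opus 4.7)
The plan is to reduce the statement to a direct application of Theorem \ref{minimal}, interpreted through the structural dichotomy of Theorem \ref{th-Willmore-harmonic-U}. The key conceptual point is that Cases (a) and (b) are not mutually exclusive properties of the map $f$ itself, but rather properties of specific adapted frames: intrinsically, ``$f$ is in Case (a)'' means that $f$ is the conformal Gauss map of some Willmore immersion (obtained as $[Y_0]$ in the proof of Theorem \ref{th-Willmore-harmonic-U}), while ``$f$ is in Case (b)'' means that $f$ contains a constant non-zero light-like vector (namely, the $Y_0$ becomes parallel, hence constant after a scaling gauge). The two conditions can therefore hold simultaneously, and it is this simultaneous occurrence that the corollary characterizes.

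For the forward direction, suppose $f$ belongs to Case (a) as well as to Case (b). Case (a) yields a Willmore surface $y: U \setminus U_0 \rightarrow S^{n+2}$ whose conformal Gauss map coincides with $f$. Case (b) provides, after a suitable choice of frame, a constant non-zero light-like vector $Y_0$ with $Y_0 \in V_p$ for every $p \in U$; this is precisely the condition that $f$ contains a constant light-like vector. Applying Theorem \ref{minimal} to $y$, we conclude that $y$ is conformal to a minimal surface in $\mathbb{R}^{n+2}$, giving the desired conclusion.

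For the converse, let $x: M \rightarrow \mathbb{R}^{n+2}$ be a minimal surface and $y: M \rightarrow S^{n+2}$ its conformal embedding via inverse stereographic projection. Then $y$ is Willmore, and its conformal Gauss map $f = Gr(y)$ is a strongly conformally harmonic map. Taking the canonical lift of $y$ produces a frame realizing Case (a). On the other hand, by Theorem \ref{minimal}, $f$ contains a constant non-zero light-like vector $Y_0$. Construct a second frame $F = (e_0, \hat{e}_0, e_1, e_2, \psi_1, \ldots, \psi_n)$ with $B_1$ in the normalized form of Lemma \ref{lemma-B-12} and with $\tfrac{1}{\sqrt{2}}(e_0 - \hat{e}_0) = Y_0$. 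Since $Y_0$ is a constant vector we have $Y_{0z} \equiv 0$, and comparing with the identity
\[
Y_{0z} = -a_{12} Y_0 + \tfrac{1}{\sqrt{2}}(a_{13} + a_{23})(e_1 - i e_2)
\]
from the proof of Theorem \ref{th-Willmore-harmonic-U} forces $a_{12} \equiv 0$ and $a_{13} + a_{23} \equiv 0$ on $U$. This is precisely the defining condition of Case (b).

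The main obstacle is essentially a bookkeeping one: one must argue that ``belonging to Case (a)'' and ``belonging to Case (b)'' should be read as intrinsic properties of $f$ (or, equivalently, as the existence of frames of the two given forms) rather than as mutually exclusive conditions attached to a single frame. Once this interpretation is fixed, the proof is an immediate combination of Theorems \ref{minimal} and \ref{th-Willmore-harmonic-U}. A subsidiary verification is that in the converse direction the Case (b)-frame lies in the ``good'' sub-case (b2)(i) rather than in (b1) or (b2)(ii); this is automatic since minimal surfaces in $\mathbb{R}^{n+2}$ are S-Willmore, so the rank of $B_1$ is one, and the immersion $[Y_\mu]$ produced in sub-case (b2)(i) is by construction the given $y$.
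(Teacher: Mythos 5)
Your proposal is correct and follows exactly the route the paper intends: the corollary is stated without proof as an immediate consequence of Theorem \ref{th-Willmore-harmonic-U} combined with Theorem \ref{minimal}, with ``belongs to Case (a) as well as Case (b)'' read intrinsically (Case (a) supplying a Willmore surface with conformal Gauss map $f$, Case (b) supplying the constant lightlike vector), which is precisely your reading and your argument in both directions. Your frame computation for the converse (placing $Y_0$ as $\tfrac{1}{\sqrt{2}}(e_0-\hat e_0)$ and forcing $a_{12}=a_{13}+a_{23}=0$) is the same construction the paper later carries out in the proof of Theorem \ref{th-potential-light}, so nothing essential is missing.
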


\begin{corollary} Let $f$ be a strongly conformally harmonic map as in Theorem \ref{th-Willmore-harmonic-U}. Assume that $f$ does not contain any
constant lightlike vector, i.e. $f$ belongs to Case $(a)$.
\begin{enumerate}[(a)]
  \item
If  $rank B_1 = 2$, then there exists a  unique  Willmore, but not S-Willmore,  map  $y:U\rightarrow S^{n+2}$ such that on $U \setminus U_0 $ $y$ is immersed and has $f$ as  its conformal Gauss map.

\item If $rank B_1 = 1$, then there exists a pair of  S--Willmore maps $y,\hat{y}:U\rightarrow S^{n+2}$ which are dual to each other and  such that
\begin{enumerate}[$(b1)$]
  \item on an open dense subset $U_1$ of $U$, $y$ is immersed and has $f$ as  its conformal Gauss map;
  \item on an open dense subset $U_2$ of $U$, $\hat y$ is immersed and has $f$ as  its conformal Gauss map after a change of the orientation of $U$.
  \end{enumerate}
\end{enumerate}\end{corollary}

Ejiri's Willmore torus in $S^5$ (\cite{Ejiri1982}) provides an example for Case (a), and Veronese spheres in $S^{2m}$ (\cite{Mon}) provide examples for Case (b).

\begin{corollary} Let $f$ be a strongly conformally harmonic map as in Theorem \ref{th-Willmore-harmonic-U}. Assume that $f$ contains a
constant lightlike vector. Then either  $f$ does not correspond to any Willmore map,
or
 $f$ corresponds to a Willmore map which is conformally equivalent to a minimal surface into $\R^{n+2}$.
\end{corollary}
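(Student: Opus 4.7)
The plan is to reduce this corollary directly to the case analysis already carried out in Theorem \ref{th-Willmore-harmonic-U}. The hypothesis that $f$ contains a constant lightlike vector places us, after an appropriate choice of frame, in the situation $a_{13}+a_{23}\equiv 0$ on $U$, which is precisely Case (b) of that theorem. (Indeed, in the proof of Theorem \ref{th-Willmore-harmonic-U} the vector $Y_0=\tfrac{1}{\sqrt{2}}(e_0-\hat e_0)$ satisfies $Y_{0z}=0$ exactly when this coefficient vanishes, and up to scaling this $Y_0$ is the given constant lightlike vector.) So the only issue is to package the four subcases (b1), (b2)(i), and the two alternatives of (b2)(ii) into the stated dichotomy.

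Next I would split on the maximal rank of $B_1$. Since $f$ is non-constant and $B_1$ takes values in a $2$-dimensional null subspace of $\mathbb R^4_1\otimes\mathbb C$ by Lemma \ref{normalizationlemma}, the possible ranks are exactly $1$ and $2$. If the maximal rank is $2$, subcase (b1) of Theorem \ref{th-Willmore-harmonic-U} states that $f$ is not locally the conformal Gauss map of any Willmore immersion, yielding conclusion (a) of the corollary. If the maximal rank is $1$, I would further split according to whether $\sum_j|k_j|^2$ vanishes identically. If $\sum_j|k_j|^2\not\equiv 0$, subcase (b2)(i) provides a Willmore surface $y:U\setminus U_0\to S^{n+2}$ with $f$ as its conformal Gauss map, and the presence of the constant lightlike vector together with Theorem \ref{minimal} identifies $y$ as conformally equivalent to a minimal surface in $\mathbb R^{n+2}$; this is conclusion (b). If instead $\sum_j|k_j|^2\equiv 0$, subcase (b2)(ii) reduces $f$ to a harmonic map into either $SO^+(1,n+1)/SO^+(1,1)\times SO(n)$ or $SO(n+2)/SO(2)\times SO(n)$, and the same subcase asserts that $f$ is not locally the conformal Gauss map of any Willmore immersion, giving conclusion (a) once again.

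The three cases above exhaust all possibilities, so the corollary follows. There is no genuine obstacle here, since the corollary merely reorganizes the output of Theorem \ref{th-Willmore-harmonic-U}; the only point that deserves a word of care is the assertion in conclusion (a) that $f$ \emph{nowhere} corresponds to an immersion, as opposed to the weaker statement that some particular candidate fails. This is already built into subcases (b1) and (b2)(ii), whose proofs rule out the existence of any $\mu$ satisfying the enveloping conditions \eqref{b2}, and the rank of $B_1$ together with the constancy of $Y_0$ are both intrinsic to $f$ (independent of the choice of frame, by the argument used in Lemma~\ref{lemma-B-12}), so the alternative is genuinely global.
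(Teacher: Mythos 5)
Your proposal is correct and follows essentially the route the paper intends: the corollary is a direct repackaging of Case (b) of Theorem \ref{th-Willmore-harmonic-U} (a constant lightlike vector forces, for an adapted frame, $a_{13}+a_{23}\equiv 0$), with subcases (b1) and (b2)(ii) giving alternative a) and subcase (b2)(i) giving alternative b), Theorem \ref{minimal} serving only as an optional confirmation of the minimal-surface identification. One small imprecision: tying subcase (b2)(i) to $\sum_j|k_j|^2\not\equiv 0$ is not quite what the theorem asserts — in that situation its proof still allows $\langle Y_{\mu z},Y_{\mu \bar{z}}\rangle\equiv 0$, in which case $f$ reduces to $SO(n+2)/SO(2)\times SO(n)$ as in (b2)(ii) and alternative a) holds rather than b); since this still lands in one of the two stated alternatives, the dichotomy itself is unaffected.
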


\begin{remark}  As stated in the introduction and
also stated explicitly by the corollaries above,
one can divide the set of all strongly harmonic
maps into two groups, one consisting of those
which do not contain a constant lightlike vector
and the other consisting of those which do contain
a constant lightlike vector.
The latter ones will not always represent a conformal Gauss map
of some immersion. But if they do, then the corresponding Willmore surface
is conformally equivalent to a minimal surface in Euclidean space.
Since this type of Willmore surfaces is well known and well investigated,
we are  primarily  interested in the surfaces belonging to
the first group of conformally harmonic maps.
In terms of potentials  these two cases can be distinguished
 very easily,
since (in section 6) we will describe precisely those normalized potentials,
for which the corresponding strongly conformal map contains a constant
lightlike vector. So all we need to do is to make sure our
normalized potential does not have the form stated in Theorem \ref{th-potential-light}.
In particular, we will obtain new Willmore spheres which are not S-Willmore if we assume furthermore that $B_1$ is of rank $2$ and the harmonic map is a map from $S^2$ into  $ SO^+(1,n+3)/SO^+(1,3)\times SO(n)$. Such maps are of finite uniton type, as we will show later, and can be treated by our procedure relatively easily.
\end{remark}

We also have the following
\begin{corollary}
Let $f$ be a strongly conformally harmonic map as in Theorem \ref{th-Willmore-harmonic-U}. Fix the orientation of contractible open set $U$. Then
\begin{enumerate}
\item either  $f$ can not be the conformal Gauss map of any conformal map on any open subset of $U$;
\item or there exists a unique Willmore map $y:U\rightarrow S^{n+2}$ such that $f$ is the conformal Gauss map of $y$ on an open dense subset of $U$.
\end{enumerate}
\end{corollary}
\begin{proof}
If $B_1$ has the form \eqref{eq-B1} on $U$, our corollary follows from  Theorem \ref{th-Willmore-harmonic-U}.

Now assume that $B_1=(v_1,\cdots,v_n)$ has the second form in \eqref{eq-B1-standard} on $U$. Next we distinguish two cases.

Assume that the rank of $B_1$ is $2$ on an open dense subset $U'$ of $U$. Set
\[\mathcal{N} = \C (1,-1,0,0)^t,~~ \mathcal{N}_{\pm}= \C (0,0,1, \pm i)^t.\]
Then $Span_{\C}\{v_1,\cdots,v_n\}=\mathcal{N}\oplus\mathcal{N}_-$ on $U'$. We claim that
$f$ can not be the conformal Gauss map of any conformal map on any open subset of $U$. Otherwise
  we have some Willmore immersion $y:U''\subset\rightarrow S^{n+2}$ with $f$ as its conformal Gauss map. By Proposition \ref{frame} there exists $\tilde F$ such that its Maurer-Cartan form $\tilde B_1$ has the form \eqref{eq-B1} on $U''$. This means that there exists some $L=\hbox{diag}(L_1,L_2):U''\rightarrow SO^+(1,3)\times SO(n)$ such that $\tilde F=FL$. So $\tilde B_1=(\tilde{v}_1,\cdots,\tilde{v}_n)=L_1 B_1L_2^{-1}$. Since  $Span_{\C}\{\tilde{v}_1,\cdots,\tilde{v}_n\}=\mathcal{N}\oplus\mathcal{N}_+$ on $U''$, $L_1$ maps $\mathcal{N}\oplus\mathcal{N}_-$ into $\mathcal{N}\oplus\mathcal{N}_+$. But this is not possible.

Now assume that the maximal rank of $B_1$ is $1$ on an open subset $U'$ and hence on the whole $U$. Applying Lemma \ref{lemma-null} as in the proof of Theorem \ref{normalizationlemma} in Appendix B, one has that there exists some frame $\hat F$ of $f$ on $U$ such that $\hat B_1$ has the form \eqref{eq-B1} on $U$. Our corollary follows again from  Theorem \ref{th-Willmore-harmonic-U}.

\end{proof}
Since any two points of a surface $M$ are contained in a contractible open subset of $M$ the corollary yields straightforwardly the following
\begin{theorem}\label{th-Willmore-harmonic}  Let $f: M\rightarrow SO^+(1,n+3)/SO^+(1,3)\times SO(n)$ be a non-constant strongly conformally harmonic map from a connected Riemann surface $M$. If
on a contractible open subset $U\subset M$  $f$ is the conformal Gauss map of some Willmore immersion $\tilde y:U\rightarrow S^{n+2}$, then there exists a unique conformal (Willmore) map $y:M\rightarrow S^{n+2}$ such that $f$ is the conformal Gauss map of $y$ on an open dense subset $M_1$ of $M$ and $y|_{U}=\tilde y$.
\end{theorem}


\section{Loop group theory for harmonic maps}

In this section we start by collecting the basic definitions and the basic
decomposition theorems for loop groups (\cite{DPW}, \cite{Wu}, \cite{Ba-Do}).
Next we recall the DPW method for the construction of harmonic maps.
Since we are mainly interested in strongly conformally harmonic maps,
we characterize strongly conformal harmonicity in terms of normalized potentials which satisfy the additional
condition  $B_1^t I_{1,3} B_1 = 0$.


\subsection{Loop groups and decomposition theorems}

Let $G$ be a connected real Lie group and $G^\C$ its complexification
(For details on complexifications, in particular of semi-simple Lie groups, see \cite{Hochschild}).

Let $\sigma$ denote an inner involution of $G$ and $K$ a closed subgroup satisfying $(Fix^\sigma(G))^0 \subset K \subset Fix^\sigma(G)$.
Then $\sigma$ fixes $\mathfrak{k} = Lie K$.
The extension of $\sigma$  to an involution of $G^\C$ has
 $\mathfrak{k}^\C$  as  its fixed point algebra.
By abuse of notation we put $K^\C = Fix^\sigma(G^\C)$.
It is known that $K^\C$ is in general not connected.

Here are  the basic definitions about loop groups which we will apply to any  Lie group $G$ and its  complexification
$G^\C$, assuming an inner involution $\sigma$ of these groups is given:
 \begin{equation*}
\begin{array}{llll}
\Lambda G^{\mathbb{C}}_{\sigma} ~&=\{\gamma:S^1\rightarrow G^{\mathbb{C}}~|~ ,\
\sigma \gamma(\lambda)=\gamma(-\lambda),\lambda\in S^1  \},\\[1mm]

\Lambda G_{\sigma} ~&=\{\gamma\in \Lambda G^{\mathbb{C}}_{\sigma}
|~ \gamma(\lambda)\in G, \hbox{for all}\ \lambda\in S^1 \},\\[1mm]
\Omega G_{\sigma} ~&=\{\gamma\in \Lambda G_{\sigma}|~ \gamma(1)=e \},\\[1mm]

 \Lambda^{-} G^{\mathbb{C}}_{\sigma}  ~&=
\{\gamma\in \Lambda G^{\mathbb{C}}_{\sigma}~
|~ \gamma \hbox{ extends holomorphically to } |\lambda|>1 \cup\{\infty\} \},\\[1mm]

\Lambda_{*}^{-} G^{\mathbb{C}}_{\sigma} ~&=\{\gamma\in \Lambda G^{\mathbb{C}}_{\sigma}~
|~ \gamma \hbox{ extends holomorphically to } |\lambda|>1\cup\{\infty\},\  \gamma(\infty)=e \},\\[1mm]

\Lambda^{+} G^{\mathbb{C}}_{\sigma} ~&=\{\gamma\in \Lambda G^{\mathbb{C}}_{\sigma}~
|~ \gamma \hbox{ extends holomorphically to } |\lambda|<1 \},\\[1mm]

\Lambda_{L}^{+} G^{\mathbb{C}}_{\sigma} ~&=\{\gamma\in
\Lambda G^{\mathbb{C}}_{\sigma}~|~   \gamma(0)\in L \},\\[1mm]
\end{array}\end{equation*}
where $L$ denotes some Lie subgroup of $K^\C$, prescribing in which group the leading term is supposed to be contained in.
If $L = (K^\C)^0$, then we write $\Lambda_{\mathcal{C}}^{\pm} G^{\mathbb{C}}_{\sigma} $. These groups are connected.

For the decomposition theorems quoted below (which are of crucial importance for the applicability of the loop group method explained below) we need to have some topology on our loop groups.
This can be done in several ways.  We will represent $G^\C$ as a matrix group and will assume that all matrix entries
of  $\Lambda G^{\mathbb{C}}_{\sigma}$  are in the Wiener
algebra of the unit circle.  We thus obtain that $\Lambda G^{\mathbb{C}}_{\sigma}$ is a Banach Lie group.
All other groups discussed in this paper inherit a Banach Lie group structure in a natural way.

With these assumptions we obtain:
\begin{theorem} {\em(Birkhoff Decomposition Theorem for $ (\Lambda {G}_\sigma^\C )^0$)} \label{thm-birkhoff-0}
\begin{enumerate}
\item $ (\Lambda {G}_\sigma^\C )^0= \bigcup \Lambda^{-}_{\mathcal{C}} {G}^{\mathbb{C}}_{\sigma} \cdot \omega \cdot \Lambda^{+}_{\mathcal{C}} {G}^{\mathbb{C}}_{\sigma}$,
where the $\omega$'s are representatives of the double cosets.

\item The multiplication $\Lambda_{*}^{-} {G}^{\mathbb{C}}_{\sigma}\times
\Lambda^{+}_\FC {G}^{\mathbb{C}}_{\sigma}\rightarrow
\Lambda {G}^{\mathbb{C}}_{\sigma}$ is an analytic  diffeomorphism onto the
open and dense subset $\Lambda_{*}^{-} {G}^{\mathbb{C}}_{\sigma}\cdot
\Lambda^{+}_\FC {G}^{\mathbb{C}}_{\sigma}$  of $ (\Lambda G^\C_\sigma)^0$
{\em (big Birkhoff cell)}.
\end{enumerate}
\end {theorem}

\begin{remark} \
\begin{enumerate}
\item For the case of Willmore surfaces we consider  the inner symmetric space $G/K$, where $G = SO^+(1,n+3)$ and $K = SO^+(1,3) \times SO(n)$.
\item The inner involution  $\sigma$ is given by $\sigma = Ad I_{4,n}$ with $ I_{4,n} = diag(I_4, -I_n)$. Then $ Fix^\sigma(G) = S( O^+(1,3) \times O(n))$ and $K$ is the connected component of this fixed point group.

\item For the complexification  of $G$ we obtain
 $G^\C \cong SO(1,n+3,\C)$ and the fixed point group of $\sigma$ in $G^\C$ (called $K^\C$,  by abuse of notation, as above) is $K^\C \cong Fix^\sigma(G^\C) = S(O(1,3,\C) \times O(n,\C))$.  Moreover, we obtain that $(K^\C)^0$ is the complexification of $({Fix}^{\sigma}(G))^0$.

\item For the simply-connected covers $\tilde{G}$ and $\tilde{G}^\C$ of $G$ and  $ G^\C$ respectively we obtain
$\tilde{G} = Spin(1,n+3)^0$ and
$\tilde{G}^\C = Spin(1, n+3,\C)$  (See e.g. \cite{LM}, $(2.35)$  and \cite{Mein}, Proposition 3.1 respectively).

\end{enumerate}
\end{remark}

The discussion so far is related to part one of the loop group method (as outlined in detail below).
Part two of the loop group method requires another decomposition theorem:

\begin{theorem} {\em (Iwasawa Decomposition Theorem for
$(\Lambda G^{\C})_{\sigma} ^0 $\label{gen-Iwasawa})} \label{thm-Iwasawa-0}
\begin{enumerate}
\item
$(\Lambda G^{\C})_{\sigma} ^0=
\bigcup \Lambda G_{\sigma}^0\cdot \delta\cdot
\Lambda_\mathcal{C}^{+} G^{\mathbb{C}}_{\sigma},$
where the $\delta$'s are representatives of the double cosets.

 \item The multiplication $\Lambda G_{\sigma}^0 \times \Lambda_\mathcal{C}^{+} G^{\mathbb{C}}_{\sigma}\rightarrow
(\Lambda G^{\mathbb{C}}_{\sigma})^0$ is a real analytic map onto the connected open subset
$ \Lambda G_{\sigma}^0 \cdot \Lambda_\mathcal{C}^{+} G^{\mathbb{C}}_{\sigma}   = \mathcal{I}^{\mathcal{U}}_e \subset \Lambda G^{\mathbb{C}}_{\sigma}$.
\end{enumerate}

\end{theorem}

For the loop groups involved in the description of Willmore surfaces we add two results:

\begin{theorem}
Consider the setting $G/K = Gr_{1,3}(\mathbb{R}^{n+4}_{1}) = SO^+(1,n+3)/SO^+(1,3)\times SO(n)$.
\begin{enumerate}
\item
 There exist  two different open Iwasawa cells in the connected loop group
$(\Lambda G^{\mathbb{C}}_{\sigma})^0$,  one given by $\delta = e$ and the other one by $\delta = diag(-1,1,1,1,-1,1,1,...,1) $.

\item
There exists a closed, connected, solvable subgroup
$S \subseteq (K^\C)^0$ such that
the multiplication

$\Lambda G_{\sigma}^0 \times \Lambda^{+}_S G^{\mathbb{C}}_{\sigma}\rightarrow
(\Lambda G^{\mathbb{C}}_{\sigma})^0$ is a real analytic diffeomorphism onto the connected open subset
$ \Lambda G_{\sigma}^0 \cdot \Lambda^{+}_S G^{\mathbb{C}}_{\sigma}      \subset  \mathcal{I}^{\mathcal{U}}_e \subset(\Lambda G^{\mathbb{C}}_{\sigma})^0$.
\end{enumerate}
\end{theorem}

 Proofs for the last three theorems are given in  Appendix A.


\subsection{The DPW method and potentials }

 With the loop group decompositions as stated above, we obtain a
construction scheme of harmonic maps from a surface into any real pseudo-Riemannian symmetric space $G/K$ for which the metric is induced from a bi-invariant metric on $G$. All symmetric spaces considered in this paper are of this type.

So far we have mainly discussed Willmore surfaces and the corresponding conformally harmonic maps
defined on some open subset $U$ of $\C$ (or possibly an open subset of some surface $M$).
Since the immersions of interest are conformal, the corresponding surface has a complex structure.
We thus only consider  Riemann surfaces in this paper.
 If $M$ is such a Riemann surface, then its universal cover
$\tilde{M}$ is either $S^2$ or $\C$ or $\E$, the open unit disk in $\C$.
Every harmonic map from $M$ to some  symmetric space $G/K$ induces via composition with the natural projection
a harmonic map from the universal cover $\tilde{M}$ into $G/K$.
Therefore, to start with, we need to consider harmonic maps from $S^2$, $\C$ and $\E$ into $G/K$.

\begin{theorem}{\em(\cite{DPW})}\label{DPW}
Let $\D$ be a contractible open subset of $\C$ and $z_0 \in \D$ a base point.
Let $f: \D \rightarrow G/K$ be a harmonic map with $f(z_0)=eK.$
Then the associated family  $f_{\lambda}$ of $f$ can be lifted to a map
$F:\D \rightarrow \Lambda G_{\sigma}$, the extended frame of $f,$ and we can assume  w.l.g. that $F(z_0,\lambda)= e$ holds.
Under this assumption,
\begin{enumerate}
\item
  The map $F$ takes only values in
$ \mathcal{I}^{\mathcal{U}}_e \subset \Lambda G^{\mathbb{C}}_{\sigma}$.

\item There exists a discrete subset $\D_0\subset \D$ such that on $\D\setminus \D_0$
we have the decomposition
$$F(z,\lambda)=F_-(z,\lambda)\cdot F_+(z,\lambda),
$$where $$ F_-(z,\lambda)\in\Lambda_{*}^{-} G^{\mathbb{C}}_{\sigma}
\hspace{2mm} \mbox{and} \hspace{2mm} F_+(z,\lambda)\in (\Lambda^{+} G^{\mathbb{C}}_{\sigma})^0.$$
Moreover $F_-(z,\lambda)$ is meromorphic in $z \in \D$ and $F_-(z_0,\lambda)=e$ holds and the Maurer-Cartan form $\eta$ of $F_-$
$$\eta= F_-(z,\lambda)^{-1} d F_-(z,\lambda)$$
is a $\lambda^{-1}\cdot\mathfrak{p}^{\mathbb{C}}-\hbox{valued}$ meromorphic $(1,0)-$
form with poles at points of $\D_0$ only.

\item Conversely, any harmonic map  $f: \D\rightarrow G/K$ can be derived from a
$\lambda^{-1}\cdot\mathfrak{p}^{\mathbb{C}}-\hbox{valued}$ meromorphic $(1,0)-$ form $\eta$ on $\D$.

\item  Spelling out the converse procedure in detail we obtain:
Let $\eta$ be a  $\lambda^{-1}\cdot\mathfrak{p}^{\mathbb{C}}-\hbox{valued}$ meromorphic $(1,0)-$ form for which the solution
to the ODE
\begin{equation}
F_-(z,\lambda)^{-1} d F_-(z,\lambda)=\eta, \hspace{5mm} F_-(z_0,\lambda)=e,
\end{equation}
is meromorphic on $\D$, with  $\D_0$ as set of possible poles.
Then on the open set $\D_{\mathcal{I}} = \lbrace z \in \D; F(z,\lambda)
\in \mathcal{I}^{\mathcal{U}} \rbrace$ we
define $\tilde{F}(z,\lambda)$ via the factorization
 $\mathcal{I}^{\mathcal{U}}_e =  ( \Lambda G_{\sigma})^0 \cdot \Lambda_S^{+} G^{\mathbb{C}}_{\sigma}
\subset  \Lambda G^{\mathbb{C}}_{\sigma}$:
\begin {equation}\label{Iwa}
F_-(z,\lambda)=\tilde{F}(z,\lambda)\cdot \tilde{F}_+(z,\lambda)^{-1}.
\end{equation}
 This way one obtains an extended frame
$$ \tilde{F}(z,\lambda)=F_-(z,\lambda)\cdot \tilde{F}_+(z,\lambda)\\$$
of some harmonic map from $\D_{\mathcal{I}} \subset \D$ to $G/K$ satisfying  $\tilde{F}(z_0,\lambda)=e$.

Moreover, the two constructions outlined above  are inverse to each other (on appropriate domains of definition).
\end{enumerate}
\end{theorem}

\begin{definition}{\em(\cite{DPW})}
The $\lambda^{-1}\cdot \mathfrak{p}^{\mathbb{C}}-\hbox{valued}$ meromorphic $(1,0)$
form $\eta$ is called the {\em normalized potential} for the harmonic
map $f$ with the point $z_0$ as the reference point.
\end{definition}

\begin{remark}\
\begin{enumerate}
\item
Note that the normalized potential is uniquely determined once a base point is chosen.
However, if we conjugate a normalized potential by  some $z-$independent element  $k$ of $K$,
then the procedure outlined in the theorem produces a new harmonic map (and correspondingly a new Willmore surface)
which differs from the original one by the rigid motion induced by $k$.
Since we usually do not care about how the harmonic map (or the Willmore surface) sits
in  space, we sometimes use elements of $K$ to simplify or further normalize the normalized potential.
 \item
In the converse procedure, part $(4)$ above, since in our case the symmetric space $G/K$ is not compact,
the Iwasawa splitting \eqref{Iwa} will in general not be possible for all $z \in \D$.
Thus $\tilde{F}$, as well as the harmonic map $\tilde{f}$ will have singularities on $\D$. There are two types of singularities. One type stems from poles in the potential $\eta$ and the other type occurs,
when $F_-$   touches or crosses the boundary of an  open Iwasawa cell (See \cite{B-R-S}).
(There are at least two open Iwasawa cells, as pointed out above).
In the new example of a Willmore sphere in $S^6$  \cite{Wang-iso} (also see Theorem \ref{thm-example}  already mentioned in the introduction)  it happens
that the frame of the harmonic map has  singularities, but the Willmore immersion and hence the harmonic map does not have any singularity
 (\cite{Wang-iso}).  In this case the frame is the product of a matrix without singularities with a singular scalar factor. The projection of the frame to the harmonic map cancels out this singular factor. So the appearance of singularities is only due to the choice of frame.

\end{enumerate}\end{remark}

So far we have only introduced the ``normalized potential". However, in many applications it is much more convenient to use potentials which contain, in their Fourier expansion, more than one power of $\lambda$.
The normalized potential is usually meromorphic in $z$. Since it is uniquely determined, there is no way to change this. However, when permitting many (maybe infinitely many) powers of $\lambda$, then one can obtain potential with holomorphic coefficients,  which will be called {\em holomorphic potential}.

\begin{theorem} \label{pot-hol} Let $\D$ be a contractible open subset of $\C$.
Let $F(z,\lambda)$ be the frame of some harmonic map
into $G/K$. Then there exists some $V_+ \in \Lambda^{+} G^{\mathbb{C}}_{\sigma} $ such that $C(z,\lambda) =
F V_+$ is holomorphic in $z$ and in $\lambda \in \mathbb{C}^*$.
Then the Maurer-Cartan form $\eta = C^{-1} dC$ of $C$ is a holomorphic $(1,0)-$form on $\D$ and it is easy to verify that $\lambda \eta$ is holomorphic for $\lambda \in \C$.
Conversely, any harmonic map  $f: \D\rightarrow G/K$ can be derived from
such a holomorphic $(1,0)-$ form $\eta$ on $\D$ by the same steps as in the previous theorem.
\end{theorem}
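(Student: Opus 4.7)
The plan is to build $V_+$ by solving a $\bar{\partial}$-equation that kills the antiholomorphic part of the Maurer-Cartan form of $F$, read off the stated properties of $C$ and $\eta$ from this construction, and then invert the procedure via the Iwasawa decomposition. Writing $\alpha_\lambda = F^{-1}dF = \lambda^{-1}\alpha_\mathfrak{p}' + \alpha_\mathfrak{k} + \lambda\alpha_\mathfrak{p}''$, denote by $\beta$ its $(0,1)$-part, which is a polynomial of degree at most one in $\lambda$ and hence holomorphic in $\lambda \in \C$. I would solve
\[
\bar{\partial} V_+ = -\beta\, V_+, \qquad V_+(z_0,\lambda) = e,
\]
on the contractible domain $\D$. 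Integrability is automatic on a Riemann surface since all $(0,2)$-forms vanish. Differentiating the equation with respect to $\bar{\lambda}$ shows that $\partial_{\bar{\lambda}}V_+$ satisfies the same homogeneous equation with zero initial condition, hence vanishes, so $V_+$ is holomorphic in $\lambda$ on all of $\C$. The $\sigma$-twisting of $V_+$ is inherited from that of $\beta$ and the initial condition, giving $V_+ \in \Lambda^+ G^{\C}_\sigma$.

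Setting $C := FV_+$, a direct computation yields $C^{-1}\bar{\partial}C = V_+^{-1}\beta V_+\,d\bar{z} + V_+^{-1}\bar{\partial}V_+ = 0$, so $C$ is holomorphic in $z$. Since $\alpha_\lambda$ is holomorphic in $\lambda \in \C^*$, so is $F$, and since $V_+$ is holomorphic in $\lambda \in \C$, the product $C = FV_+$ is holomorphic in $\lambda \in \C^*$. For $\eta = C^{-1}dC$, the vanishing of its $(0,1)$-part makes it a $(1,0)$-form; the expansion
\[
\eta = V_+^{-1}\!\left(\lambda^{-1}\alpha_\mathfrak{p}' + \alpha_\mathfrak{k}\right)\!V_+\,dz + V_+^{-1}\partial V_+,
\]
together with the holomorphicity of $V_+^{\pm 1}$ on $\C$, shows the only $\lambda$-singularity is a simple pole at $0$, so $\lambda\eta$ is holomorphic on $\C$.

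For the converse, given a holomorphic $(1,0)$-form $\eta$ on $\D$ with $\lambda\eta$ holomorphic on $\C$, I would solve $C^{-1}dC = \eta$ with $C(z_0,\lambda)=e$; integrability $d\eta + \tfrac{1}{2}[\eta\wedge\eta] = 0$ is automatic since all $(2,0)$- and $(0,2)$-forms on $\D$ vanish. The Iwasawa decomposition of the previous theorem gives $C = \tilde{F}\tilde{V}_+^{-1}$ on the preimage of the open Iwasawa cells, with $\tilde{F}: \D \to \Lambda G_\sigma$ and $\tilde{V}_+ \in \Lambda^+ G^{\C}_\sigma$. Expanding $\tilde{F}^{-1}d\tilde{F} = \tilde{V}_+\eta\tilde{V}_+^{-1} + \tilde{V}_+d\tilde{V}_+^{-1}$, its $(0,1)$-part lies in $\Lambda^+\mathfrak{g}^{\C}_\sigma$ because $\tilde{V}_+ \in \Lambda^+$, while the reality condition coming from $\tilde{F}\in \Lambda G_\sigma$ collapses the Laurent expansion to degrees $-1, 0, 1$, recovering the extended-frame structure $\lambda^{-1}\tilde\alpha_\mathfrak{p}' + \tilde\alpha_\mathfrak{k} + \lambda\tilde\alpha_\mathfrak{p}''$ of a harmonic map. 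Uniqueness of the ODE solutions makes the two procedures mutually inverse once a base point is fixed.

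The hardest part is the converse step: showing that the Maurer-Cartan form of $\tilde{F}$ takes exactly the required three-term form. This combines the $\Lambda^+$-structure inherited from $\tilde{V}_+$, the simple-pole structure of $\eta$, and the reality condition from $\tilde{F}\in \Lambda G_\sigma$; each ingredient on its own is straightforward, but tying them together cleanly, especially tracking what happens near the boundary of the open Iwasawa cells, requires care.
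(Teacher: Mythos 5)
Your overall strategy is the right one, and it is in fact the approach of the appendix of \cite{DPW}, to which the paper delegates the proof: gauge away the $(0,1)$-part of the extended frame's Maurer--Cartan form by a loop-group-valued $\bar{\partial}$-problem, and recover the harmonic map in the converse direction via Iwasawa splitting. Your converse argument (conjugating $\eta$ by $\tilde{V}_+$, using the $\Lambda^+$-structure together with the reality of $\tilde{F}$ to collapse the expansion to the three terms $\lambda^{-1}\tilde{\alpha}_{\mathfrak{p}}'+\tilde{\alpha}_{\mathfrak{k}}+\lambda\tilde{\alpha}_{\mathfrak{p}}''$) is the standard and correct one.

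However, the forward direction has a genuine gap at its central step. The equation $\bar{\partial}V_+=-\beta V_+$ is a $\bar{\partial}$-equation, not an ODE, and your two justifications do not work for it. First, ``integrability is automatic since $(0,2)$-forms vanish'' only says there is no Frobenius-type obstruction; it does not produce a solution. The existence of a \emph{global} solution on $\D$ with values in the Banach Lie group $\Lambda^{+}G^{\mathbb{C}}_{\sigma}$ is precisely the nontrivial content of the theorem (holomorphic triviality of the Koszul--Malgrange structure, i.e. a Grauert/Bungart-type result for Banach Lie groups, which is what the appendix of \cite{DPW} establishes); it must be proved or cited, not asserted. Second, the uniqueness argument for $\lambda$-holomorphy fails: solutions of $\bar{\partial}W=-\beta W$ are unique only up to right multiplication by maps holomorphic in $z$, so a solution vanishing at the single point $z_0$ need not vanish identically (take $W=V_+(z-z_0)A$); hence ``same homogeneous equation with zero initial condition, hence vanishes'' is not valid, and indeed the pointwise normalization $V_+(z_0,\lambda)=e$ neither determines $V_+$ nor is needed --- the non-uniqueness is exactly why holomorphic potentials, unlike the normalized potential, are not unique. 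The correct repair is to solve the $\bar{\partial}$-problem directly in $\Lambda^{+}G^{\mathbb{C}}_{\sigma}$ (its Lie algebra contains $\beta=\alpha_{\mathfrak{k}}''+\lambda\alpha_{\mathfrak{p}}''$), so that holomorphic dependence on $\lambda$ comes for free from membership in $\Lambda^{+}$, and to invoke the cited solvability theorem for the existence of $V_+$ on all of $\D$.
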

The proof can be taken verbatim from the appendix of
\cite{DPW} and will  be omitted here.
 We would like to point out that the proof works for all harmonic maps into any Riemannian or pseudo-Riemannian symmetric space (actually, even more generally, for primitive harmonic maps into $k-$symmetric spaces).
In particular, the proof is independent of the results of the previous sections.

\begin{remark}\
\begin{enumerate}
 \item
Again, since the Iwasawa splitting is not global in our case, even when starting with a holomorphic potential, the resulting harmonic map will generally have singularities.

\item
Let $\eta_1$ and $\eta_2$ be any two potentials producing the same harmonic map by the procedure outlined above. Then there exists a gauge
$W_+:\D \rightarrow\Lambda^+G^{\C}_{\sigma}$ transforming one potential into the other. For a proof consider the frames $F_1 = C_1 V_{+1}$ and
$F_2 = C_2 V_{+2}$ constructed as outlined above. Since we assume that the two potentials induce the same harmonic map, these frames only differ by some gauge: $F_1 = F_2 T$ where $T \in K$. This implies
$C_1 V_{+1} = C_2 V_{+2} T$. Thus $W_+ = V_{+2} T V_{+1}^{-1}$ is the desired gauge.
\end{enumerate}\end{remark}
\vspace{1mm}

So far we have only discussed potentials for harmonic maps defined on some
contractible open subset of $\C$. Let now $M$ denote a Riemann surface which is either non-compact or compact of positive genus.
Then the universal cover $\tilde{M}$ of $M$ can be realized as a contractible open subset of $\C$. Moreover, if $f:M \rightarrow G/K$ is a harmonic map, then the composition $\tilde{f}$ of $f$ with the canonical projection from $\tilde{M}$ onto $M$ is also harmonic. Therefore to $\tilde{f}$ we can construct normalized potentials and holomorphic potentials as outlined above. These potentials for $\tilde{f}$
will also be called potentials for $f$.
The converse procedure as outlined in the last two theorems produces harmonic maps defined on some open subsets (containing the base point) of $\D$.
For these harmonic maps to descend to $M$ ``closing conditions" need to be satisfied.

\begin{remark}\
\begin{enumerate}
 \item If $M = S^2$, then it is not clear a priori that the procedure discussed in Theorem \ref{DPW} works as well. However, if the  symmetric target space actually is a real Lie group $G,$ considered as a symmetric space $G \cong (G \times G)/\Delta$, where $\Delta$ denotes the subgroup
$\Delta = \lbrace (g,g) \in G \times G, g \in G \rbrace$ and one uses the natural projection $(g,h) \rightarrow  gh^{-1},$ then the same procedure works.
In this case one can  lift a harmonic map $f: S^2 \rightarrow G$  to $G \times G$
by $F = (f,e)$. This way one obtains, as in the previous cases, a normalized potential. It has the form
$\xi = (\lambda^{-1} \eta , -\lambda^{-1} \eta)$. Harmonic maps into Lie groups (as symmetric spaces) have been discussed in \cite{Do-Es}, Section 9
. Note, however, that the formula given in \cite{Do-Es} for the normalized potential shows a wrong $\lambda-$ dependence.

 \item On the other hand, one does not obtain a holomorphic potential for $M=S^2$, since $S^2$ does not carry any non-trivial holomorphic $(1,0)-$forms. The proof of \cite{DPW} which was  used in the proof of the theorem above  is not applicable to the case $M = S^2$, of course.
\end{enumerate}\end{remark}
Let's consider now the case that we have a harmonic map $f$ from $M = S^2$
into some general symmetric space $G/K$. Since $\pi_2 (S^2) = \mathbb{Z}$ and  $\pi_2 (G) = 0$, it will generally not be possible to lift the smooth map $f$ to a smooth map $F$  from $S^2$ to $G$ such that the map $F$ composed with the natural projection from $G$ to $G/K$ is $f$. But one can find some way around this non-lifting obstacle.
\begin{theorem}\label{th-potential-sphere}
Every harmonic map from $S^2$ to any  Riemannian or pseudo-Riemannian symmetric space $G/K$ admits an extended frame with at most two singularities and it admits  a global meromorphic extended frame. In particular, every harmonic
 map from $S^2$ to any  Riemannian or pseudo-Riemannian symmetric space $G/K$
can be obtained from some meromorphic normalized potential.
\end{theorem}

\begin{proof}Let $f:S^2\rightarrow G/K$ be a harmonic map. Set $$\mathcal{U}_1=S^2\backslash \{\hbox{ north pole }\},
\ ~\mathcal{U}_2=S^2\backslash \{\hbox{ south pole }\}$$
and $f_1=f|_{\mathcal{U}_1}$, $f_2=f|_{\mathcal{U}_2}$. Since $\mathcal{U}_1\cong\mathcal{U}_2\cong\mathbb{C}$, there exist frame lifts $F_j:\mathcal{U}_j\rightarrow G$ of $f_j$, $j=1,2$, by \cite{DPW}.

We can assume w.l.g. $F_1(p_0)=F_2(p_0)=e$ where $p_0$ is a fixed base point in $\mathcal{U}_1\cap\mathcal{U}_2$ and $f(p_0)=e \mod K$.
Also we have $F_2=F_1\mathcal{K}$ on $\mathcal{U}_1\cap\mathcal{U}_2$. Introducing $\lambda$ yields ($\sigma-$twisted) $F_1$ and $F_2$ and again
$F_2=F_1\mathcal{K}$, where $F_j=F_j(z,\bar{z},\lambda)$ and $\mathcal{K}=\mathcal{K}(z,\bar{z})$. By \cite{DPW}, there exist discrete subsets $D_j\subset \mathcal{U}_j$, $j=1,2$ such that
$$F_j=F_{j-}F_{j+}, \ ~~j=1,2$$
on $\mathcal{U}_j\backslash D_j$. Moreover, $F_{j-}$ extends to a meromorphic map on $\mathcal{U}_j$ by \cite{DPW}.

On $(\mathcal{U}_1\cap\mathcal{U}_2)\backslash (D_1\cup D_2)$ we have
$F_{2-}V_{2+}=F_{1-}V_{1+}\mathcal{K}$, where $F_{j-}=I+\mathcal{O}(\lambda^{-1})$.
Hence
$$F_{2-}=F_{1-} \ ~\hbox{ on }\ ~ (\mathcal{U}_1\cap\mathcal{U}_2)\backslash (D_1\cup D_2).$$
Therefore this meromorphic map on $\mathcal{U}_1\cap\mathcal{U}_2$ extends meromorphically to $S^2$.
Set $\eta=F_-^{-1}dF_-.$
Then $\eta$ is a meromorphic $(1,0)-$form on $S^2$ of the form $\eta=\lambda^{-1}\eta_{-1}dz.$   As usual, $\eta$ will be called ``normalized potential'' of $f$. Moreover, by reversing the steps above we see that the map $f$ can be obtained from $\eta$ as usual, however,  we need to admit (up to two) singularities for the  extended  frame defined by $\eta$. Thus $\eta$ is justifiedly called the  normalized potential of $f$.
\end{proof}

\begin{remark}\
 \begin{enumerate}
  \item By removing just one point of $S^2$, like the north pole, one obtains a meromorphic map on $\mathcal{U}_1$ which, however, could have an essential singularity at the north pole. The use of $\mathcal{U}_1$ and $\mathcal{U}_2$ as above shows that $F_{1-}=F_{2-}$ on $\mathcal{U}_1\cap\mathcal{U}_2$ actually extends as a meromorphic frame to all points of $S^2$.
 \item From the proof above it is clear that the original harmonic map $f$ can be  reconstructed  from $\eta$ by the usual steps  (see Theorem \ref{DPW}). Note that the two procedures just discussed are inverse to each other.

 \item The theorem just proven can be used to construct all harmonic maps from $S^2$ into any Riemannian or pseudo-Riemannian symmetric space:

\emph{ Consider a meromorphic $(1,0)-$form on $S^2$ of the form $\eta=\lambda^{-1}\eta_{-1}dz $ which has a meromorphic solution $F_-$ on $S^2$ to the ode $ F_- \eta=dF_-, F_-(p_0,\lambda) = e.$}

Now an Iwasawa decomposition of $F_-$ makes sense for all points, where $F_-$ is in the open Iwasawa cell containing $e$, producing a ``frame" $F$ which is an actual frame on the set of non-singular points of $F_-$. Let $\tau$ denote the anti-holomorphic involution of $G^{\C}$  defining $G$.
Since $F$ is obtained via a Birkhoff decomposition of $\tau(F_{-})^{-1}F_-$ in the form
$\tau(F_{-})^{-1}F_-=\tau(V_{+})V_+^{-1}$, we obtain  $F=F_-V_+=\tau(F_-V_+)$, and its matrix entries are rational functions in the entries of $\tau(F_{-})^{-1}F_-$. In particular, the matrix entries of $F$ are rational functions in $u,v$, $z=u+iv$. Now a harmonic map is obtained by $f = F \mod K$.

 \item Since for pseudo-Riemannian spaces the Iwasawa splitting is not global in general, not every $\eta$ as above will yield a singularity free harmonic map on all of $S^2$.
The domain of definition of $f$ will need to be discussed separately in each case.
\end{enumerate}
\end{remark}

\begin{corollary} Let $f: S^2\rightarrow G/K$ be a harmonic map and $\eta$  its normalized potential with reference point $z_0$. Then away from the (finitely many) poles of $\eta$ there exists an extended frame $F$ for $f$ and a global Iwasawa splitting $F=F_-F_+$, $F_-^{-1}\frac{d}{dz}F_-=\eta$.
Moreover, $F_-$ is meromorphic on $S^2$.
\end{corollary}

Clearly, the normalized potential just discussed lives on $S^2 = M$.
 For general surfaces $M$ the potentials for harmonic maps only live on the universal cover $\tilde{M}$ of $M$. Therefore, if one wants to construct harmonic maps from some arbitrary Riemann surface $M$ into some  symmetric space $G/K$, one would have at least some indication for where to find an appropriate potential, if one would know that for every harmonic map from $M$ to  $G/K$ there is some potential  on $\tilde{M}$ which is the pullback of some differential one-form defined on $M$.
So far there is known \cite{Do-Ha5}, Theorem 3.2

\begin{theorem}
If $M$ is non-compact, then for every harmonic map from $M$ to any
 Riemannian or pseudo-Riemannian
symmetric space there exists a holomorphic potential defined  on the universal cover
$\tilde{M}$ of $M$ which is invariant under the fundamental group of $M$.
\end{theorem}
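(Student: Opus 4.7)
The plan is to obtain the global holomorphic potential by a \v{C}ech-style gluing of local holomorphic potentials, exploiting the fact that a non-compact Riemann surface is Stein. Given $f:M\rightarrow G/K$, choose an open cover $\{U_\alpha\}_{\alpha\in A}$ of $M$ by contractible charts. On each $U_\alpha$ the previous theorem supplies a holomorphic potential $\eta_\alpha = C_\alpha^{-1}dC_\alpha$ together with a gauge $V_{+,\alpha}:U_\alpha\rightarrow\Lambda^+ G^{\mathbb{C}}_\sigma$ satisfying $C_\alpha = F_\alpha V_{+,\alpha}$, where $F_\alpha$ is an extended frame of $f$ over $U_\alpha$.

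On each overlap $U_\alpha\cap U_\beta$ the potentials $\eta_\alpha$ and $\eta_\beta$ produce the same harmonic map, so by the gauge remark preceding this theorem there exists a holomorphic $W_{\alpha\beta}:U_\alpha\cap U_\beta\rightarrow\Lambda^+ G^{\mathbb{C}}_\sigma$ converting $\eta_\alpha$ to $\eta_\beta$, and the family $\{W_{\alpha\beta}\}$ satisfies the cocycle identity on triple overlaps. It therefore defines a holomorphic principal $\Lambda^+ G^{\mathbb{C}}_\sigma$-bundle $\mathcal{P}\rightarrow M$. A splitting $W_{\alpha\beta}=W_\beta W_\alpha^{-1}$ by holomorphic $W_\alpha:U_\alpha\rightarrow\Lambda^+ G^{\mathbb{C}}_\sigma$ yields, via the gauge action, locally defined holomorphic potentials $W_\alpha\cdot\eta_\alpha$ which agree on overlaps and thus glue to a global holomorphic potential $\tilde\eta$ on $M$; pulling back along the universal covering gives the $\pi_1(M)$-invariant potential on $\tilde M$ as asserted.

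The splitting argument is where the hypothesis on $M$ enters. By the Behnke-Stein theorem, $M$ is a Stein manifold and has the homotopy type of a $1$-dimensional CW-complex. Evaluation at $\lambda=0$ realizes $\Lambda^+ G^{\mathbb{C}}_\sigma$ as a fibration over the connected complex Lie group $K^{\mathbb{C}}$ with contractible fiber (the loops taking value $e$ at $\lambda=0$), so $\mathcal{P}$ is homotopically a $K^{\mathbb{C}}$-bundle. The classifying obstruction for such bundles on a $1$-complex lies in $H^2(M;\pi_1(K^{\mathbb{C}}))$, which vanishes since $M$ is homotopy equivalent to a graph; hence $\mathcal{P}$ is topologically trivial. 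By the Oka-Grauert principle on the Stein surface $M$, topological triviality implies holomorphic triviality, and the required splittings $W_\alpha$ exist.

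The main obstacle is the rigorous application of Oka-Grauert in this loop-group setting, since $\Lambda^+ G^{\mathbb{C}}_\sigma$ is an infinite-dimensional complex Banach Lie group rather than a finite-dimensional one. A concrete route that avoids general Banach Oka theory is to approximate $\Lambda^+ G^{\mathbb{C}}_\sigma$ by its finite-dimensional quotients obtained by truncating the Taylor expansion at $\lambda=0$: classical Grauert applies at each truncation level, producing splittings which are compatible with the projective system, and one controls the limit in the Wiener topology used in this paper. Verifying that this sequence of finite-level splittings converges and that the limit defines a genuine holomorphic $\Lambda^+ G^{\mathbb{C}}_\sigma$-valued cochain is the technical heart of the argument.
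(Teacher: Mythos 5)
The paper does not actually prove this statement: it is quoted from Dorfmeister--Haak \cite{Do-Ha5}, Theorem 3.2, so there is no internal proof to compare with. In that source the argument is Dolbeault-style rather than \v{C}ech-style: one fixes a system of lifts so that $F^{-1}\bar{\partial}F=\alpha_{\mathfrak{k}}''+\lambda\alpha_{\mathfrak{p}}''$ descends to $M$ (left monodromy cancels in the Maurer--Cartan form), and then solves one global $\bar{\partial}$-problem $\bar{\partial}V_+\cdot V_+^{-1}=-(\alpha_{\mathfrak{k}}''+\lambda\alpha_{\mathfrak{p}}'')$ for a $\Lambda^+G^{\mathbb{C}}_{\sigma}$-valued map on the open (hence Stein) surface $M$; invariance under $\pi_1(M)$ then comes for free because $V_+$ lives on $M$ itself. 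Your \v{C}ech gluing of local holomorphic potentials plus an Oka--Grauert splitting is the cohomological counterpart of the same vanishing phenomenon, so the route is legitimate in principle, but as written it has two genuine gaps.

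First, the cocycle identity for the transition gauges is not a consequence of the gauge remark you invoke: that remark produces a gauge only up to the stabilizer of the potential, and on an overlap the local extended frames are related by $F_{\alpha}=\chi_{\alpha\beta}(\lambda)F_{\beta}k_{\alpha\beta}$ with a $z$-independent $\chi_{\alpha\beta}\in\Lambda G_{\sigma}$, so an arbitrary choice of $W_{\alpha\beta}$ need not be a cocycle. You must first choose local lifts $F_{\alpha}(\cdot,1)$ of $f$ so that the $K$-valued transition functions $k_{\alpha\beta}$ form a cocycle, and then take specifically $W_{\alpha\beta}=V_{+\beta}^{-1}k_{\alpha\beta}V_{+\alpha}=C_{\beta}^{-1}\chi_{\alpha\beta}^{-1}C_{\alpha}$; with this choice the $\chi_{\alpha\beta}$ are themselves a (locally constant, real) cocycle and the $W_{\alpha\beta}$ satisfy the triple-overlap identity. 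The same bookkeeping is needed at the end: after splitting, the local holomorphic frames $\tilde{C}_{\alpha}=C_{\alpha}G_{\alpha}$ only agree up to left multiplication by the $\chi_{\alpha\beta}\in\Lambda G_{\sigma}$, and it is precisely because these factors lie in the real twisted loop group that Iwasawa decomposition of the solution of $dC=C\tilde{\eta}$ on $\tilde{M}$ reproduces the original harmonic map $f$; you assert the conclusion without this check. Second, and more seriously, the Oka--Grauert step for the Banach Lie group $\Lambda^+G^{\mathbb{C}}_{\sigma}$ is the analytic crux and you leave it open: the truncation-and-limit scheme in the Wiener topology is only sketched, and the convergence of the successive splittings is exactly what has to be proved. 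This gap can be closed either by citing Bungart's Oka principle for holomorphic bundles with Banach Lie structure group over Stein spaces, or by abandoning the \v{C}ech formulation and solving the global $\bar{\partial}$-problem directly as in \cite{Do-Ha5} (classical Grauert for the $K^{\mathbb{C}}$-part obtained by evaluation at $\lambda=0$, and a linear Banach-space-valued $\bar{\partial}$-equation, solvable since $H^1(M,\mathcal{O})=0$ on an open Riemann surface, for the $\lambda$-tail). As it stands, your proposal is a sound plan whose decisive analytic step is missing.
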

 \begin{remark}\
\begin{enumerate}
 \item
 By abuse of notation we sometimes say in the situation described above that the potential is defined on $M$.

\item For the case of compact surfaces $M$ we conjecture

\vspace{1.5mm}
{ \it Every harmonic map from any compact Riemann surface $M$ to any pseudo-Riemannian symmetric space
can be obtained from some meromorphic potential defined on $M$.}\vspace{1.5mm}\\
 In \cite{DoWa-sym1} we will prove this conjecture for all compact Riemann surfaces and for the  pseudo-Riemannian symmetric space occurring in our Willmore setting.
\end{enumerate}
\end{remark}

\subsection{The normalized potential for strongly conformally harmonic maps
and Wu's formula}

From the definition of the normalized potential (see Theorem \ref{DPW}) we can read off that
it is obtained from the $\lambda^{-1}-$part of the Maurer-Cartan form
of $F$ by conjugation by some matrix function with values in $K^\mathbb{C}$.
For known examples one can write down the normalized potential much more specifically.
In \cite{Wu}, Wu showed how one can determine locally the normalized potential
from the Maurer-Cartan form of the harmonic map $f$.

In this subsection we will make this explicit for the case of primary interest to this paper. As an immediate consequence of Theorem \ref{DPW} we obtain:

\begin{theorem} \label{normalized-potential-W} Let $\D$ be a contractible open subset of $\C$ and $0 \in \D$ a base point.

 Let $f: \D\rightarrow SO^+(1,n+3)/SO^+(1,3)\times SO(n)$ be a strongly
conformally harmonic map with $f(0)=eK$ and
$F:\D \rightarrow (\Lambda G_{\sigma})^0$ an extended frame of $f$
such that $F(0,\lambda) = I$. Then the normalized potential of $f$ with respect to the base point $z = 0$ is of the form
\begin{equation}\label{eq-potential-W}
\eta= \lambda^{-1}\eta_{-1}dz,\  \hbox{ with }\ \eta_{-1}=\left(
    \begin{array}{cc}
      0 & \hat{B}_1 \\
      -\hat{B}_1^tI_{1,3} & 0 \\
    \end{array}
  \right)dz,\hspace{3mm} \mbox{with} \hspace{3mm} \hat{B_1}^tI_{1,3}\hat{B}_1=0,
\end{equation}
where $\hat{B}_1dz$ is a meromorphic $(1,0)-$ form on $\D$ and $0$ is not a pole of $\hat{B}_1$.

Conversely,  any  normalized potential defined on $\D$ and satisfying  (\ref{eq-potential-W}) induces a strongly conformally harmonic
map from an open subset $0 \in \D_\mathcal{I} \subset \D$ into $SO^+(1,n+3)/SO^+(1,3)\times SO(n)$.
\end{theorem}

\begin{remark}
Using Theorem \ref{th-potential-sphere} one can formulate an analogous
result for $\D$ replaced by $S^2$.
\end{remark}

Similarly we obtain as an immediate consequence of Theorem \ref{pot-hol}:

\begin{theorem}\label{Holo-Willmore}
Let $\D$ be a contractible open subset of $\C$ and $0 \in \D$ a base point.
 Let $f: \D\rightarrow SO^+(1,n+3)/SO^+(1,3)\times SO(n)$ be a strongly
conformally harmonic map with $f(0)=eK$ and
$F:\D \rightarrow (\Lambda G_{\sigma})^0$ an extended frame of $f$
such that $F(0,\lambda) = I$. Then there exists a holomorphic potential for $f$ and each holomorphic potential for $f$ is of the form
\begin{equation} \label{eq-potential-H}
\xi=(\lambda^{-1}\xi_{-1}+\sum_{j\geq0}\lambda^j\xi_j)dz,\  \hbox{ with }\ \xi_{-1}=\left(
    \begin{array}{cc}
      0 & \hat{B}_1 \\
      -\hat{B}_1^tI_{1,3} & 0 \\
    \end{array}
  \right),\hspace{3mm} \mbox{and} \hspace{3mm} \hat{B_1}^tI_{1,3}\hat{B}_1=0,
\end{equation}
where $\xi_{j}dz$, $j=-1,0,\cdots,\infty$, are holomorphic $(1,0)-$ forms on $\D$.

Conversely,  any holomorphic potential $\eta$ defined on $\D$ and satisfying \eqref{eq-potential-H} induces  a strongly conformally harmonic
map from an open subset $0 \in \D_\mathcal{I} \subset \D$ into $SO^+(1,n+3)/SO^+(1,3)\times SO(n)$.
\end{theorem}

The matrix function $\hat{B_1}$ in the previous theorem (normalized potential) can be made much more explicit.

\begin{theorem} \label{Wu-W} ( Wu's Formula for Strongly Conformally Harmonic Maps)

Let $\D$ be a contractible open subset of $\C$ and $0 \in \D$ a base point.

 Let $f: \D\rightarrow SO^+(1,n+3)/SO^+(1,3)\times SO(n)$ be a strongly
conformally harmonic map with $f(0)=eK$ and
$F:\D \rightarrow (\Lambda G_{\sigma})^0$ an extended frame of $f$
such that $F(0,\lambda) = I$. Consider  $ F^{-1} dF = \alpha = \lambda^{-1} \alpha'_{\mathfrak{p}} + \alpha_{\mathfrak{k} }+ \lambda \alpha^{\prime \prime}_{\mathfrak{p}}$ and let $\delta_1$ denote the sum of the
holomorphic terms in the Taylor expansion of $\alpha'_{\mathfrak{p}}(\frac{\partial}{\partial z})$ about $0$, considered
as a form depending on $z$ and $\bar{z}$. The form $\delta_1$ is called the {\em holomorphic part}
of $\alpha'_{\mathfrak{p}}(\frac{\partial}{\partial z})$. Similarly, denote by $\delta_0$ the holomorphic part of $\alpha'_{\mathfrak{k}}(\frac{\partial}{\partial z})$.

Then the  normalized potential $\eta$ of $f$ with the origin as the reference point is given by
\begin{equation}
\eta=\lambda^{-1}\eta_{-1}dz\ \hbox{ with }\eta_{-1}=F_0(z)\delta_1F_0(z)^{-1},
\end{equation}
where $F_0: \D \rightarrow  G/K$ is  the solution to the equation $F_0(z)^{-1}dF_0(z)=\delta_0,\  F_0 (0) = I.$
\end{theorem}
\begin{proof}
 By Theorem  \ref{normalizationlemma} we know  the form of
$\alpha'_{\mathfrak{p}}(\frac{\partial}{\partial z}).$
Since the holomorphic part  $\delta_1$ of  $\alpha'_{\mathfrak{p}}(\frac{\partial}{\partial z})$ has the same form, we obtain by setting $\tilde{B}_1 (z, \bar{z} = 0)$
$$\delta_1=
\left(
    \begin{array}{cc}
      0 & \widetilde{B}_1 \\
      -{\widetilde{B} }_1^tI_{1,3} & 0 \\
    \end{array}
  \right)dz,\ \hbox{ with }\ {\widetilde{B} }_1^tI_{1,3}{\widetilde{B} }_1=0.
$$
Let
$F_0=\hbox{diag}(\hat{A}_1,\hat{A}_2):\D \rightarrow  SO^+(1,3,\mathbb{C})\times SO(n,\mathbb{C})$ be the solution to the equation
$F_0(z)^{-1}dF_0(z)=\delta_0,\ F_0(0)=I,$ where $\delta_0$ is the holomorphic part of $\alpha'_{\mathfrak{k}}(\frac{\partial}{\partial z})$.
Then Wu's Formula \cite{Wu} implies for the normalized potential
$$\eta_{-1}=F_0(z)\delta_1 F_0(z)^{-1}=\lambda^{-1} \left(
    \begin{array}{cc}
      0 & \hat{A}_1\widetilde{B} _1\hat{A}_2^{-1} \\
      -\hat{A}_2{\widetilde{B} }_1^tI_{1,3}\hat{A}_1^{-1} & 0 \\
    \end{array}
  \right)dz=\lambda^{-1}\left(
    \begin{array}{cc}
      0 & \hat{B}_1 \\
      -\hat{B}_1^tI_{1,3} & 0 \\
    \end{array}
  \right)dz.$$
Moreover,  from ${\widetilde{B} }_1^tI_{1,3}{\widetilde{B} }_1=0$ we obtain $\hat{B}_1^tI_{1,3}\hat{B}_1=0$.
\end{proof}
\begin{remark}
\
\begin{enumerate}
\item Note that one can assume w.l.g. that $\alpha$ has the special form stated in
Theorem \ref{normalizationlemma}. However this will not imply in general that $\eta_{-1}$ has such a special form. Later, in Section 6, we will show that only very special harmonic maps admit such kinds of normalized potentials.

\item
It is straightforward to verify that $\eta_{-1}$ in \eqref{eq-potential-W} satisfies
$$\eta_{-1}^3=0.$$ So $\eta$ is pointwise nilpotent as a Lie algebra-valued function.
However this does not imply that $\eta$ attains all values in a fixed nilpotent Lie subalgebra.
As a consequence, in general the corresponding conformally harmonic map is not of finite uniton type.
A standard example for this is the Clifford torus in $S^3$, which is of finite type and not of finite uniton type.

 \item In the last theorem we have considered local expansions of real analytic functions into power series in $z$ and $\bar{z}$ about $z=0$ and set $\bar{z} = 0$. So the factors entering into the formula for $\eta$ will in general  only be defined locally. However, $\eta$ itself is defined and meromorphic globally on $\D$.

If one wants to find globally defined factors for the representation of $\eta$ above, then one needs to analyze the proof of the corresponding result
of \cite{Do-Ko}.
\end{enumerate}
\end{remark}


\section{ A duality theorem for  harmonic maps into non-compact symmetric spaces}

The main goal of this section is to prove Theorem \ref{thm-noncompact}, which states that a harmonic map into a non-compact symmetric space induces naturally a harmonic map into the  compact dual symmetric space. This result is of great importance for the discussion of Willmore surfaces of finite uniton type \cite{DoWa2}, since it permits to apply the work of Burstall and Guest \cite{BuGu}, originally only applicable to harmonic maps into compact symmetric spaces, to the investigation of Willmore surfaces in spheres.

We will see that all Willmore 2-spheres are of finite uniton type
(the monodromy matrices all are trivial and all quantities of geometric interest are Laurent polynomials). Hence the work of \cite{BuGu} can be applied. Along these lines, in \cite{Wang-iso}  a new  Willmore 2-sphere in $S^6$ is produced  (see Theorem \ref{thm-example} ) which  solves a long open problem posed by Ejiri \cite{Ejiri1988}.

Note that for higher genus some of the Willmore surfaces are of finite uniton type, but in general others are not.

\subsection{Proof of Theorem \ref{thm-noncompact}}
Since in this section we discuss a general statement we return to the general setting  discussed in Section 4.1.
Hence we consider a connected, non-compact, semi-simple, real  Lie group $G$
and a non-compact,  inner, pseudo-Riemannian symmetric space $G/K$ defined by $\sigma$. Set $\mathfrak{g} =Lie(G)$ and
$\mathfrak{g} ^\C=\mathfrak{g}\otimes \C$. Denote by $\tau$ the complex anti-linear involution defining $\mathfrak{g}$ in  $\mathfrak{g} ^\C$. Obviously $\sigma\tau=\tau\sigma$.
Let $\theta $ be some Cartan involution of $\mathfrak{g} ^\C$ commuting with $\sigma$ and $\tau$.

In this section we will  represent the homogeneous space $G/K$ also in the form $G/K = \tilde{G}/\tilde{K}$, where $\tilde{G}$  is simply connected.
Let  $\tilde{G}^\C$ denote the complexification of $\tilde{G}$ \cite{Hochschild}. The Lie algebra of $G^\C$ is $\mathfrak{g} ^\C$ and  $\tilde{G}^\C$ is simply connected and  $\sigma, \tau$ and $\theta$ have extensions to pairwise commuting involutive group homomorphisms of  $\tilde{G}^\C$.
Since we start from a semi-simple Lie group $G$, we can assume that the natural image of $\tilde{G}$ in $\tilde{G}^\C$ is a closed subgroup of $\tilde{G}^\C$ \cite{Hochschild}.

Let $f:M\rightarrow G/K = \tilde{G}/\tilde{K}$ be a harmonic map with
an extended frame
$F:\tilde{M}  \rightarrow (\Lambda G_{\sigma})^0\subset\Lambda G^{\mathbb{C}}_{\sigma}$. Then we also have an extended frame
$\tilde{F}:\tilde{M}  \rightarrow \Lambda \tilde{G}_{\sigma}\subset\Lambda \tilde{G}^{\mathbb{C}}_{\sigma}$. (Note that here for $G = SL(2,\R)$ the last two ``inclusions''  actually describe inclusions of the image of $\tilde{G}$ under the natural homomorphism.
 Also recall that by Theorem \ref{th-potential-sphere} in the case $M = S^2$ the frame $F$ is permitted/required  to have singular points.)

To relate $f$ to a harmonic map $\hat{f}$ into a compact inner symmetric space, let $\tilde{U} = Fix ^{\theta}(\tilde{G}^\C)$.
Then $\tilde{U}$ is a maximal compact subgroup of  $\tilde{G}^\mathbb{C}$, and $\tilde{U}$ is connected and simply connected \cite{Aom}. Moreover, observe that
$\tilde{K}^\mathbb{C} = Fix ^{\sigma }(\tilde{G}^\C) \subset \tilde{G}^\C$ is a  connected complex Lie group    satisfying
$\tilde{K}^\C \cap \tilde{G} = \tilde{K}$.
Let $\mathfrak{g} = \mathfrak{k} + \mathfrak{p}$
be the decomposition of $\mathfrak{g}$ relative to $\sigma$ and $\mathfrak{g} = \mathfrak{h} + \mathfrak{m}$
the decomposition of $\mathfrak{g}$ relative to $\theta$.
Then $$\mathfrak{g} = \mathfrak{k} \cap  \mathfrak{h} + \mathfrak{k} \cap \mathfrak{m} +
\mathfrak{p} \cap\mathfrak{h} + \mathfrak{p} \cap \mathfrak{m} $$ as a direct sum of vector spaces.
Moreover, for the Lie algebra $\mathfrak{u}$ of $\tilde{U}$   we have
\begin{equation*}\begin{split}\mathfrak{u} & = \mathfrak{k} \cap  \mathfrak{h} + \mathfrak{p} \cap \mathfrak{h} +
i \left( \mathfrak{k} \cap  \mathfrak{m} + \mathfrak{p} \cap \mathfrak{m} \right) \\
&=
\left(\mathfrak{k} \cap  \mathfrak{h} + (i \mathfrak{k}) \cap (i\mathfrak{m}) \right) +
\left(\mathfrak{p} \cap  \mathfrak{h} + (i \mathfrak{p}) \cap (i\mathfrak{m}) \right)\\
&=
\mathfrak{k^{\mathbb{C}}} \cap \mathfrak{u}  + \mathfrak{p^{\mathbb{C}}} \cap   \mathfrak{u} .\end{split}\end{equation*}
It is easy to see now that $ \left( \mathfrak{k^{\mathbb{C}}} \cap
\mathfrak{u}\right)^{\mathbb{C}} = \left(\mathfrak{k} \cap  \mathfrak{h} + (i\mathfrak{k}) \cap (i \mathfrak{m}) \right)^{\mathbb{C}} = \mathfrak{k^{\mathbb{C}}} $ holds.
As a consequence, for the maximal compact Lie subgroup
$\tilde{U}$ of $\tilde{G}^{\mathbb{C}}$ constructed above we obtain
$$(\tilde{U}\cap \tilde{K}^{\mathbb{C}})^{\mathbb{C}}=\tilde{K}^{\mathbb{C}}$$ which follows, since both sides represent connected Lie subgroups of
$\tilde{G}^{\mathbb{C}}$ and have the same Lie algebra (Springer-Steinberg Theorem and  \cite{Helgason}, chapter VII, Theorem 7.2). Since $\sigma$ is inner, we obtain
$rank( \mathfrak{g}) = rank( \mathfrak{k})  $ and then also by using the last equation above
$rank( \mathfrak{u}) = rank( \mathfrak{u} \cap \mathfrak{k}^\C)$.
Hence

\begin{lemma}\label{lemma-inner}
The symmetric space $\tilde{U}/(\tilde{U}\cap \tilde{K}^{\mathbb{C}})$ is an inner symmetric space.
\end{lemma}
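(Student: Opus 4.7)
The plan is to establish the innerness by transporting the inner involution $\sigma$ on $G$ to $U$ via the common complexification $G^{\mathbb{C}}$. The ``symmetric space'' in question is the compact dual $U/(U \cap K^{\mathbb{C}})$, whose isotropy subgroup $U \cap K^{\mathbb{C}}$ has complexification $K^{\mathbb{C}}$ (as already verified on the Lie algebra level just before the lemma). Innerness will follow once we show that the involution on $U$ cutting out this isotropy is implemented by conjugation by an element of $U$ itself.

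First I would recall the standard rank characterization: a symmetric pair $(H,L)$ defined by an involution $\rho$ is inner iff $\mathrm{rank}\,\mathfrak{h} = \mathrm{rank}\,\mathfrak{l}$, equivalently iff $\rho = \mathrm{Ad}(\exp(\pi i H_0))$ for some $H_0$ in a Cartan subalgebra of $\mathfrak{l}$. By hypothesis $G/K$ is inner, so $\mathrm{rank}\,\mathfrak{k}=\mathrm{rank}\,\mathfrak{g}$ and there exists a Cartan subalgebra $\mathfrak{t}$ of $\mathfrak{g}$ contained in $\mathfrak{k}$ together with $H_0 \in \mathfrak{t}$ for which $\sigma = \mathrm{Ad}(t_0)$, $t_0 = \exp(\pi i H_0)$.

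Second, using that $\sigma$ and $\theta$ commute, I would arrange $\mathfrak{t}$ to lie inside the compact part $\mathfrak{k}\cap\mathfrak{h}$: since $G/K$ is inner there is a compact Cartan of $\mathfrak{g}$ (i.e.\ a Cartan subalgebra on which the Killing form is negative definite), and any such compact Cartan is conjugate into $\mathfrak{k}\cap\mathfrak{h}$. Then $iH_0\in i(\mathfrak{k}\cap\mathfrak{h})\subset \mathfrak{u}$, whence $t_0\in U$. The complex-linear extension of $\sigma$ to $G^{\mathbb{C}}$ therefore restricts to an involution $\sigma|_U=\mathrm{Ad}(t_0)|_U$ on $U$, which is manifestly inner. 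Its fixed-point group is $U\cap\mathrm{Fix}_{G^{\mathbb{C}}}(\sigma)=U\cap K^{\mathbb{C}}$, and using $(U\cap K^{\mathbb{C}})^{\mathbb{C}}=K^{\mathbb{C}}$ one sees that this is precisely the isotropy subgroup realizing the symmetric space structure identified in the statement of the lemma, so it is inner.

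The main obstacle is producing a Cartan subalgebra of $\mathfrak{g}$ that lies inside $\mathfrak{k}\cap\mathfrak{h}$, so that the implementing element $t_0$ lands in $U$ rather than merely in $G^{\mathbb{C}}$. This requires the three commuting involutions $\sigma$, $\tau$, $\theta$ to be simultaneously diagonalizable on a single maximal torus; the compact real form $U$ and the innerness hypothesis on $G/K$ together force this compatibility, but the verification needs careful bookkeeping with respect to the refined decomposition $\mathfrak{g}=\mathfrak{k}\cap\mathfrak{h}+\mathfrak{k}\cap\mathfrak{m}+\mathfrak{p}\cap\mathfrak{h}+\mathfrak{p}\cap\mathfrak{m}$ displayed just before the lemma.
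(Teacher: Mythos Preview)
Your second step contains a genuine gap, and the obstacle you flag at the end is not merely bookkeeping but is actually fatal to the approach as written. You claim that ``since $G/K$ is inner there is a compact Cartan of $\mathfrak{g}$,'' but innerness of $G/K$ gives a Cartan subalgebra inside $\mathfrak{k}$, not inside $\mathfrak{h}$; these are controlled by the two different involutions $\sigma$ and $\theta$. In the paper's main example $\mathfrak{g}=\mathfrak{so}(1,n+3)$ with $n$ even, one has $\mathrm{rank}\,\mathfrak{g}=n/2+2$ while $\mathrm{rank}\,\mathfrak{h}=\mathrm{rank}\,\mathfrak{so}(n+3)=n/2+1$, so $\mathfrak{g}$ has \emph{no} compact Cartan subalgebra at all, and a fortiori none in $\mathfrak{k}\cap\mathfrak{h}=\mathfrak{so}(3)\oplus\mathfrak{so}(n)$ (whose rank is $1+n/2$). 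Hence there is no way to choose $H_0$ with $iH_0\in\mathfrak{u}$, and your element $t_0$ cannot be placed inside $U$ by this route.

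The paper circumvents this by never looking for a Cartan of $\mathfrak{g}$ inside $\mathfrak{k}\cap\mathfrak{h}$. Instead it takes $h\in G$ with $\sigma=\mathrm{Ad}(h)$, passes to the centralizer $P=\mathrm{Cent}(h)^0$ in $G^{\mathbb{C}}$, and uses $\sigma\theta=\theta\sigma$ to show $h^{-1}\theta(h)$ is central, whence $P$ is $\theta$-stable. A maximal abelian subalgebra $\mathfrak{a}\subset \mathrm{Lie}\,P\cap\mathfrak{u}$ then complexifies to a Cartan of $\mathfrak{g}^{\mathbb{C}}$, so $\mathfrak{a}$ is a maximal torus of $\mathfrak{u}$; since $\mathfrak{a}$ centralizes $h$ it is pointwise fixed by $\sigma$, and Helgason's criterion finishes. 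The key point is that one produces a $\sigma$-fixed maximal torus of the \emph{compact} form $U$, not of $G$. Incidentally, your own first step already contains a shorter fix: since rank is invariant under complexification and $(\mathfrak{u})^{\mathbb{C}}=\mathfrak{g}^{\mathbb{C}}$, $(\mathfrak{u}\cap\mathfrak{k}^{\mathbb{C}})^{\mathbb{C}}=\mathfrak{k}^{\mathbb{C}}$, the equality $\mathrm{rank}\,\mathfrak{g}=\mathrm{rank}\,\mathfrak{k}$ immediately yields $\mathrm{rank}\,\mathfrak{u}=\mathrm{rank}(\mathfrak{u}\cap\mathfrak{k}^{\mathbb{C}})$, which is the rank criterion for $U/(U\cap K^{\mathbb{C}})$ to be inner.
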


\begin{example} For a strongly conformally harmonic map associated with a strong Willmore map $f:M\rightarrow S^{n+2}$ we have
$G=SO^+(1,n+3)$,  $\tilde{G} \cong Spin(1, n+3)^0$,
 $ G^{\mathbb{C}} \cong  SO(1, n+3, \C)$ and $ \tilde{G}^\C \cong Spin(1,n+3,\mathbb{C})$.
Moreover, we have
$ K=SO^+(1,3)\times SO(n)$ and $ \tilde{K}^\C = Spin (1,3,\C) \times Spin (n,\C).$
 Hence $\tilde{U}\cong Spin(n+4),$ and
$\tilde{U} \cap \tilde{K}^{\mathbb{C}}=Spin(4)\times Spin(n)$. On the Lie algebra level, we have
 \[Lie(\tilde{U})=\mathfrak{u}=\{A\in \mathfrak{so}(1,n+3,\mathbb{C})|A=(a_{jk}),\ ia_{1j}\in\mathbb{R},j=1,\cdots,n+4\},\]
  \[(\mathfrak{u}\cap\mathfrak{k}^{\mathbb{C}})^{\mathbb{C}}=\mathfrak{so}(1,3,\mathbb{C})\times \mathfrak{so}(n,\mathbb{C})=\mathfrak{k}^{\mathbb{C}}.\]
\end{example}

\ \\
{\em Proof of Theorem \ref{thm-noncompact}}

For the harmonic map $f : \tilde{M} \rightarrow \tilde{G}/\tilde{K}$ we  consider the non-global
Iwasawa decomposition in $\Lambda \tilde{G}^{\mathbb{C}}_{\sigma} $ relative to $\Lambda \tilde{G}_{\sigma}.$ 
 Restricting $\sigma$ to $\tilde{U}$, we can consider the
twisted loop group $\Lambda \tilde{U}^{\mathbb{C}}_{\sigma}$ and the
corresponding Iwasawa decomposition of $\Lambda \tilde{U}^{\mathbb{C}}_{\sigma}$ relative to $\Lambda \tilde{U}_{\sigma}.$
By our construction, complexifying $\tilde{G}$ and complexifying $\tilde{U}$ yields the same complex Lie group $\tilde{G}^{\mathbb{C}}$
and the holomorphic extensions of $\sigma$, considered as an involution of $\tilde{G}$ or considered as an involution of  $\tilde{U},$ yield the same involution of $\tilde{G}^{\mathbb{C}}$.
Therefore the  complex twisted loop groups, constructed by starting from $\tilde{G}$ or starting from $\tilde{U}$ are the same, that is,
\[\Lambda \tilde{G}^{\mathbb{C}}_{\sigma} =\Lambda \tilde{U}^{\mathbb{C}}_{\sigma},\ \hbox{ and } \ \Lambda^{+} \tilde{G}^{\mathbb{C}}_{\sigma}=\Lambda^{+} \tilde{U}^{\mathbb{C}}_{\sigma}.\]
Applying this we can also perform the following Iwasawa decomposition of $\Lambda \tilde{G}^{\C}_{\sigma}$:
\begin{equation} \label{otherIwasawa}
\Lambda \tilde{U}_{\sigma} \cdot\Lambda^{+} \tilde{G}^{\mathbb{C}}_{\sigma}=\Lambda \tilde{G}^{\mathbb{C}}_{\sigma}.
\end{equation}

Now let us turn to the harmonic maps. First we assume that $\tilde{M}=\mathbb{D}$ is a contractible open subset of $\C$. Then we derive a global extended frame $F(z,\bar{z},\lambda)$ of $f$. Applying the decomposition (\ref{otherIwasawa}) to the frame $F$ we obtain
\begin{equation}\label{eq-noncompact}
F=F_{\tilde{U}}\cdot W_{+},\hspace{3mm} F_{\tilde{U}}\in \Lambda \tilde{U}_{\sigma},\hspace{3mm}  W_{+}\in \Lambda^{+}\tilde{U}^{\mathbb{C}}_{\sigma}. \end{equation}
Writing as usual  $\alpha=F^{-1}dF=\lambda^{-1}\alpha_{\mathfrak{p}}'+
\alpha_0+\lambda\alpha_{\mathfrak{p}}'',$
we obtain
\[F_{\tilde{U}}^{-1}dF_{\tilde{U}}=\alpha_{\tilde{U}}=W_{+}\alpha W_{+}^{-1}-dW_{+}W_{+}^{-1}=
\lambda^{-1}W_0\alpha_{\mathfrak{p}}'W_0^{-1}+
\alpha_{\tilde{U},0} + \lambda \alpha_{\tilde{U},1}+ \dots.\]
Since $W_+\in \Lambda^+ \tilde{U}^{\mathbb{C}}_{\sigma}=\Lambda^+ \tilde{G}^{\mathbb{C}}_{\sigma}$, we have
$W_0\in \tilde{K}^{\mathbb{C}},$  and $ \sigma( W_0 )= W_0.$
Since $\alpha_{\mathfrak{p}}'\in   \mathfrak{p}^{\mathbb{C}}$,
we obtain moreover $$\sigma(\alpha_{\mathfrak{p}}')=-\alpha_{\mathfrak{p}}', ~~ \hbox{ and }~~ \sigma (W_0\alpha_{\mathfrak{p}}'W_0^{-1})=-W_0\alpha_{\mathfrak{p}}'W_0^{-1}.$$
Since $\alpha_{\tilde{U}}$ is fixed by the anti-holomorphic involution $\theta$ we infer

\[\alpha_{\tilde{U}}=\lambda^{-1}\tilde{\alpha}'_{\mathfrak{p}}+
\tilde\alpha_{\mathfrak{k}}+
\lambda\tilde{\alpha}''_{\mathfrak{p}},~~\hbox{ with }~~\tilde\alpha_{\mathfrak{k}} \in \mathfrak{u} \cap
\mathfrak{k}^{\mathbb{C}},\hbox{ and  }
\tilde{\alpha}''_{\mathfrak{p}} = \theta \left( \tilde{\alpha}'_{\mathfrak{p}}\right) \in
\mathfrak{p}^{\mathbb{C}}.\]
As a consequence, $F_{\tilde{U}}$ is the frame of a harmonic map $f_{\tilde U} : \tilde{M} \rightarrow \tilde{U}/(\tilde{U}\cap \tilde{K}^{\mathbb{C}})$, where actually
\[f_{\tilde{U}}=F_{\tilde{U}} \mod \ \tilde{U} \cap\tilde{ K}^\mathbb{C}.\]
Computing the Birkhoff decomposition of $F$ as well as the Birkhoff decomposition of $F_{\tilde{U}}$ we obtain
\[F_-F_+=F=F_{\tilde{U}}W_+=F_{\tilde{U},-}\cdot F_{\tilde{U},+}\cdot W_+,\]
with
\[F_-=I+O(\lambda^{-1}), \hspace{3mm} F_{\tilde{U},-}=
I+O(\lambda^{-1})\in \Lambda^-_*\tilde{U}^{\mathbb{C}}_{\sigma}= \Lambda^-_*\tilde{G}^{\mathbb{C}}_{\sigma}.\]
This implies
$F_-=F_{\tilde{U},-},$ whence we also have  $\eta=F_-^{-1}dF_-=F_{\tilde{U},-}^{-1}dF_{\tilde{U},-}.$

Conversely, let $h_{\tilde{U}}:\D\rightarrow\tilde{U}/ (\tilde{U} \cap \tilde{K}^{\mathbb{C}} )$ be a harmonic map from the unit disk $\D$ with $h_{\tilde{U}}|_{z=0}=e$ and $H_{\tilde{U}}$ an extended frame for
$h_{\tilde{U}}$ satisfying $H_{\tilde{U}}(z=0) = I$.
Then there exists a neighbourhood $\D_0\subset \D$ of $0$ on which
 $H_{\tilde{U}} = F V_+$ holds with $F\in \Lambda G_\sigma$, i.e. where
 $H_{\tilde{U}}$ has an Iwasawa decomposition relative to $ \Lambda G_\sigma$. Then $h \equiv F \mod K $ satisfies the claim.

\hfill$\Box $

\begin{remark} \label{duality}We would like to point out that the last part of  Theorem \ref{thm-noncompact} shows that the
``duality" between the harmonic maps into compact symmetric spaces  the ones in the dual non-compact symmetric spaces is in general only local, due to the fact that the corresponding Iwasawa decompositions for non-compact symmetric spaces is in general not global. It would be interesting to understand this duality in a more global sense.
\end{remark}

\subsection{ Applications to finite uniton type harmonic maps}

The notion of {\bf finite uniton}  type of some harmonic map  was coined by Uhlenbeck in \cite{Uh}.
For this definition she required special properties of  ``extended solutions'', objects used extensively in that paper. Moreover, her definition was in the context of maps defined on $S^2$ or simply-connected subsets of $S^2$. In \cite{BuGu} the definition was extended (also by restrictions on extended solutions) to harmonic maps from arbitrary Riemann surfaces $M$ to (compact inner) symmetric spaces.

In our work we primarily use  extended frames (not extended solutions).
And in view of Theorem \ref{th-potential-sphere} this also makes sense for
$M = S^2$. Therefore we prefer to give the definition of ``finite uniton type''
in terms of extended frames.

For this purpose we introduce the notion of  an {\it algebraic loop} as meaning
 that the  Fourier expansion in $\lambda$ is a Laurent polynomial, i.e. it has only finitely many terms.
Such loops
will be denoted by the subscript $``alg"$, like  $$\Lambda_{alg} G_{\sigma},\ \Lambda_{alg} G^{\mathbb{C}}_{\sigma},\
\Omega_{alg} G_{\sigma}.$$
 We define
$$ \Omega^k_{alg} G_{\sigma}:=\{\gamma\in
\Omega_{alg} G_{\sigma}|
Ad(\gamma)=\sum_{|j|\leq k}\lambda^jT_j \}\ .$$

Now can define the notion of {\it finite uniton type}.

\begin{definition}\label{def-uni}  Let $M$ be a Riemann surface, compact or non-compact. A harmonic map $f : M\rightarrow G/K$ is said to be of {\it finite uniton type} if some extended frame $F$ of $f$,  defined on the universal cover
$\tilde{M}$ of $M$ and satisfying $F(z_0,\lambda)=e$ for some base point $z_0\in M$, has the following two properties:
\begin{enumerate}[$(U1)$]
\item $F (z,\lambda)$ descends to a map from $M$ to
$(\Lambda G^{\C}_{\sigma})/K$,
 i.e. the map $F(z,\lambda):M\rightarrow (\Lambda G^{\C}_{\sigma})/K$ is well defined on $M$ (up to two singularities in the case of $M = S^2$) for all $\lambda \in S^1$.

\item  $F(z,\lambda)$ is a Laurent polynomial in $\lambda$.
\end{enumerate}
\end{definition}
\vspace{2mm}
Hence $f$ is of finite uniton type if and only if there exists an extended frame $F$ for $f$ which has a trivial monodromy representation and is a Laurent polynomial in $\lambda$. In particular, in this case  $F(M) \subset \Omega^k_{alg} G_{\sigma}$ for some $k$.

It is also easy to verify that $f$ has finite uniton type if and only if $F_-$,  obtained from $F$ by the Birkhoff decomposition
$F = F_- F_+$ with  $F_- = I + \mathcal{O} (\lambda^{-1})$, descends
to a map $F_- : M \rightarrow \Lambda^-_*G^\C_\sigma$
and is a Laurent polynomial.

\begin{definition}
 Let $f:M \rightarrow G$  be a harmonic map of finite uniton type from $M$ into $G/K$ and $F$ an extended frame for $f$ satisfying $(U_1)$ and $(U_2)$.

We say that $f$ has {\it finite uniton number k} if
$$ F(M)\subset \Omega^k_{alg} G_{\sigma},\
\hbox{ and } F(M)\nsubseteq \Omega^{k-1}_{alg} G_{\sigma}.$$
 In this case we write  $r(f)=k$.
\end{definition}

In \cite{DoWa2} the notion of finite uniton type is investigated in much more detail. In particular, it is shown there that the definition above coincides with the definition of \cite{BuGu} and \cite{Uh}.

Combining the definition just given with Theorem \ref{thm-noncompact} we obtain:

\begin{theorem}\
\begin{enumerate}
\item Let $f:\tilde{M}\rightarrow \tilde{G}/\tilde{K}$  be a harmonic map and $f_{\tilde{U}}$ the associated harmonic map into the compact symmetric space
$\tilde{U}/(\tilde{U}\cap \tilde{K}^{\mathbb{C}})$  as in Theorem \ref{thm-noncompact}. Then $f$ is of finite uniton type if and only if $f_{\tilde{U}}$ is of finite uniton type. Moreover, we have $r(f) = r(f_{\tilde{U}})$.

\item If $\tilde M=S^2$, then $f$ always is of finite uniton type.
\end{enumerate}
\end{theorem}

\begin{proof}
 (1) Let $F$, $F_{\tilde{U}}$ be the (local) extended frame of $f$ and $f_{\tilde{U}}$ respectively and assume that $f$ or $f_{\tilde{U}}$ is of finite uniton type and F or $F_{\tilde{U}}$the corresponding frame respectively.
 By \eqref{eq-noncompact},  the Fourier expansion of $F$ contains only finitely many  powers of $\lambda^{-1}$ if and only if $F_{\tilde{U}}$ has only finitely many powers of $\lambda^{-1}$ in its Fourier expansion. Using the reality of $F$ and $F_{\tilde{U}}$, we conclude that $F$ is a Laurent polynomial of $\lambda$ if and only if  $F_{\tilde{U}}$ is a Laurent polynomial of $\lambda$.  The equality of $r(f)$ and $r(f_{\tilde{U}})$ follows, since the extremal powers of $\lambda$ occurring in $F$ and $F_{\tilde{U}}$ are the same. Finally, again by  \eqref{eq-noncompact}
it is easy to verify that $F\mod K$ is defined on $\tilde{M}$ if and only if
$F_{\tilde{U}}\mod \tilde{U} \cap \tilde{K}^\mathbb{C}$ is defined on $\tilde{M}$.

(2). In the case of $\tilde{M}=S^2$, by (1) it suffices to show that $f_{\tilde{U}}$ is of finite uniton type.
 To show this, first we claim that $\mathbb{F}=F_{\tilde{U}}(z,\bar z, -1)F_{\tilde{U}}(z,\bar z, 1)^{-1}$ is a harmonic map into $\tilde{U}$ having $\Phi(z,\bar z,\lambda)=F_{\tilde{U}}(z,\bar z, \lambda)F_{\tilde{U}}(z,\bar z, 1)^{-1}$ as its extended solution. Here $F_{\tilde{U}}(z,\bar z, \lambda)$ is the extended frame of $f_{\tilde U}$ with the Maurer-Cartan form
  $\alpha_{\tilde{U}}=\lambda^{-1} \alpha'_{\mathfrak{p}}+
\alpha_{\mathfrak{k}}+
\lambda\alpha''_{\mathfrak{p}}$ and $F_{\tilde{U}}(z_0,\bar z_0,\lambda)=e$. Straightforward computations show
\[\mathbb{A}=\frac{1}{2}\mathbb{F}^{-1}d\mathbb{F}=-F_{\tilde{U}}(z,\bar z, 1)(\alpha'_{\mathfrak{p}}+\alpha''_{\mathfrak{p}})F_{\tilde{U}}(z,\bar z, 1)^{-1}=\mathbb{A}^{(1,0)}+\mathbb{A}^{(0,1)},\]
and
 \[\Phi(z,\bar z,\lambda)^{-1}d \Phi(z,\bar z,\lambda)=(1-\lambda^{-1})\mathbb{A}^{(1,0)}+(1-\lambda)\mathbb{A}^{(1,0)}.
 \]
 The claim now follows from Theorem 2.1 of \cite{Uh}. Moreover, since $\Phi(z_0,\bar z_0,\lambda)=e$, by Theorem 11.5 of \cite{Uh} the extended solution $\Phi(z,\bar z,\lambda)$ is a Laurent polynomial of $\lambda$. Hence $F_{\tilde{U}}(z,\bar z, \lambda)$ is a Laurent polynomial of $\lambda$ and as a consequence $f_{\tilde{U}}$ is of finite uniton type.
\end{proof}


 \section{Application of Loop group theory to Willmore surfaces}

In this section we will present applications of Wu's formula for two types of harmonic maps.


\subsection{Strongly conformally harmonic maps containing a constant light-like vector}

From Theorem \ref{th-Willmore-harmonic-U}, we see that there are two kinds of conformally harmonic maps satisfying $B_1^tI_{1,3}B_1=0$:
those which contain a constant lightlike vector and those which do not contain a constant lightlike vector.
Moreover, if a conformally harmonic map $f$ does not contain a lightlike vector, then $f$ will always be the conformal Gauss map of some Willmore map.
This class of Willmore maps corresponds exactly to all those Willmore maps which are {\em not }conformal to any minimal surface in $\mathbb{R}^{n+2}$,
since minimal surfaces in $\mathbb{R}^{n+2}$ can be characterized as Willmore surfaces with their conformal Gauss map containing a constant lightlike vector.
Since minimal surfaces in  $\mathbb{R}^{n+2}$ can be constructed by direct methods,
we are mainly interested in Willmore surfaces not conformally equivalent to minimal surfaces in  $\mathbb{R}^{n+2}$.
It is therefore vital to derive a criterion to determine whether a strongly conformally harmonic map $f$ contains a lightlike vector or not.
This is the main goal of this subsection. We state the main result and refer for a proof (which uses substantially the techniques discussed in the previous sections)  to \cite{Wang-Min}.

\begin{theorem}\cite{Wang-Min} \label{th-potential-light}Let $\tilde{M}$ denote the Riemann surface $S^2, \mathbb{C}$ or the unit disk of $\mathbb{C}$. Let $f:  \mathbb{D}\rightarrow SO^+(1,n+3)/SO^+(1,3)\times SO(n)$ be a strongly conformally harmonic map which contains a constant light-like vector.  Choose a base point $p \in \tilde{M}$ and assume that $f(p)=I_{n+4}\cdot K$holds. Let  $z$ denote a local coordinate with $z(p)=0$. Then the normalized potential of $f$ with reference point $p$ is of the form
 \begin{equation}\label{eq-w-minimal}
\eta=\lambda^{-1}\left(
                     \begin{array}{cc}
                       0 & \hat{B}_1 \\
                       -\hat{B}^{t}_1I_{1,3} & 0 \\
                     \end{array}
                   \right)dz,\ \hbox{ where }\ \hat{B}_{1}=\left(
\begin{array}{cccc}
 \hat{f}_{11} & \hat{f}_{12} & \cdots &  \hat{f}_{1n} \\
 -\hat{f}_{11} &  -\hat{f}_{12} & \cdots &  -\hat{f}_{1n} \\
 \hat{f}_{31} &\hat{f}_{32} & \cdots &  \hat{f}_{3n} \\
 i\hat{f}_{31} & i\hat{f}_{32} & \cdots &  i\hat{f}_{3n} \\
 \end{array}
\right).\end{equation}
Here all  $f_{ij}$ are meromorphic functions on $\tilde{M}$.

The converse  also holds:
Let  $\eta$ be a normalized potential of the form \eqref{eq-w-minimal}. Then $B_1^tI_{1,3}B_1=0$ and we obtain a strongly conformally harmonic map $f: \tilde{M}\rightarrow SO^+(1,n+3)/SO^+(1,3)\times SO(n)$. Moreover, $f$ contains a constant light-like vector and is of finite uniton type.
\end{theorem}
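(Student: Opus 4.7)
The plan is to apply Wu's formula (Theorem \ref{th-wu}) together with the structural analysis of case (b) of Theorem \ref{th-Willmore-harmonic-U}. Since $f$ contains a constant lightlike vector, we may choose (possibly after a constant gauge from $K$) a frame $F$ for which this vector has the form $Y_0 = \frac{1}{\sqrt{2}}(e_0 - \hat e_0)$. The case (b) analysis then forces $a_{13}+a_{23} \equiv 0$ on $\tilde M$, and combined with \eqref{spec-cond} also $a_{14}+a_{24} \equiv 0$; rescaling $Y_0$ further normalizes $a_{12}=0$. Thus the $(1,0)$-part $A_1\,dz$ of the $\mathfrak{k}$-diagonal block has precisely the shape treated in Lemma \ref{lemma-ode}, while $\alpha'_{\mathfrak{p}}$ still has off-diagonal block $B_1$ of the form \eqref{eq-B1}.

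Wu's formula then gives $\eta = \lambda^{-1} F_0 \delta_1 F_0^{-1}$, where $F_0 = \mathrm{diag}(\hat A_1,\hat A_2)$ solves $F_0^{-1} dF_0 = \delta_0$ with $F_0(0)=I$, and where $\delta_0, \delta_1$ are the holomorphic parts of the $\mathfrak{k}$- and $\mathfrak{p}$-blocks respectively. In particular, $\hat A_1$ is given by the explicit two-factor formula for $F_{01}$ from Lemma \ref{lemma-ode}, applied to the holomorphic Taylor parts of $a_{13},a_{14},a_{34}$. Passing to holomorphic parts preserves the row identities in \eqref{eq-B1}, so every column of the holomorphic part $B'_1$ of $B_1$ lies in the complex two-plane
\begin{equation*}
W = \mathrm{span}_{\mathbb{C}}\bigl\{(1,-1,0,0)^t,\,(0,0,1,i)^t\bigr\} \subset \mathbb{C}^4.
\end{equation*}
A direct check using the two factors in Lemma \ref{lemma-ode} shows that $W$ is $\hat A_1$-invariant: the first spanning vector is fixed by both factors, while the second is sent by the rotation factor to a scalar multiple of itself and by the other factor to itself plus a scalar multiple of $(1,-1,0,0)^t$. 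Hence every column of $\hat A_1 B'_1$ lies in $W$, and right-multiplication by $\hat A_2^{-1}$ (which only takes $\mathbb{C}$-linear combinations of columns) preserves this. The column condition $v_2 = -v_1$, $v_4 = i v_3$ translates exactly into the row structure of \eqref{eq-w-minimal}, identifying the $\hat f_{1j}$ and $\hat f_{3j}$ as the entries of the first and third rows of $\hat A_1 B'_1 \hat A_2^{-1}$.

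The meromorphicity of the $\hat f_{ij}$ on $\tilde M$ follows from the meromorphicity of the DPW factor $F_-$; for $\tilde M = S^2$ one invokes the Birkhoff-based construction of the extended frame in Theorem \ref{th-potential-sphere}, after which the argument above runs verbatim. The only genuine work is to verify the $\hat A_1$-invariance of the subspace $W$, which is why Lemma \ref{lemma-ode} was engineered with its specific two-factor form; once that lemma is in hand the remainder is a direct matrix-vector calculation combined with the bookkeeping of the structural identities inherited from \eqref{eq-B1}.
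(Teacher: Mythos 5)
Your proposal follows essentially the same route as the paper's own proof: gauge the frame so that the constant lightlike vector is $\tfrac{1}{\sqrt{2}}(e_0-\hat e_0)$, deduce the special shapes of $A_1$ and $B_1$, and apply Wu's formula with the explicit solution of Lemma \ref{lemma-ode}, the key point being that left multiplication by $F_{01}$ and right multiplication by $F_{02}^{-1}$ preserve the isotropic column plane $\mathrm{span}_{\mathbb{C}}\{(1,-1,0,0)^t,(0,0,1,i)^t\}$ --- exactly the computation the paper carries out by direct matrix multiplication. The only caveats are presentational: the relations $a_{12}=0$, $a_{13}+a_{23}\equiv 0$, $a_{14}+a_{24}\equiv 0$ and the persistence of the form \eqref{eq-B1} should be justified by differentiating the constancy of $Y_0$ in the adapted frame (as the paper does), rather than by ``case (b) forces'' --- containing a constant lightlike vector does not by itself put an arbitrary canonical frame into case (b) --- and the adapting gauge is $z$-dependent rather than a constant element of $K$; with these readings your argument coincides with the paper's.
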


\begin{remark} \
\begin{enumerate}
\item The proof of this result  requires a lengthy argument and will therefore be published in \cite{Wang-Min}.
It is not difficult to verify that $f$ is of finite uniton type.
Moreover, $f$ actually belongs to  the simplest case of finite uniton maps, the so-called $S^1-invariant$ maps (See \cite{BuGu}, \cite{Do-Es}).
For such harmonic maps, by a usually lengthy computation, one can derive the harmonic map directly without using loop groups,
since the Iwasawa splitting in this case is identical with the classical generalized Iwasawa splitting for non-compact Lie groups (see \cite{Do-Es}).

\item We would like to emphasize that the form of  $\hat{B}_1$ in equation (\ref{eq-w-minimal}) always occurs for the real analytic $B_1$ of Theorem \ref{normalizationlemma}. However, in the theorem above we obtain this form for the (meromorphic) normalized potential and these potentials  (obviously) describe a very special case  of strongly conformally harmonic maps.
\end{enumerate}
\end{remark}

\begin{corollary}
Let $f: \tilde{M}\rightarrow SO^+(1,n+3)/SO^+(1,3)\times SO(n)$ be a strongly conformally harmonic map
with its normalized potential $\eta$ of the form \eqref{eq-w-minimal} and of maximal $rank(\hat B_1)=2$. Then $f$ can not be
the conformal Gauss map of a Willmore surface. In particular, there exist strongly conformally harmonic maps which are not related to any Willmore map.
\end{corollary}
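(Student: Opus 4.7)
The plan is to combine Theorem \ref{th-light-converse} with case (b1) of Theorem \ref{th-Willmore-harmonic-U}, using Wu's formula (Theorem \ref{th-wu}) to bridge the rank hypothesis on $\hat B_1$ in the normalized potential and the rank hypothesis on $B_1$ in a local extended frame of $f$. First, Theorem \ref{th-light-converse} guarantees that any potential of the form \eqref{eq-w-minimal} yields a strongly conformally harmonic map $f$ containing a constant light-like vector; hence $f$ automatically falls into case (b) of Theorem \ref{th-Willmore-harmonic-U}, and it only remains to rule out case (b2), i.e.\ to verify that in a local extended frame of $f$ the maximal rank of $B_1$ equals $2$.

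For this step I invoke Wu's formula. Let $z_0$ be the base point of the potential and let $\tilde B_1(z)$ denote the holomorphic part at $z_0$ of the off-diagonal block $B_1(z,\bar z)$ of a local frame of $f$ near $z_0$. The block computation already carried out in \eqref{eq-wu-for-min} gives
\[
\hat B_1(z) \;=\; F_{01}(z)\,\tilde B_1(z)\,F_{02}(z)^{-1},
\]
with $F_{01},F_{02}$ invertible, so that $\mathrm{rank}\,\hat B_1(z) = \mathrm{rank}\,\tilde B_1(z)$ pointwise. Arguing by contradiction, suppose $\mathrm{rank}\,B_1\le 1$ on a neighborhood of $z_0$; then every $2\times 2$ minor of the real analytic matrix $B_1(z,\bar z)$ vanishes identically on that neighborhood. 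Since the holomorphic-part operation at $z_0$ commutes with the polynomial operations used to form minors, every $2\times 2$ minor of $\tilde B_1$ also vanishes identically, forcing $\mathrm{rank}\,\tilde B_1\le 1$ and hence $\mathrm{rank}\,\hat B_1\le 1$ near $z_0$. Since $\hat B_1$ is meromorphic on $\tilde M$, this rank bound propagates to the whole surface, contradicting the standing hypothesis $\mathrm{rank}\,\hat B_1 = 2$. Consequently $\mathrm{rank}\,B_1 = 2$ on an open dense subset near $z_0$, placing $f$ in case (b1) of Theorem \ref{th-Willmore-harmonic-U}. That theorem then asserts that $f$ is not, even locally, the conformal Gauss map of any Willmore immersion.

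The final clause of the corollary is immediate: potentials of the form \eqref{eq-w-minimal} with $\mathrm{rank}\,\hat B_1 = 2$ are trivial to exhibit, and by the preceding argument each of them yields a strongly conformally harmonic map which does not correspond to any Willmore map. The only delicate point in the proof is the holomorphic-part argument relating $\hat B_1$ to $B_1$; the remaining ingredients are direct appeals to results already established in Sections 3 and 4.
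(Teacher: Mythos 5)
Your proof is correct and follows the route the paper intends (the corollary is stated there without an explicit proof): Theorem \ref{th-light-converse} supplies the constant light-like vector, Wu's formula in the form $\hat B_1=F_{01}\tilde B_1F_{02}^{-1}$ transfers the rank hypothesis from the potential datum $\hat B_1$ to the frame datum $B_1$, and Theorem \ref{th-Willmore-harmonic-U}, case (b1), then excludes any enveloping Willmore immersion. Your holomorphic-part argument --- that taking the holomorphic part (complexify $B_1(z,\bar z)$ to $B_1(z,w)$ and set $w$ equal to the base value) is a ring homomorphism, so identical vanishing of all $2\times 2$ minors of $B_1$ forces the same for $\tilde B_1$, and hence for the meromorphic $\hat B_1$ on all of $\tilde M$ --- is precisely the bridging step the paper leaves implicit, and it is sound.
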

  Using the loop group method it is easy to see that harmonic maps satisfying the assumptions of the corollary always exist at least locally.
Moreover, combining Theorem \ref{th-Willmore-harmonic-U} and  Theorem \ref{th-potential-light}, and applying Wu's formula, it is straightforward to obtain the following
\begin{corollary} Let $f: \tilde{M}\rightarrow SO^+(1,n+3)/SO^+(1,3)\times SO(n)$ be a strongly conformally harmonic map
with its normalized potential $\eta$ of the form \eqref{eq-w-minimal} and of maximal $rank(\hat B_1)=1$. Then $f$ can not be
the conformal Gauss map of a Willmore surface if and only if up to a conjugation, $\hat B_1$ has one of the following forms
\begin{equation}\label{eq-w-min-2}
   \hat{B}_{1}=\left(
\begin{array}{cccc}
 0 & 0& \cdots &  0\\
 0 & 0& \cdots &  0\\
 \hat{f}_{31} &\hat{f}_{32} & \cdots &  \hat{f}_{3n} \\
 i\hat{f}_{31} & i\hat{f}_{32} & \cdots &  i\hat{f}_{3n} \\
 \end{array}\right), \hbox{ or } \hat{B}_{1}=\left(
\begin{array}{cccc}
 \hat{f}_{11} & \hat{f}_{12} & \cdots &  \hat{f}_{1n} \\
 -\hat{f}_{11} &  -\hat{f}_{12} & \cdots &  -\hat{f}_{1n} \\
 0 & 0& \cdots &  0\\
 0 & 0& \cdots &  0\\
 \end{array}\right).
\end{equation}
\end{corollary}
\begin{proof}  If $f$ can not be
the conformal Gauss map of a Willmore surface, then   by Theorem \ref{th-Willmore-harmonic-U}
it reduces to a harmonic map into $SO^+(1,n+1)/SO^+(1,1)\times SO(n)$ or $SO(n+2)/SO(2)\times SO(n)$. As a consequence, applying Wu's formula we see that the normalized potential reduces to $\Lambda \mathfrak{so}(1,n+1,\C)_{\sigma}$ or $\Lambda \mathfrak{so}(n+2,\C)_{\sigma}$. Now \eqref{eq-w-min-2} follows.
 The converse part is also straightforward. Since $\eta$ has the form stated in  \eqref{eq-w-min-2}, clearly  $f$  reduces to $SO^+(1,n+1)/SO^+(1,1)\times SO(n)$ or $SO(n+2)/SO(2)\times SO(n)$.
\end{proof}

\subsection{The conformal Gauss map of isotropic Willmore surfaces in $S^4$}

Another important class of Willmore surfaces is formed by the {\em totally isotropic} Willmore surfaces.
We recall from section 2.1 that $D$ denotes the $V_\C^\perp-$part
of the natural connection of $\C^4$. By $D_z^j$ we denote the $j-$fold iteration of $D_z$.

\begin{definition}(\cite{Ca}, \cite{Bryant1982}, \cite{Ejiri1988}) Let $y:M\rightarrow S^{n+2}$ be a conformal immersion with  $z$ a local coordinate of $M$ and $Y$ a  local lift. Then $y$ is called {\em totally isotropic} if the Hopf differential $\kappa$ of $y$ satisfies
 \begin{equation} \langle D_{z}^{j}\kappa,  D_{z}^{l}\kappa \rangle = 0, \hbox{ for } j,\ l=0,\ 1,\cdots.\end{equation}
\end{definition}
 Note that full  and totally isotropic surfaces only exist  in even dimensional spheres $S^{2m}$. They can be described as projections of holomorphic (anti-holomorphic) curves in the twistor bundle $\mathfrak{T}S^{2m}$ of $ S^{2m}$ (\cite{Ca}, \cite{Ejiri1988}).

However, in general, totally isotropic surfaces in $S^{2m}$ are not necessarily Willmore surfaces when $m>2$. Thus totally isotropic Willmore surfaces in $S^{2m}$ are of particular interest.
A much larger class of surfaces is formed by the isotropic surfaces, i.e. the surfaces, where in the definition above only the case $j = l = 0$ is required. Fairly little is known  about general isotropic  surfaces.

However, it is well-known that  all isotropic surfaces in $S^4$ are  Willmore surfaces (even S-Willmore surfaces), see \cite{Ejiri1988}, \cite{Ma2006}.

In this subsection we will characterize all isotropic Willmore surfaces in $S^4$.  An analysis of isotropic Willmore surfaces in $S^6$ will be presented in \cite{Wang-iso}. Concerning isotropic (Willmore) surfaces in $S^4$, we show
\begin{theorem}\label{th-potential-iso}
Let $y:M\rightarrow S^4$ be an isotropic surface from a simply connected Riemann surface $\tilde{M}$, with its conformal Gauss map $f=Gr$ defined in Section 2.1. Then the normalized potential of $Gr$ is of the form
\begin{equation}\label{eq-isotropic}\eta=\lambda^{-1}\left(
                     \begin{array}{cc}
                       0 & \hat{B}_1 \\
                       -\hat{B}^{t}_1I_{1,3} & 0 \\
                     \end{array}
                   \right)dz, \hbox{ with }
\hat{B}_{1}=\left(
                                                                       \begin{array}{cccc}
                                                                         \hat{f}_{11} & i\hat{f}_{11}   \\
                                                                          \hat{f}_{21} & i \hat{f}_{21}   \\
                                                                          \hat{f}_{31} &i\hat{f}_{31}   \\
                                                                         \hat{f}_{41} & i\hat{f}_{41}  \\
                                                                       \end{array}
                                                                     \right),\ -\hat{f}_{11} ^2+\hat{f}_{21} ^2+\hat{f}_{31} ^2+\hat{f}_{41} ^2=0.
\end{equation}
Moreover, $Gr$ is of finite uniton type with uniton number $r(f)$ at most 2. In particular, $f$ is $S^1$-invariant.

Conversely, let $\eta$ be defined on $\tilde{M}$ of the form \eqref{eq-isotropic} and let $f: \tilde{M}\rightarrow SO^+(1,5)/SO^+(1,3)\times SO(2)$ be the associated strongly conformally harmonic map. Then either $f$ is the conformal Gauss map of an isotropic S-Willmore surface in $S^{4}$, or $f$ takes values in   $SO^+(1,3)/SO^+(1,1)\times SO(2)$ or in $SO(4)/SO(2)\times SO(2)$ and is not the conformal Gauss map of any conformal immersion.
\end{theorem}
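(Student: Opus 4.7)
The plan splits into a forward direction, an interlude on uniton number, and a converse direction. For the forward direction, the defining isotropy $\langle\kappa,\kappa\rangle=0$ gives (for $n=2$) $k_1^2+k_2^2=0$, whence $k_2=\epsilon i k_1$ with $\epsilon\in\{\pm 1\}$; combined with the skew-symmetry of the normal connection $(b_{jl}+b_{lj}=0)$ this forces $\beta_2=\epsilon i\beta_1$, so the second column of $B_1$ in Proposition \ref{frame} equals $\epsilon i$ times the first. By Wu's formula (Theorem \ref{th-wu}), $\hat B_1=F_{01}\tilde B_1F_{02}^{-1}$, where $\tilde B_1$ is the holomorphic part of $B_1$; the proportional-columns structure is preserved because $F_{01}\in SO^+(1,3,\mathbb C)$ acts on rows only, while $F_{02}^{-1}\in SO(2,\mathbb C)$ is a (complex) rotation by some $\theta$ sending $(v,\epsilon iv)$ to $(e^{-\epsilon i\theta}v,\epsilon ie^{-\epsilon i\theta}v)$. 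The scalar null relation $-\hat f_{11}^2+\hat f_{21}^2+\hat f_{31}^2+\hat f_{41}^2=0$ is then exactly equivalent to $\hat B_1^tI_{1,3}\hat B_1=0$.

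For the finite uniton type and $S^1$-invariance, a direct computation using the proportional-columns form of $\hat B_1$ shows that $\eta(\partial_z)$ takes values in an abelian nilpotent Lie subalgebra of $\mathfrak{so}(1,5,\mathbb C)$; this is precisely the form of an $S^1$-invariant potential in the sense of Burstall-Guest. Their classification then yields the bound $r(f)\le 2$. Alternatively, the corollary to Theorem \ref{thm-noncompact} transports the problem to the compact dual $SO(6)/SO(4)\times SO(2)$, where the results of \cite{BuGu} can be invoked directly.

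For the converse, given $\eta$ of the form \eqref{eq-isotropic}, the extended frame produced by the Iwasawa decomposition has $F_+(z,\bar z,\lambda=0)\in K^{\mathbb C}$, and this conjugation preserves the $(v,\epsilon iv)$ column structure by the same row/rotation argument as in the forward direction; in particular $\mathrm{rank}\,B_1\le 1$. Theorem \ref{th-Willmore-harmonic-U} then leaves two alternatives: either $f$ is the conformal Gauss map of an S-Willmore surface (case (a) or case (b2)(i)), or $f$ reduces to a harmonic map into $SO^+(1,3)/SO^+(1,1)\times SO(2)$ or $SO(4)/SO(2)\times SO(2)$ as in case (b2)(ii). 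In the S-Willmore case, the inherited proportionality forces $k_2=\epsilon ik_1$ and $\beta_2=\epsilon i\beta_1$ on the induced surface; iterating the structure equations \eqref{eq-moving} together with the Willmore equation then propagates this to $\langle D_z^j\kappa,D_z^l\kappa\rangle=0$ for all $j,l\ge 0$, which is the isotropy condition. The principal obstacle is the careful bookkeeping required to track column-proportionality through the Iwasawa decomposition, whose real-analytic factor is not itself a holomorphic object; this parallels the technical analysis referenced in \cite{DoWa1} for Theorem \ref{th-light-converse}.
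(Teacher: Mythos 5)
Your proposal is correct and follows essentially the same route as the paper: isotropy forces the two columns of $B_1$ to be proportional with factor $i$ (the paper gets $D_{\bar z}\kappa\parallel\kappa$ from differentiating $\langle\kappa,\kappa\rangle=0$ in the rank-two complexified normal bundle, you get it by direct computation with $b_{jl}+b_{lj}=0$ — equivalent), and then Wu's formula together with the observation that $F_{01}$ acts on columns and $F_{02}^{-1}\in SO(2,\mathbb C)$ only rescales the pair $(v,iv)$ yields \eqref{eq-isotropic}. For the uniton bound, $S^1$-invariance and the converse, the paper itself only invokes the fact that $\eta(\partial_z)$ lies in a fixed rank-two nilpotent subalgebra and defers to \cite{DoWa1}, so your sketch (abelian nilpotent image, transport to the compact dual to invoke \cite{BuGu}, preservation of the column structure through the Iwasawa factor via $\mathrm{Ad}(F_{+,0}^{-1})$ with $F_{+,0}\in K^{\mathbb C}$, and then the case trichotomy of Theorem \ref{th-Willmore-harmonic-U} with $\operatorname{rank}B_1\le 1$) is consistent with, and somewhat more explicit than, the paper's own argument.
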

\begin{proof} Retaining the notation of Section 2.1  for $y$ and $Gr$, the isotropy property of $y$ shows that $\langle\kappa,\kappa\rangle=0$. Differentiating  this expression for $z$ one obtains
$\langle D_{\bar{z}}\kappa,\kappa\rangle=0.$
Noticing that the complex normal bundle is 2 dimensional and observing  that $\kappa$ is a  null vector section, it is clear that any other section perpendicular to $\kappa$  is necessarily  parallel to $\kappa$.  Hence we infer  that $D_{\bar{z}}\kappa$ is parallel to $\kappa$.
So without loss of generality, we can assume
\[\kappa=k_1\psi_1+ik_1\psi_2, \hbox{ and } D_{\bar{z}}\kappa=\beta_1\psi_1+i\beta_1\psi_2,\]
with $\psi_1,$ $\psi_2$ an orthonormal basis of sections of $V^{\perp}$ in the sense of Section 2.1.
Therefore the Maurer-Cartan form of $F(z,\bar{z},\lambda)$ w.r.t $y$ is\\
\[ F^{-1}F_z=\left(
                   \begin{array}{cc}
                     A_1 & B_1 \\
                     -B_1^tI_{1,3} & A_2 \\
                   \end{array}
                 \right),\
\hbox{ with }\
B_1=\left(
                                                                       \begin{array}{cccc}
                                                                          \sqrt{2}\beta_1 &  i\sqrt{2}\beta_1  \\
                                                                         -\sqrt{2}\beta_1 & -i\sqrt{2}\beta_1  \\
                                                                         -k_{1} & -ik_{1}   \\
                                                                         -ik_{1} & -ik_{1}  \\
                                                                       \end{array}
                                                                     \right).\]
To apply Wu's formula  (Theorem \ref{Wu-W}),
let  $\delta_{1}$, $\delta_{2}$ and  $\tilde{B}_1$  denote the ``holomorphic parts" of $ A_{1}$, $A_2$ and $B_1$ with respect to the reference point $z=0$ respectively, i.e., the part of the Taylor expansion of $A_1$, $A_2$ and $B_1$ which are independent of $\bar{z}$.
Let $F_{01}$ and $F_{02}$ be the solutions to the equations $F_{01}^{-1}dF_{01}=\delta_{1}dz,\  F_{01}|_{z=0}=I_4$ and
$F_{02}^{-1}dF_{02}=\delta_{2}dz,\  F_{02}|_{z=0}=I_2$ respectively.
By Wu's formula (Theorem \ref{Wu-W}), the normalized potential can be represented in the form
\[\eta=\lambda^{-1}\left(
                     \begin{array}{cc}
                       0 & \hat{B}_1 \\
                       -\hat{B}^{t}_1I_{1,3} & 0 \\
                     \end{array}
                   \right)dz,\ ~~\hbox{ with }\ ~~\hat{B}_{1}=F_{01}\tilde{B}_{1}F_{02}^{-1}.
\]
Noticing that here $\tilde{B}_1$ is of the form $\tilde{B}_1=\left(v_1, iv_1\right),\ \hbox{ with } \  v_1\in \mathbb{C}^4_1,$  it is immediate
to check that $F_{01}\tilde{B}_1 F_{02}^{-1}=\left(\hat{v}_1, i\hat{v}_1\right)$
holds with some vector $\hat{v}_1 \in \C^4$.

A straightforward computation shows that $\hat{B}_1^t I_{1,3}\hat{B}_1=0$ is equivalent with $  \hat{v}_1^tI_{1,3}\hat{v}_1=0$, and
\eqref{eq-isotropic} follows now.
]
In view of Definition \ref{def-uni}  of Section 5.2 the last statement is a corollary to the fact that $\eta(\frac{\partial}{\partial z})$ in \eqref{eq-isotropic} takes values in a nilpotent Lie subalgebra of
 degree of nilpotency $2$,  which shows that $F_-$ will be a polynomial in $\lambda^{-1}$ of degree at most $2$.

As to the converse part, assume that $\hat B_1=(\hat{v}_1,i\hat{v}_1)$. Let $f$ be the corresponding harmonic map with $B_1=(v_1,v_2)$. Then we have $B_1=V_{01}\hat{B}_{1}V_{02}^{-1}$ for some $V_{01}\in SO^+(1,3,\C)$ and $V_{02}\in SO(2,\C)$. So we have $v_2=iv_1$ holds. In particular $rank B_1\leq 1$. Applying Theorem \ref{th-Willmore-harmonic-U}, we see the Theorem holds except the isotropic property. But this is a consequence of the facts that $B_1$ being of the form $(v_1,iv_1)$ is independent of the choice of frames, and $B_1$ being of the form $(v_1,iv_1)$ is equivalent to the condition that the corresponding Willmore surface is isotropic (Note that these facts only hold in the case of codimension $2$).
\end{proof}

\begin{remark} \
\begin{enumerate}
\item Isotropic surfaces in $S^4$ provide another type of strongly conformally harmonic maps of finite uniton number $\leq2$,
which actually have an intersection with minimal surfaces in $\mathbb{R}^4$ (see e.g. the examples below). For more details, we refer to \cite{Mon}.
 And also note that  a Weierstrass type representation for isotropic minimal surfaces in $S^4$  has been presented in \cite{Bryant1982}.

 \item The case of isotropic Willmore surfaces in $S^6$ shows a very different situation, in particular by the fact that, in general, they can not be of finite uniton type \cite{Wang-iso}.
\end{enumerate}\end{remark}

By the classification theorems in \cite{Ejiri1988}, \cite{Mus1}, \cite{Mon},  a Willmore two-sphere in $S^4$ is either isotropic or is conformally equivalent to a minimal surface in $\mathbb{R}^4$. Applying Theorem \ref{th-potential-light} and Theorem \ref{th-potential-iso}, we obtain
\begin{corollary}The conformal Gauss map of a Willmore two-sphere in $S^4$ is of finite uniton type with $r(y)\leq2$ and hence is $S^1$-invariant.
\end{corollary}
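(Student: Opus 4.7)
My plan is to invoke the existing classification of Willmore two-spheres in $S^{4}$ and combine it with the two structural theorems \ref{th-potential-light}--\ref{th-light-converse} and \ref{th-potential-iso} already proved in this section. By Ejiri \cite{Ejiri1988}, Musso \cite{Mus1}, Montiel \cite{Mon} and Leschke--Pedit--Pinkall \cite{Le-Pe-Pin} (recalled in Example 2.14(2) of this paper), every Willmore immersion $y\colon S^{2}\to S^{4}$ is, up to M\"{o}bius transformations of $S^{4}$, either (i) the stereographic image of a complete minimal surface in $\mathbb{R}^{4}$ of finite total curvature with embedded planar ends, or (ii) a totally isotropic (super-conformal) Willmore sphere. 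I would simply treat these two cases separately.

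In case (ii) Theorem \ref{th-potential-iso} gives the conclusion directly: the normalized potential of the conformal Gauss map $Gr$ has the isotropic shape \eqref{eq-isotropic}, and $Gr$ is $S^{1}$-invariant and of finite uniton type with $r(Gr)\le 2$.

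In case (i) Theorem \ref{minimal} identifies the class of surfaces conformally equivalent to minimal surfaces in $\mathbb{R}^{4}$ with those Willmore surfaces whose conformal Gauss map contains a constant lightlike vector. Applying Theorem \ref{th-potential-light} with $\tilde M = S^{2}$ then shows that the normalized potential of $Gr$ has the form \eqref{eq-w-minimal}, and the converse Theorem \ref{th-light-converse} (imported from \cite{DoWa1}) asserts that every such strongly conformally harmonic map is $S^{1}$-invariant and of finite uniton type. To extract the sharp bound $r(Gr)\le 2$, I would exploit the explicit block structure of $\hat{B}_{1}$ in \eqref{eq-w-minimal}, which makes $\eta(\partial_{z})$ pointwise satisfy $\eta(\partial_{z})^{3}=0$ and confines it to a fixed $2$-step nilpotent Lie subalgebra of $\mathfrak{so}(1,5,\mathbb{C})$; this is exactly the computation alluded to at the end of the proof of Theorem \ref{th-potential-light}.

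The main obstacle is external to this section, namely the classification result for Willmore two-spheres in $S^{4}$, which I would import as a black box; all the internal work has already been carried out in Theorem \ref{th-potential-light}, Theorem \ref{th-light-converse} and Theorem \ref{th-potential-iso}, so the only piece that really needs to be added is the verification that in case (i) the potential values lie in a fixed (rather than merely pointwise) $2$-step nilpotent subalgebra, which is what upgrades ``finite uniton type'' to the sharp bound $r(Gr)\le 2$ and delivers $S^{1}$-invariance in the sense of \cite{BuGu,Do-Es}.
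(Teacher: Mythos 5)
Your proposal is correct and follows essentially the same route as the paper: invoke the classification of Willmore two-spheres in $S^4$ (isotropic, or conformal to a minimal surface in $\mathbb{R}^4$) and then apply Theorem \ref{th-potential-iso} in the isotropic case and Theorem \ref{th-potential-light} (together with Theorem \ref{th-light-converse} and the rank-two nilpotency of the potential, as the paper does via \cite{DoWa1}) in the minimal-surface case. The extra details you supply, such as citing Theorem \ref{minimal} to identify the constant lightlike vector, are exactly what the paper's two-line proof leaves implicit.
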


 A main result of \cite{Le-Pe-Pin} states that a Willmore torus in $S^4$ with non-trivial normal bundle is either isotropic or is conformally equivalent to a minimal surface in $\mathbb{R}^4$.
Together with the above results we derive the following
\begin{corollary}The conformal Gauss map of a Willmore torus in $S^4$ with non-trivial normal bundle is of finite uniton number at most 2 and hence $S^1$-invariant.
\end{corollary}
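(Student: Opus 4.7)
The plan is to reduce the result to Theorem~\ref{th-potential-iso} by showing that every Willmore torus $y:T^{2}\to S^{4}$ with non-trivial normal bundle is isotropic, in the sense that $\langle\kappa,\kappa\rangle\equiv 0$. Once this reduction is made, applying Theorem~\ref{th-potential-iso} to the universal cover of $T^{2}$ immediately produces the normalized potential in the form \eqref{eq-isotropic}, and yields both the uniton number bound $r(f)\leq 2$ and the $S^{1}$-invariance of the conformal Gauss map $f$.

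The first step is to invoke Bryant's holomorphic quartic differential $\mathcal{Q}\in H^{0}(T^{2},K^{4})$ on any Willmore surface in $S^{4}$. Locally $\mathcal{Q}$ is represented by $\langle\kappa,\kappa\rangle$ (with the appropriate conformal weight dictated by \eqref{eq-kappa-trans}), where $\langle\cdot,\cdot\rangle$ denotes the complex-bilinear extension of the inner product on the rank-$2$ real normal bundle $V^{\perp}$. Its holomorphicity is a direct consequence of the Willmore condition \eqref{eq-willmore} combined with the codimension-$2$ structure. Since $K$ is trivial on a torus, $\mathcal{Q}$ is a constant, so $\mathcal{Q}$ is either identically zero or nowhere zero.

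The second step excludes the nowhere-zero alternative using the non-triviality of the normal bundle. If $\mathcal{Q}$ is a nonzero constant, then $\kappa$ never vanishes, and a constant rescaling yields a global section $\hat\kappa$ of $V^{\perp}_{\mathbb{C}}$ with $\langle\hat\kappa,\hat\kappa\rangle\equiv 1$. Using the identification of the oriented real rank-$2$ bundle $V^{\perp}$ with a complex line bundle $L$ via the orientation, one has $V^{\perp}_{\mathbb{C}}=L\oplus\bar L$ with each summand isotropic for $\langle\cdot,\cdot\rangle$; the constraint $\langle\hat\kappa,\hat\kappa\rangle=1$ then forces both components of $\hat\kappa$ in this decomposition to be nowhere zero. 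In particular $L$ admits a nowhere-zero section and so is trivial, contradicting the hypothesis of non-trivial normal bundle. Hence $\mathcal{Q}\equiv 0$, and $y$ is isotropic.

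Finally, Theorem~\ref{th-potential-iso} applied to $y$ gives directly that the conformal Gauss map $f$ has normalized potential of the form \eqref{eq-isotropic}, is of finite uniton number at most $2$, and is $S^{1}$-invariant. The chief technical point is step two: verifying that the constancy of $\mathcal{Q}$ on the torus genuinely produces a trivialization of $V^{\perp}$. A clean way to handle this is via the $L\oplus\bar L$-decomposition above; alternatively, one can appeal to the classical Bryant-type identity relating the degree of the normal bundle of a surface in $S^{4}$ to the number of zeros of $\mathcal{Q}$, which forces $\mathcal{Q}\equiv 0$ whenever the normal bundle has nonzero Euler class and $\mathcal{Q}$ is holomorphic.
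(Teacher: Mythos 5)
Your proposal breaks down at its first step: there is no holomorphic quartic differential whose local representative is $\langle\kappa,\kappa\rangle$ for Willmore surfaces in $S^4$. First, by \eqref{eq-kappa-trans} the quantity $\langle\kappa,\kappa\rangle\,dz^4$ is not a section of $K^4$; it transforms with an extra factor $\lvert dz/dw\rvert^{-2}$, so it is a weighted object. On a torus you can absorb the weight by working in the flat coordinate, but then you still need $\partial_{\bar z}\langle\kappa,\kappa\rangle=2\langle D_{\bar z}\kappa,\kappa\rangle$ to vanish, and neither the Willmore equation \eqref{eq-willmore} nor the codimension-two structure gives this. A concrete counterexample: a minimal surface in $\mathbb{R}^3\subset\mathbb{R}^4$ (a catenoid, say) is Willmore in $S^4$, and for it $\kappa=e^{-\omega}\cdot(\text{Euclidean Hopf differential})\cdot\psi_1$, so $\langle\kappa,\kappa\rangle=c\,e^{-2\omega}$ with $c\neq 0$ constant and $e^{2\omega}$ the conformal factor; this is neither holomorphic nor constant although \eqref{eq-willmore} holds. (The genuine Bryant-type holomorphic quartic form of a Willmore surface is a different expression, built from the conformal Gauss map, and its vanishing does not characterize isotropy.) Consequently your dichotomy ``$\mathcal{Q}\equiv 0$ or nowhere zero'' and the resulting reduction ``non-trivial normal bundle $\Rightarrow$ isotropic'' are unproven; note that this reduction would render vacuous the second alternative of the Leschke--Pedit--Pinkall classification, which is precisely the case your argument silently discards, and your fallback appeal to a ``Bryant-type identity'' rests on the same nonexistent holomorphicity.

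The paper's proof is a two-case argument: by the main theorem of \cite{Le-Pe-Pin}, a Willmore torus in $S^4$ with non-trivial normal bundle is either isotropic or conformal to a minimal surface in $\mathbb{R}^4$. In the isotropic case Theorem \ref{th-potential-iso} gives the potential \eqref{eq-isotropic}, hence $r(f)\le 2$ and $S^1$-invariance; this is the part of your plan that is fine, including passing to the universal cover. But the minimal case must be handled as well: by Theorem \ref{minimal} such a surface has conformal Gauss map containing a constant lightlike vector, so Theorem \ref{th-potential-light} puts its normalized potential in the form \eqref{eq-w-minimal}, and Theorem \ref{th-light-converse} then yields finite uniton type and $S^1$-invariance for this branch too. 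Unless you supply a correct proof that the minimal branch is incompatible with non-trivial normal bundle, your argument is incomplete; as written, the key holomorphicity claim on which everything hinges is false.
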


We can also state  the classification theorem of Bohle \cite{Bo} on Willmore tori in $S^4$ as below. For the notion of ``finite type'' we refer to \cite{Bo}.

\begin{corollary}\cite{Bo} The conformal Gauss map of a Willmore torus in $S^4$ is either of finite type or of finite uniton number at most 2 (and hence is $S^1$-invariant in the latter case).
\end{corollary}


\subsection{On homogeneous Willmore surfaces admitting an Abelian group action}

\begin{definition}
A Willmore  immersion $y: M \rightarrow S^{n+2}$ is called homogeneous if there exists a group
$\Gamma:=\{(\gamma,R_{\gamma}):\ \gamma\in Aut(M),\ R\in SO^+(1,n+3)\}$ such that
\begin{equation}y(\gamma\cdot z)=R_{\gamma}\cdot y(z), \hbox{ for all } z\in M \hbox{ and } \ (\gamma,R_{\gamma})\in \Gamma
\end{equation}
and  the projection $\Gamma_M$ of $\Gamma$ onto the first factor acts transitively on $M$.
\end{definition}

 Clearly, with  $\Gamma$ also the closure
$\overline{\Gamma}$ in  $Aut(M)  \times SO^+(1, n+3)$ satisfies the conditions of the definition. We can thus assume that $\Gamma$ is a Lie group. Similarly, the closure
$\overline{\Gamma_M}$ in $Aut(M)$ is still abelian and transitive on $M$.

\begin{remark}
1. There are different definitions of the notion of a ``homogeneous surface". For now we will use the definition given above. We plan to discuss "homogeneous Willmore surfaces" in greater generality in a separate publication. In particular, there we will relate  our work to the
classification of Alekseevskii-Ferrand \cite{Alek}, \cite{Ferrand} .

2. We would like to point out that only the Riemann surfaces $\C, \C^*$ and (the torus)
$\mathbb{T}$ can occur as homogeneous surfaces in the sense of our definition above \cite{Kra}.
\end{remark}

\begin{theorem}
 Let $y:M \rightarrow S^{n+2}$ be a homogeneous Willmore immersion from a Riemann surface $M$.
Assume that the group $\Gamma$ is abelian. Then
\begin{enumerate}
\item  $M = \C$ and $\Gamma_M \cong$  all translations; or $M =S^1\times \R$ and
$\Gamma_M \cong S^1\times \mathbb{Z}$; or $M = \mathbb{T}$ and
$\Gamma_M \cong \mathbb{Z}^2$.

 \item  For the lift $\tilde{y}$ of $y$ to the universal cover $\C$ of $M$ there exists an  extended frame associated with the conformal Gauss map of $\tilde{y}$, which has a  constant
 Maurer-Cartan form $\alpha$ of the form stated in Proposition 2.2.
      Moreover, setting $\eta=\alpha'= \lambda^{-1} \eta_{-1} + \eta_0$, then $\eta$ is a constant, real, holomorphic potential, which generates the immersion
    $\tilde{y}$ (and hence also $y$)  and satisfies $\lbrack \eta \wedge \bar{\eta} \rbrack = 0$ and $\eta_{13} + \eta_{23} \neq 0$.

 \item Conversely, let $\eta$ be a constant real potential,  defined on $\C$ and  of the form $\eta = \lambda^{-1} \eta_{-1} + \eta_0$ satisfying $\lbrack \eta \wedge \bar{\eta} \rbrack = 0 $ and having the same form as $\alpha'$ in Proposition 2.2. Then it generates a homogeneous Willmore immersion (without branch points) for which its conformal Gauss map has an extended frame with constant Maurer-Cartan form.
\end{enumerate}
\end{theorem}
\begin{proof} (1). Since $M$ is a connected Riemann surface with  $Aut(M)$ containing
a two-dimensional abelian Lie subgroup, the universal covering of $M$ is $\C$ and the rest follows straightforwardly.

 (2). Choose the coordinate on $\C$,  we see that the group $\Gamma$ consists of all translations. Hence we obtain
for the extended frame of the conformal Gauss map the relation \[F(u+ iv, u-iv,\lambda) =
exp(u\mathfrak{X}) exp(vi\mathfrak{Y}) F(0,0,\lambda),\] where $\mathfrak{X}$ and $\mathfrak{Y}$ commute and only depend on $\lambda$.
Therefore the Maurer-Cartan form of $F$ is constant and
we obtain \[F^{-1} d F = \mathfrak{X}du + \mathfrak{Y}dv = (\mathfrak{X}+\mathfrak{Y})dz + i(\mathfrak{X}-\mathfrak{Y}) d\bar{z}.\]
So $\mathfrak{X}-\mathfrak{Y}$ only involves non-negative powers of $\lambda$ and $\mathfrak{X}+\mathfrak{Y}$ only non-positive powers of lambda. Moreover, the
matrices $\mathfrak{X}-\mathfrak{Y}$ and $\mathfrak{X}+\mathfrak{Y}$ commute.
As a consequence, assuming w.l.g. $F(0,0,\lambda) = e$, we also obtain
\[F(u+ iv, u-iv,\lambda) =
exp(u\mathfrak{X}) exp(v \mathfrak{Y})  = \exp (z (\mathfrak{X}+\mathfrak{Y})) \exp( \bar{z} i(\mathfrak{X}-\mathfrak{Y})).\]
Since this is some Birkhoff  decomposition of $\exp (z (\mathfrak{X}+\mathfrak{Y}))$
we conclude that \[ \eta=\exp (z (\mathfrak{X}+\mathfrak{Y}))^{-1}d\exp (z (\mathfrak{X}+\mathfrak{Y}))=(\mathfrak{X}+\mathfrak{Y})dz =(\lambda^{-1}\eta_{-1}+\eta_0)dz\] is a holomorphic potential for the given immersion of the type stated.

(3). Assume now $\eta$ is of the special form stated, then
$\eta=(\lambda^{-1}\mathfrak{B}+\mathfrak{A})dz$ with $\mathfrak{A},$ $\mathfrak{B}$ constant matrices satisfying
\[[\lambda^{-1}\mathfrak{B}+\mathfrak{A},\lambda\bar{\mathfrak{B}}+\bar{\mathfrak{A}}]=0.\]
Then
\[e^{z(\lambda^{-1}\mathfrak{B}+\mathfrak{A})}=e^{z(\lambda^{-1}\mathfrak{B}+\mathfrak{A})+\bar{z}(\lambda\bar{\mathfrak{B}}+\bar{\mathfrak{A}})}\cdot e^{-\bar{z}(\lambda\bar{\mathfrak{B}}+\bar{\mathfrak{A}})}\]
is an Iwasawa decomposition, producing the extended frame
\[F(z,\bar{z},\lambda)=e^{z(\lambda^{-1}\mathfrak{B}+\mathfrak{A})+\bar{z}(\lambda\bar{\mathfrak{B}}+\bar{\mathfrak{A}})}.\]
This implies that the conformally harmonic map $f = F \mod K$ is conformally homogeneous.
Since the Maurer-Cartan form of $F(z,\bar{z},\lambda)=(e_0,\hat e_{0}, e_1,e_2,\psi_1,\cdots,\psi_n)$ is of the form stated in Proposition 2.2, $F$ is the conformal Gauss map of some immersion $y=[e_0-\hat{e}_0]$. The harmonicity of the conformal Gauss map indicates that $y$ is a Willmore immersion.
\
\end{proof}

\begin{corollary} Every homogeneous Willmore torus in $S^{n+2}$ can be obtained from a constant potential of the form
\begin{equation}\label{eq-homo-int}
  \eta = \lambda^{-1} \eta_{-1} + \eta_0,~~\hbox{ with }~[\eta_{-1},\overline{\eta_0}]=0,\  [\eta_{-1},\overline{\eta_{-1}}]+ [\eta_{0},\overline{\eta_0}]=0,
\end{equation}
and $ \eta_{-1}$, $ \eta_{0}$ being of the same form  as $ \alpha_{\mathfrak{p}}'(\frac{\partial}{\partial z})$ and $\alpha_{\mathfrak{k}}'(\frac{\partial}{\partial z})$ respectively in Proposition \ref{frame}.
\end{corollary}

A special case of homogeneous strongly conformally harmonic maps is produced by ``{\bf vacuum potentials}". Recall the definition of a vacuum potential \cite{BP}
$$\eta=(\lambda^{-1}\mathfrak{B})dz,\ \hbox{ with } [\mathfrak{B},\bar{\mathfrak{B}}]=0.$$
Such a potential always produces a harmonic map $f$. For $f$ being a strongly conformally harmonic map, one needs to assume that
$$\mathfrak{B}=\left(
                   \begin{array}{cc}
                    0 & B_1 \\
                    - B_1^tI_{1,3} & 0 \\
                   \end{array}
                 \right),\ \hbox{ with }\ B_1^tI_{1,3}B_1=0.$$
Then, as shown in Lemma \ref{normalizationlemma}, there exists some $L_1\in SO^+(1,3)$ such that $L_1B_1$ is of the form stated in Lemma \ref{normalizationlemma}.
By Theorem \ref{th-potential-light}, one sees that for $f$ being the conformal Gauss map of some Willmore map $y$, the maximal rank of $B_1$ must be one.
Hence we may assume that
$$B_1=(\mathrm{v}_1,\cdots,\mathrm{v}_n) \ ~\hbox{ with }\ ~\mathrm{v}_j=(a_j+ib_j) \mathrm{v}_0,\ a_j, b_j\in\R,\ j=1,\cdots,n. $$
Here $\mathrm{v}_0\in \hbox{Span}_{\C}\{(1,-1,0,0)^t,(0,0,1,i)^t\}.$
If $\langle\mathrm{v}_0,\mathrm{v}_0\rangle=0$, then $\mathrm{v}_0\in \hbox{Span}_{\C}\{(1,-1,0,0)^t\}$.
So we see that $f$ reduces to a harmonic map into $SO(1,n)/SO(1,1)\times SO(n)$. If  $\langle\mathrm{v}_0,\mathrm{v}_0\rangle\neq0$, there exists another
$L_2\in SO^+(1,3)$ such that $L_2\mathrm{v}_0\in \hbox{Span}_{\C}\{(0,0,1,i)^t\}$.
As a consequence,  $f$ reduces to a harmonic map into $SO(n+2)/SO(2)\times SO(n)$.

In a sum, we obtain
\begin{proposition}Let $f$ be a vacuum solution which is also a strongly conformally harmonic map. Then it can not be the conformal Gauss map of any Willmore surface.
\end{proposition}

\begin{example}Let $y=[Y]: S^1\times R^1\rightarrow S^4$ be the cylinder
  \begin{equation}
Y=\left( \cosh av, \sinh av, \cos u \cos bv, \cos u \sin bv, \sin u \cos bv, \sin u \sin bv \right)^t
\end{equation}
with $a^2+b^2=1,\ a,b\in \mathbb{R}$. Note, if $a=0$ we obtain the Clifford torus in $S^3\subset S^4$, and if $b=0$ we obtain the round sphere with the north pole removed. (For a detailed discussion on Willmore tori in $S^4$, we refer to \cite{Bo} and \cite{Le-Pe-Pin}.)

A direct computation shows that $y$ is a homogeneous Willmore immersion, with a holomorphic potential
 \begin{equation}
\tilde\eta=\left(
                              \begin{array}{cccccc}
                                0 & 0 & \frac{1}{4\sqrt{2}} & \frac{-i(1+2a^2)}{4\sqrt{2}} & \frac{iab\lambda^{-1}}{2\sqrt{2}} & 0 \\
                                0 & 0 & \frac{3}{4\sqrt{2}} & \frac{-i(1+2b^2)}{4\sqrt{2}} & \frac{-iab\lambda^{-1}}{2\sqrt{2}} & 0 \\
                                \frac{1}{4\sqrt{2}} & \frac{-3}{4\sqrt{2}} & 0 & 0 & 0  & \frac{ ib \lambda^{-1}}{2}\\
                                \frac{-i(1+2a^2)}{4\sqrt{2}} & \frac{i(1+2b^2)}{4\sqrt{2}} & 0 & 0 & 0 &  \frac{ b\lambda^{-1}}{2}\\
                                 \frac{iab\lambda^{-1}}{2\sqrt{2}} & \frac{iab\lambda^{-1}}{2\sqrt{2}} & 0 & 0 & 0 & -\frac{a}{2}\\
                                0  & 0 & \frac{-ib\lambda^{-1}}{2} & \frac{-b\lambda^{-1}}{2}  & \frac{a}{2} & 0\\
                              \end{array}
                            \right)dz.
\end{equation}
\end{example}
\begin{example}\cite{Wang-Homo}  Let $y:\C \rightarrow S^5(\sqrt{1+2b^2})$
  \begin{equation}
y= \left(\cos  u \cos \frac{v}{\sqrt{3}}, \cos u \sin \frac{v}{\sqrt{3}}, \sin  u \cos \frac{v}{\sqrt{3}}, \sin u \sin \frac{v}{\sqrt{3}}, \sqrt{2}b\cos \frac{v}{\sqrt{3}b}, \sqrt{2}b\sin \frac{v}{\sqrt{3}b}\right)^t
\end{equation}
with $b\in \mathbb{R}^+$. Obviously $y$ is a torus if and only if $b\in \mathbb{Q}^+$.
A direct computation shows that $y$ is a homogeneous Willmore immersion, with a holomorphic potential
 \begin{equation}
\tilde\eta=\left(
                              \begin{array}{ccccccc}
                                0 & 0 & s_1 & s_2 & 0& 0 & \lambda^{-1}\sqrt{2}\beta_3 \\
                                0 & 0 & s_3 & s_4 & 0& 0 & -\lambda^{-1}\sqrt{2}\beta_3 \\
                               s_1& -s_3 & 0 & 0 & -\lambda^{-1}k_1 & -\lambda^{-1}k_2 & 0\\
                               s_2& -s_4 & 0 & 0 & -\lambda^{-1}ik_1 & -\lambda^{-1}ik_2 & 0\\
                               0& 0 & \lambda^{-1}k_1 & \lambda^{-1}ik_1 &0 & 0 & -a_{13}\\
                               0& 0& \lambda^{-1}k_2 & \lambda^{-1}ik_2 & 0 & 0 & -a_{23}\\
                               \lambda^{-1}\sqrt{2}\beta_3 & \lambda^{-1}\sqrt{2}\beta_3 & 0 & 0 &a_{13}  & a_{23} & 0\\
                              \end{array}
                            \right)dz,
\end{equation}
with
\[k_1=\frac{\sqrt{4b^2+2}}{12b},k_2=\frac{-i\sqrt{3}}{6}, s=\frac{4b^2-1}{18 b^2}, \beta_3=\frac{-i\sqrt{2}(4b^2-1)}{72b^2}, a_{13}=\frac{-i\sqrt{2b^2+1}}{6b}, a_{23}=\frac{\sqrt{6}}{6},\]
and
\[s_1=\frac{\sqrt{2}(20b^2+1)}{144b^2},\ s_2=\frac{-i\sqrt{2}(12b^2-1)}{48b^2},\ s_3=\frac{\sqrt{2}(52b^2-1)}{144b^2}, s_2=\frac{-i\sqrt{2}(12b^2+1)}{48b^2}.\]
Note that in this case one obtains the Ejiri's torus when $b=1$ \cite{Ejiri1982}.

Moreover, if we assume that $b=\frac{j}{l}$ with $j,l\in \mathbb{Z}^+$ and $(j,l)=1$, we obtain a torus with period $2\pi (1+il\sqrt{3})$. So the corresponding Willmore functional is
\[W(\mathbb{T}^2)=4\int_{0}^{2\pi}du\int_{0}^{2\pi l\sqrt{3}}dv\left(|k_1|^2+|k_2|^2\right)=\frac{16\pi^2\sqrt{3}}{9}\left(l+\frac{j^2}{8l}\right).\]
\end{example}
\ \\

\section{Appendix A: Two Decomposition Theorems}

In this section we discuss the basic decomposition theorems for loop groups.
We will use the notation introduced in section 4.
 Since the decomposition theorems usually are proven for loops in simply-connected groups we will assume in this section that $G$ is simply-connected and will therefore always use, to avoid confusion,  the notation $\tilde{G}$. We would like to point out that the case of the group $G = SL(2,\R)$ is included in our presentation, but needs, at places, some interpretation, since in this case $\tilde{G}$ is not a matrix group, while $\tilde{G}^\C = SL(2,\C)$ is a matrix group and ``contains''  $\tilde{G}$ as a (non-isomorphic) image of the natural homomorphism.


\subsection{Birkhoff Decomposition}

 Starting from $\tilde{G}$ and an inner involution $\sigma$,  there is a unique extension, denoted again by $\sigma$, to $\tilde{G}^\C$.
The corresponding fixed point subgroups will be denoted by $\tilde{K}^\C$. Note that the latter group is connected, by a result of Springer$-$Steinberg.

Next we will consider the twisted loop group
$\Lambda \tilde{G}^{\C}_{\sigma} $.
General loop group theory implies that, since we  consider inner involutions only, we have  $\Lambda \tilde{G}^{\C}_{\sigma}  \cong
\Lambda \tilde{G}^{\C}$.
On the other hand,  we know $\pi_0 (\Lambda H) = \pi_1 (H)$ for any connected Lie group $H$, whence we infer that
$\Lambda \tilde{G}^{\C}_{\sigma} $ is connected. And since $\tilde{K}^\C$ is connected, also the groups $\Lambda^{+} \tilde{G}^{\mathbb{C}}_{\sigma}$
and $\Lambda^{-} \tilde{G}^{\mathbb{C}}_{\sigma}$ are connected.

For the loop group method used in this paper two decomposition theorems are of crucial importance. The first is

\begin{theorem}\label{thm-birkhoff}(Birkhoff decomposition theorem) Let $\tilde{G}^{\mathbb{C}}$ denote a simply-connected complex Lie group
with connected real form $\tilde{G}$ and let $\sigma$ be an inner involution of $\tilde{G}$ and  $\tilde{G}^{\mathbb{C}}$. Then the following statements hold
\begin{enumerate}
\item  $\Lambda \tilde{G}^{\C}_{\sigma} =
\bigcup \Lambda^{-} \tilde{G}^{\mathbb{C}}_{\sigma} \cdot \tilde{\omega} \cdot \Lambda^{+} \tilde{G}^{\mathbb{C}}_{\sigma},$
where the $\tilde{\omega}$'s are representatives of the double cosets.\\

\item The multiplication
 \begin{equation}
\Lambda^-_* \tilde{G}^{\mathbb{C}}_{\sigma} \times\Lambda^+ \tilde{G}^{\mathbb{C}}_{\sigma}\rightarrow
\Lambda^-_* \tilde{G}^{\mathbb{C}}_{\sigma} \cdot\Lambda^+ \tilde{G}^{\mathbb{C}}_{\sigma}
\end{equation}
is a complex analytic diffeomorphism and the (left) ``big cell"
$ \Lambda^-_* \tilde{G}^{\mathbb{C}}_{\sigma}
\cdot\Lambda^+ \tilde{G}^{\mathbb{C}}_{\sigma}$ is open and dense in
$ \Lambda  \tilde{G}^{\mathbb{C}}_{\sigma}$.\\

\item More precisely, every $g$  in $ \Lambda  \tilde{G}^{\mathbb{C}}_{\sigma}$ can be written in the form
 \begin{equation}g=g_-\cdot \tilde{\omega}\cdot g_+\end{equation}
with $g_{\pm}\in \Lambda^{\pm}  \tilde{G}^{\mathbb{C}}_{\sigma}$,
 and $\tilde{\omega} : S^1 \rightarrow  \tilde{T} \subset Fix^\sigma (\tilde{G}^\C)$ a homomorphism , where $\tilde{T}$ is a maximal compact torus in $\tilde{G}^\C$ fixed pointwise by $\sigma$.\\
\end{enumerate}
\end{theorem}

\begin{proof} The decomposition above has been proven for algebraic loop groups in \cite{Ka-P}.
Our results follow by completeness in the Wiener Topology (see e.g. \cite{DPW}).
\end{proof}

\begin{remark}
\begin{enumerate}
\item Our actual goal is to obtain a Birkhoff decomposition theorem for $(\Lambda G_\sigma^\C)^0$. The restriction to the connected component
is possible, since we will always consider maps from connected surfaces into the loop group which attain the value $I$ at some point of the surface.

This is very fortunate: since we have obtained above  a Birkhoff decomposition for the simply connected  complexified universal group, we will attempt to obtain the desired Birkhoff decomposition by projection.
Since $\Lambda \tilde{G}^{\C}_{\sigma} $ is connected,
applying the natural extension of the natural projection  $\tilde{ \pi}^\C : \tilde{G}^{\mathbb{C}} \rightarrow {G}^{\mathbb{C}}$  to $\Lambda \tilde{G}^{\mathbb{C}}_{\sigma} $ we obtain as image  the connected component  $(\Lambda G_\sigma^\C)^0$ of $\Lambda G_{\sigma}^\C$ .
We thus obtain the desired Birkhoff decomposition by projecting the terms on the right side. But since the groups  $\Lambda^{+} \tilde{G}^{\mathbb{C}}_{\sigma}$
and $\Lambda^{-} \tilde{G}^{\mathbb{C}}_{\sigma}$ are connected their images under the projection are $\Lambda^{+}_\FC {G}^{\mathbb{C}}_{\sigma}$
and $\Lambda^{-}_\FC {G}^{\mathbb{C}}_{\sigma}$ respectively.
 From this the Birkhoff Decomposition Theorem for $(\Lambda G_\sigma^\C)^0$ follows. The special case of primary interest in this paper will be discussed in detail in the following remark.

\item  Let $J$ denote a nondegenerate quadratic form in $\R^{n+4}$
and  $SO(J,\C)$
the corresponding real special orthogonal group. Let $SO(J,\C)$ denote the
complexified special orthogonal group. Then $SO(J,\C)$ is connected and
has fundamental group $\pi_{1}(SO(J,\C))\cong \mathbb{Z}/2\mathbb{Z}$.
Moreover, if $\sigma$ is an involutive inner automorphism, we have
$\Lambda SO(J,\C)\cong \Lambda SO(J,\C)_{\sigma}$. Therefore
 \begin{equation}
\pi_0 ( \Lambda SO(J,\C)_{\sigma}) \cong\pi_0 ( \Lambda SO(J,\C))\cong \pi_{1}(SO(J,\C))\cong \mathbb{Z}/2\mathbb{Z}.
\end{equation}
The loop group $\Lambda SO^+(1,\tilde{n},\mathbb{C})_{\sigma}$ thus has two connected components.
Finally, choosing $\sigma, K$ and $K^\C$ in the Willmore setting, the group $K^\C$ has two connected components. Therefore also
$\Lambda^+ {G}^{\mathbb{C}}_{\sigma}$ and
$\Lambda^{-} {G}^{\mathbb{C}}_{\sigma}$ have two connected components.

\item Much of the above is contained in \cite{PS}, Section 8.5 (see also \cite{SW}). Note, however, that our real group $G=SO^+(1,n+3)$ is not compact.
\end{enumerate}
\end{remark}

\ \\
{\em Proof of Theorem \ref{thm-birkhoff-0}.}\
  Consider the universal cover $\pi: Spin(1,n+3, \mathbb{C})\rightarrow SO(1,n+3,\mathbb{C})$.
Then $\pi$ induces a homomorphism from  $\Lambda Spin(1,n+3, \mathbb{C})_\sigma$ to
$(\Lambda SO(1,n+3, \mathbb{C})_\sigma)^0$, the identity component of  $\Lambda SO(1,n+3, \mathbb{C})_\sigma$.
 Projecting the decomposition of  Theorem \ref{thm-birkhoff} with $\tilde G^\C = Spin(1,n+3,\C)$ to $ SO(1,n+3, \mathbb{C})$, we obtain the Birkhoff factorization Theorem \ref{thm-birkhoff-0}.
\hfill $\Box$


\subsection{Iwasawa Decomposition}

From here on we will write, for convenience, $\Lambda G^0_\sigma$ for
$(\Lambda G_\sigma)^0$.
For our geometric applications we also need a second loop group decomposition. Ideally we would like to be able to write
any $g\in (\Lambda G^\mathbb{C})^0_\sigma$ in the form $g=hv_+$ with  $h\in (\Lambda G)^0_\sigma$ and
 $ v_+ \in (\Lambda^+ G^\mathbb{C})^0_\sigma =
\Lambda^+_\mathcal{C} G^\mathbb{C}_\sigma$.  Unfortunately, this is not always possible.

For the discussion of this situation we start again by considering  the universal cover $ \tilde{\pi}^\C      :\tilde{G}^\C \rightarrow G^\C$. Then $\tau$, the anti-holomorphic involution of $G^\C$ defining $G$, and $\sigma$ have natural
lifts, denoted  by $\tilde{\tau}$ and $\tilde{\sigma}$, to $ \tilde{G}^\C$.
The  fixed point group
$\tilde{K}^\C$ of $\tilde{\sigma}$ is connected and projects onto $(K^\C)^0$.
The fixed point group of $\tilde{\tau}$ in $ \tilde{G}^\C$ is generally not  connected, like in the Willmore surface  case, where  the real elements ${Fix}^{\tau}(\tilde{G}^\C) = Spin(1,n+3)$  of $\tilde{G}^\C = Spin(1,n+3, \C)$ form a non-connected group.
But it suffices to consider its connected component
$ ( {Fix}^{\tau}(\tilde{G}^\C) )^0 =  \tilde{G}$ which projects onto $G$ under $\tilde{\pi}: \tilde{G} \rightarrow G$.

From here on we will write, for convenience, $\Lambda G^0_\sigma$ for
$(\Lambda G_\sigma)^0$. Then we trivially obtain the disjoint union
\begin{equation}\label{general-Iwasawa-univ}
\Lambda \tilde{G}^{\C}_{\sigma} =
\bigcup \Lambda \tilde{G}_{\sigma}\cdot \tilde{\delta} \cdot
\Lambda^{+} \tilde{G}^{\mathbb{C}}_{\sigma},
\end{equation}
where the $\tilde{\delta}$'s simply parametrize the different double cosets.
 Note that in this equation all groups are connected.
We can (and will) assume that $\tilde{\delta} = e$ occurs.
For the corresponding double coset, since the corresponding Lie algebras add to give the full loop algebra, we obtain:

\begin{theorem}
The multiplication $\Lambda \tilde{G}_{\sigma}\times \Lambda^{+} \tilde{G}^{\mathbb{C}}_{\sigma}\rightarrow
\Lambda \tilde{G}^{\mathbb{C}}_{\sigma}$ is a real analytic map onto the connected open subset
$ \Lambda \tilde{G}_{\sigma} \cdot \Lambda^{+} \tilde{G}^{\mathbb{C}}_{\sigma}      = \mathcal{I}^{\mathcal{U}}_e \subset\Lambda \tilde{G}^{\mathbb{C}}_{\sigma}$.

\end{theorem}

From this the Iwasawa Decomposition Theorem \ref{thm-Iwasawa-0} for $(\Lambda G^\C_\sigma)^0$ follows after an application of the natural projection as above.

\begin{remark}\
\begin{enumerate}
\item We have seen above that the Iwasawa cell with middle term $I$ is open.
But also the Iwasawa cell with middle term $\delta_0 = diag(-1,1,1,1,-1,...,1)$ is open. To verify this we consider $\delta_0^{-1} \Lambda so(1, n+3)_\sigma \delta_0
\oplus \Lambda^+so(1,n+3,\C)$ and observe that the first summand is equal to $\Lambda so(1, n+3)$. As a consequence,
\[\delta_0^{-1} \Lambda SO^+(1, n+3)^0_\sigma \delta_0
\cdot  \Lambda_\mathcal{C}^+SO(1,n+3,\C)_\sigma\] is open and the claim follows.

\item  Using work of Peter Kellersch \cite{Ke1} it seems to be possible to show that there are exactly two open Iwasawa cells in this case. We will not need such a statement in this paper.
\end{enumerate}
\end{remark}

\subsection{On a complementary  solvable  subgroup of  $ SO^+(1,3) \times SO(n)$ in $ SO(1,n+3,\C)$}

We consider $K^{\mathbb{C}}$, the connected complex subgroup of $SO(1,n+3,\C)$ with Lie algebra
 $\mathfrak{so}(1,3,\C)\times \mathfrak{so}(n,\C)$ considered as
Lie algebra of matrices acting on
$\mathbb{C}^4\oplus\mathbb{C}^{n}$. (Hence we consider the ``basic" representations
 of these Lie algebras as introduced in section 1.2.1 ). Clearly
 $K^{\mathbb{C}}=SO(1,3,\mathbb{C})\times SO(n,\mathbb{C}).$

\begin{theorem} There exist connected solvable subgroups $S_1 \subset SO^+(1,3,\C)$ and $S_2 \subset SO(n,\C)$ such that
 \begin{equation}
\left(SO^+(1,3) \times SO(n)\right) \times (S_1 \times S_2) \rightarrow
\left(SO^+(1,3) \cdot S_1\right) \times \left( SO(n) \cdot S_2\right)
\end{equation}
is a real analytic diffeomorphism onto an open subset of $K^\C$.
\end{theorem}
\begin{proof}
Since $SO(n)$ is a connected maximal compact subgroup of $SO(n,\mathbb{C})$,
 in $SO(n,\mathbb{C})$ we have the classical Iwasawa decomposition
 $SO(n,\mathbb{C})=SO(n)\cdot B,$  where $B$ is a solvable subgroup of $SO(n,\mathbb{C})$ satisfying
 $SO(n)\cap B=\{I\}$.

It thus suffices to consider $SO(1,3,\mathbb{C})$ and to prove the existence of a (connected solvable) subgroup $S_1$ of $SO(1,3,\C)$ such that
 \begin{equation}\label{eq-sos}
     \mathcal{S}: SO^+(1,3)\times S_1 \rightarrow SO^+(1,3)\cdot S_1
 \end{equation}
 is a real analytic diffeomorphism and
$
  SO^+(1,3)\cdot S_1
 $
  is open in $SO(1,3,\mathbb{C})$.
 Note, since the map $\mathcal{S}$ is clearly analytic and surjective, it suffices, as we will see below,  to verify that it is also open and that  $SO^+(1,3)\cap S_1=\{I\}$ holds.

 At any rate, we  need to find
a solvable Lie subalgebra $\mathfrak{s}_1$ of
$\mathfrak{so}(1,3,\mathbb{C})$
satisfying
\begin{equation} \mathfrak{so}(1,3) + \mathfrak{s}_1=\mathfrak{so}(1,3,\mathbb{C}),\ \
 \mathfrak{so}(1,3)\cap\mathfrak{s}_1=\{0\}.
\end{equation}

Set
\[\mathfrak{s}_1=\left\{\left.\left(
    \begin{array}{cccc}
      0 & i a_{12} & a_{13} & ia_{13} \\
      ia_{12} & 0 & a_{23} & ia_{23} \\
      a_{13} & -a_{23}  & 0 & ia_{34} \\
      i a_{13} & -ia_{23} & -ia_{34} & 0 \\
    \end{array}
  \right)\right|\ a_{12},a_{34}\in\R, a_{13}, a_{23}\in\C\right\}.
\]
We see that $\mathfrak{so}(1,3) \cap\mathfrak{s}_1=\{0\}$ and $\mathfrak{so}(1,3)^{\C}=\mathfrak{so}(1,3)\oplus\mathfrak{s}_1$ hold.
 It is straightforward to see  that $\mathfrak{s}_1$ is a solvable Lie algebra.
Let $S_1$ be the connected Lie subgroup of $SO(1,3,\C)$ with Lie algebra $Lie(S_1)=\mathfrak{s}_1$. So we have that the map $ \mathcal{S}$ is  a local  diffeomorphism near the identity element  by  Chapter II, Lemma 2.4 of \cite{Helgason}. This also implies that the map $\mathcal{S}$ is open.

 Next we finally show that  $SO^+(1,3)\cap S_1=I$ holds.
 We recall  that the exponential map $\exp:\mathfrak{so}(1,3)\rightarrow SO^+(1,3)$ is surjective.  Then every element of $SO^+(1,3)\cdot S_1$ has the form
$\exp( \mathfrak{A}) \exp( \mathfrak{B})\exp(\mathfrak{C}),$ with $\mathfrak{A}\in\mathfrak{so}(1,3)$,
  $\mathfrak{B}$ contained in the abelian subalgebra of  the $2\times2-$ diagonal blocks in $\mathfrak{s}_1 $, and
 $\mathfrak{C}$  in the nilpotent subalgebra of $\mathfrak{s}_1 $  consisting of the  ``off-diagonal''  blocks
(Note that for every  off-diagonal block $Q$ in $\mathfrak{s}_1$ we have $Q^2 = 0$).   Let  $\exp( \mathfrak{A}) \exp( \mathfrak{B})\exp(\mathfrak{C})\in SO^+(1,3)\cap S_1$.  Then $ \exp( \mathfrak{B})\exp(\mathfrak{C})= \exp(\overline{ \mathfrak{B}})\exp(\overline{\mathfrak{C}})$ and
  \[\exp(\overline{ \mathfrak{B}})^{-1}\exp( \mathfrak{B})= \exp(\overline{\mathfrak{C}})\exp(\mathfrak{C})^{-1}\]
follows.
  As a consequence, $\exp(\overline{ \mathfrak{B}})^{-1}\exp( \mathfrak{B})= \exp(\overline{\mathfrak{C}})\exp(\mathfrak{C})^{-1}=I_4$, i.e.,
  $\exp( \mathfrak{B})=\exp(\overline{ \mathfrak{B}})$ and $\exp(\mathfrak{C})=\exp(\overline{\mathfrak{C}})$.  The definition of    $ \mathfrak{s}_1 $ now implies  $\exp(\mathfrak{B})=\exp(\mathfrak{C})=I_4$.

 To see that the inverse map is real analytic we take a small neighbourhood in  $SO^+(1,3) \cdot  S_1$ of the form $gVs$, where $V$ is a small neighbourhood of the identity $I$. Since locally near $I$ our map is a real analytic diffeomorphism, the claim follows.
\end{proof}

\begin{remark}
We point out that the set $SO^+(1,3)S_1$ is not all of
$SO(1,3,\C)$. For example
\[\left(
               \begin{array}{cccc}
                 \frac{\sqrt{2}}{2} & 0 & \frac{i\sqrt{2}}{2} & 0 \\
               0  & 1 & 0 & 0 \\
                 \frac{i\sqrt{2}}{2} & 0 &\frac{\sqrt{2}}{2} & 0 \\
                0 & 0 & 0 & 1 \\
               \end{array}
             \right)
\]
 is an element of $SO^+(1,3,\C)$ which is not contained in $ SO^+(1,3)S_1$.
\end{remark}


\section{Appendix B: Proof of Theorem \ref{normalizationlemma} }

Let $U$ be a contractible open subset of some Riemann surface $M$. Then the Maurer-Cartan form of any strongly conformally harmonic map
$f: M \rightarrow G/K$ is  real analytic on $U$.
Moreover, the matrix $B_1$ in \eqref{eq-B0} satisfies $B_1^t I_{1,3} B_1 = 0$
 by Definition \ref{stronglyconfharm}.  In particular, the columns of $B_1$ are orthogonal  complex null vectors relative to the quadratic form
defined by $I_{1,3}$. Our goal is to find a simple canonical form of $B_1.$

The first case to consider is, where $B_1$ consists of one column.
It is easy to verify that every non-vanishing fixed complex null vector $b \in \C^4$ can be mapped by $SO^+(1,3)$ into the space
\[\mathcal{N} = \C (1,-1,0,0)^t\ \hbox{ or into }\ \mathcal{N}_{\pm}= \C (0,0,1, \pm i)^t\] according to whether the real part of $b$ is lightlike (possibly including $0$ ) or spacelike respectively.  For a real analytic complex valued null vector function $b$ it is not possible, in general, to map $b$ by some real analytic matrix function $A \in SO^+(1,3)$ into one of these spaces only.
But if $B_1 = b$ corresponds to a non-trivial strongly conformally harmonic map, then one can map $b$ into the sum $\mathcal{N} \oplus  \mathcal{N}_+$ or  into $\mathcal{N} \oplus  \mathcal{N}_-$.

We start by proving the desired result in the case, where $b$ never vanishes on $U$.

\begin{lemma}\label{lemma-null}
Let $U$ be a contractible open subset of $\C$ and $b: U \rightarrow \C^4_1\backslash\{0\}$
a real analytic null vector.
Then there exists a real analytic map $A:U \rightarrow SO^+(1,3)$ such that
the function $Ab$ is contained in $\mathcal{N} \oplus  \mathcal{N}_+$, i.e. $Ab$ has the form $(p, -p, q,  i  q)^t$.
\end{lemma}
\begin{proof}
The proof is particularly easy if one realizes $\C^4 \cong Mat(2, \C)$ with quadratic form
\[\langle X,X^\prime \rangle =
 X_{11} X^\prime_{22}  - X_{12} X^\prime_{21} + X_{22} X^\prime_{11}  - X_{21} X^\prime_{12} .\]
In this realization the non-vanishing complex null vectors are exactly all matrices of rank 1, i.e. all non-vanishing matrices of determinant $0$.

As real form we choose $\R^4_1 \cong Herm(2,\C)$, the space of $2 \times 2-$ complex hermitian matrices.

The group $SO(4,\C) \cong (SL(2, \C) \times SL(2,\C) ) / \{ \pm I \}$ acts on $Mat(2,\C)$ by $(g,h).X = gXh^{-1}$. Then $SO^+(1,3) \cong SL(2,\C) / \{\pm I\}$ acts by $g.X = gX \bar{g}^t$.

In the spirit of what was said before the statement of the lemma, we want to transform  any real analytic map $X$ defined in $U$ with values in $Mat(2,\C)$
 into the complex space $\C E_{11} \oplus \C E_{21}$ by the operation
$X \rightarrow gX \bar{g}^t$, where $g \in SL(2, \C)$ is defined on $U$ and real analytic.

Now it is an easy exercise to verify that for any $z_0 \in U$ there is a matrix function, $q_\delta$, defined on some open neighbourhood $U_\delta \subset U$ of $z_0$ such that $X \bar{g}_\delta^t$ has  on $U_\delta$  an identically vanishing second column, if $det(X) \equiv 0$ on $U$.  Of course, then also
$g_\delta X \bar{g}_\delta^t$ has identically vanishing second column.

Next we consider $h_{\alpha \beta} = q_\alpha q_\beta^{-1}$. These matrix functions are defined on $U_\alpha \cap U_\beta$  and form a cocycle  relative to the covering given by the $U_\delta$. Moreover, this cocycle consists of upper triangular matrices of determinant $1$. Therefore, since $U$ is contractible, this cocycle is a co-boundary. Therefore  there exist upper triangular matrices $h_\delta$  of determinant $1$  and  defined on $U_\delta$ satisfying $ q_\alpha q_\beta^{-1}= h_\alpha h_\beta^{-1}$.
As a consequence  $g = h_\alpha^{-1} q_\alpha$ is defined on $U$ and
 the  second column  of  $gX\bar{g}^t$ vanishes identically on $U$.
\end{proof}

Now it is fairly straightforward to prove Theorem 3.4.
If the maximal rank of $B_1$ is $1$, then the argument would be easy, if all columns of $B_1$  would be real analytic multiples of one of the columns, say, the first column of  $B_1$. The actual argument follows in a sense the same idea, but is a bit more sophisticated. If the maximal rank of $B_1$ is $2$, then in the complex vector space spanned by  two generically linearly independent columns of $B_1$ one constructs a real vector which then implies quite directly  what we want in view of the condition $B_1^t I_{1,3} B_1 = 0$.\\

{\em Proof of Theorem \ref{normalizationlemma}:}

First we mention some result which is true for all harmonic maps into a symmetric space, namely that any such harmonic map can be constructed by the loop group method from ``holomorphic potentials''. The proof is as in \cite{DPW} and is not related in any way to the specific properties of conformally harmonic maps which we investigate.

Therefore, let's consider the holomorphic potential of the harmonic map $f$ (for a discussion we refer to  Section 4.3).
 By Theorem \ref{Holo-Willmore}, let
\begin{equation}\label{eq-potential-h}
\xi=(\lambda^{-1}\xi_{-1}+\sum_{j\geq0}\lambda^j\xi_j)dz,\  \hbox{ with }\ \xi_{-1}=\left(
    \begin{array}{cc}
      0 & R_1 \\
      -R_1^tI_{1,3} & 0 \\
    \end{array}
  \right),
\end{equation}
be the corresponding holomorphic potential on $U$. Then there exist some real analytic matrices $S_1 \in SO^+(1,3,\C)$ and $S_2\in SO(n,\C)$, such that $B_1=S_1R_1S_2$  holds.

Our claim is equivalent to that there exists some real analytic matrix function $A:U\rightarrow SO^+(1,3)$ such that $AB_1$ has the form desired.

It is easy to see that it suffices to prove this special form for $Q_1 = S_1 R_1$.
Let's write $Q_1$ as a matrix of column vectors, $Q_1 = (q_1,...,q_n).$ Since we assume w.l.g. $B_1\neq 0,$ also $Q_1 \neq 0.$  Hence one of the columns of $Q_1$ does not vanish. Let's assume w.l.g. that the first column $q_1$ of $Q_1$ does not vanish identically. Then the corresponding first column $r_1$ of $R_1$ does not vanish identically. Since $r_1$ is holomorphic, one can factor out some holomorphic (product) function $h_1$ such that $r_1 = h_1 \hat{r}_1$, where $\hat{r}_1$ is holomorphic and never vanishes on $U$.
As a consequence, $q_1 = h_1 \hat{q}_1$, where the globally defined and real analytic map $\hat{q}_1$ never vanishes.

 From Lemma \ref{lemma-null} we obtain now that there exists some real analytic matrix function $A:U\rightarrow SO^+(1,3)$ such that $A \hat{q}_1$ has the desired form
\[A \hat{q}_1=aE_{11}+bE_{21}.\]
Hence also $ A q_1 = h_1  A\hat{q}_1$ has the desired form.\vspace{2mm}

 Let's assume next that $B_1$ has maximal rank $1$. Then  we claim that each column of $Q_1$ is a multiple of $\hat{q}_1$ and
 this multiple is holomorphic on $U$.
As a consequence, $AB_1$ has the desired form.

To prove the claim above, note that the relation between $A S_1 r_1$ and $A S_1 r_j$ can already be found between $r_1$ and $r_j$. By the argument above we can write
$r_1 = h_1 \hat{r}_1$ and $r_j = h_j \hat{r}_j$ with $\hat{r_1}$ and $\hat{r}_j$
never vanishing on $U$. Let $U'$ denote the discrete subset of points in $U$, where none of the occurring, not identically vanishing functions/vector entries, vanish. On this set one can show that an entry of $\hat{r}_1$ does not vanish identically if and only if the corresponding entry of $\hat{r}_j$ does not vanish identically.
Now it is easy to verify that $\hat{r}_j$ is a holomorphic multiple of $\hat{r}_1$. Whence the statement above. \vspace{2mm}

Next let's assume that the maximal rank of $B_1$ is $2$.
In this case we apply the argument given above for $q_1$  to each column of $B_1$, i.e. we write $q_j = h_j \hat{q}_j$, where $\hat{q}_j$ never vanishes on $U$. Note, the case $q_j \equiv 0$ corresponds to $h_j \equiv 0$ and $\hat{q}_j = const \neq 0$.
 We will also assume w.l.g. that the second column of $B_1$ does not vanish identically. Hence $\hat{q}_1$ and $\hat{q}_2$  never vanish  on $U$ and are linearly independent on an open and dense subset $\tilde{U}$ of $U$.

 For the following argument we realize again $\C^4 $ by $Mat(2,\C)$. As before we can apply the theorem above to $\hat{q}_1$ and can assume w.l.g. that $\hat{q}_1$ is a $2 \times 2-$matrix for which the second column is $0$. We will use the notation $\hat{q}_1 = aE_{11} + bE_{21}$ and note that by assumption $|a|^2 + |b| ^2 $ never vanishes on $U$.

  If $ab\equiv0$, then $a\equiv 0$ or $b\equiv0$ on $U$.
The nilpotency condition  $L^t I_{1,3} L=0$ for
$Q_1 = S_1 R_1$ implies that the claim of the theorem holds, after one more (constant) gauging if necessary.

 If $ab\neq 0$, then after applying a constant
$SL(2,\C )-$matrix, if necessary, we can assume w.l.g. that $a \neq 0$ and $b \neq 0$ on the open and dense subset $\tilde{U}$ of $U$.

 In this case, using that $\hat{q}_1$ and $\hat{q}_2$ are perpendicular to each other and to themselves,  it is straightforward to see by a computation on $\tilde{U}$ that $\hat{q}_2$
is  either of the form
$  \hat{a} E_{11}+\hat{b} E_{21} $,  or of the form \[ \hat{a} (aE_{11}+bE_{21})+\hat{b}( aE_{12}+bE_{22}) \hbox{ with } \hat{b} \neq0,\] where the coefficient functions are real analytic on  $\tilde{U}$.
For the first case, we are done, since the coefficients clearly extend to functions defined on $U$.

 In the second case one can show by a simple computation that the complex vector space spanned by $\hat{q}_1$ and $\hat{q}_2$ contains the hermitian matrix
\[ w_1 = |a|^2 E_{11} + \bar{a} b E_{21} + a \bar{b} E_{12} |b|^2.\]
Clearly, this matrix is defined on all of $U$. Moreover, the $SL(2, \C )-$matrix
$g = c_0 (\bar{b} E_{11} - a E_{12} + \bar{a} E_{21} + b E_{22} )$, with $c_0 = 1/{\sqrt{|a|^2 + |b|^2}}$ is a real analytic function on $U$ which transforms
$w_1$ into the matrix  $w_2 = (|a|^2 + |b|^2) E_{11}.$ As a consequence, after this transformation the complex vector space spanned by $q_1$ and $q_2$ contains the constant matrix function $q_0 = E_{11}$.

 By the construction carried out so far, the vectors $q_0, q_1,...$ all are perpendicular to each other and to themselves. In particular, $\langle q_0,q_j\rangle = 0$ and  $\langle q_j,q_j\rangle =0$  for $j>0$ implies by a straightforward computation that each of the matrices $q_j, j>0,$ has a vanishing second column or a vanishing second row. But the relation $\langle q_1, q_k\rangle =0, k>1,$ implies that all $q_k$ have the same type as $q_1$. Hence all
$q_j, j \geq 1,$ are   contained in either $\mathcal{N} \oplus  \mathcal{N}_+$  or $\mathcal{N} \oplus  \mathcal{N}_-$.

   \hfill   $\Box$

\begin{corollary} \label{corollary-B1-vanish}
Let $h_0$ denote the greatest common divisor of the holomorphic
functions
$h_j, j = 1, \dots, n,$ defined in the proof above, then $B_1 = h_0 \hat{B}_1$ and  $\hat{B}_1(z) \neq 0$ for all $z \in U$.
\end{corollary}

{\small{\bf Acknowledgements}\ \
This work was started when the second named author visited the Department of Mathematics of Technische Universit\"{a}t  M\"{u}nchen, and  the Department of Mathematics of Tuebingen University. He would like to express his sincere gratitude for both the hospitality and financial support. The second named author is thankful to Professor Changping Wang and Xiang Ma for their suggestions and encouragement. The second named author is also thankful to the ERASMUS MUNDUS TANDEM Project for the financial supports to visit the TU M\"{u}nchen. This work was partly supported by the Project 11201340 and 11571255 of NSFC.}

{\footnotesize

\def\refname{Reference}

}
\vspace{2mm}
{\small
Josef Dorfmeister

Fakult\" at f\" ur Mathematik,

TU-M\" unchen, Boltzmann str. 3,

D-85747, Garching, Germany

{\em E-mail address}: dorfm@ma.tum.de\\

Peng Wang

Department of Mathematics

Tongji University, Siping Road 1239

Shanghai, 200092, P. R. China

{\em E-mail address}: {netwangpeng@tongji.edu.cn}
}
\end{document}